\newcommand{\qed}{\sqcap\kern-6.8pt\sqcup}
\newenvironment{proof}
      {\par\noindent{\it Proof\/: }\nopagebreak\normalsize}%
                                  {\linebreak[2]\hspace*{\fill}$\qed$\ifdim\lastskip<10pt
       \removelastskip \penalty-200  \vskip10pt  \fi}
\font\frten=eufm10 at 10pt
\font\freight=eufm10
\font\frsix=eufm8
\newcommand{\CC}{\mathbb{C}}
\newcommand{\PP}{\mathbb{P}}
\newcommand{\RR}{\mathbb{R}}
\newcommand{\NN}{\mathbb{N}}
\newcommand{\ZZ}{\mathbb{Z}}
\newcommand{\QQ}{\mathbb{Q}}
\newtheorem{thm}{Theorem}[section]
\newtheorem{prop}[thm]{Proposition}
\newtheorem{cor}[thm]{Corollary}
\newtheorem{lem}[thm]{Lemma}
\newtheorem{defn}[thm]{Definition}
\newtheorem{rema}[thm]{Remark}
\def \dim{{\rm dim}}
\def \L{{\cal O}}
\def \Div{{\rm Div}}
\def \Gal{{\rm Gal}}
\def \deg{{\rm deg}}
\def \ind{{\rm ind}}
\def \coind{{\rm coind}}
\begin{document}

\title{Very special divisors on $4$-gonal real algebraic curves}

\author{Jean-Philippe Monnier\\
       {\small D\'epartement de Math\'ematiques, Universit\'e d'Angers,}\\
{\small 2, Bd. Lavoisier, 49045 Angers cedex 01, France}\\
{\small e-mail: monnier@tonton.univ-angers.fr}}
\date{}
\maketitle
{\small\bf Mathematics subject classification (2000)}{\small : 14C20, 14H51, 
14P25, 14P99}

\begin{abstract}
{Given a real curve, we study special linear 
systems called ``very special''
for which the dimension does not satisfy a
 Clifford type inequality. We classify all these very special
 linear systems when the gonality of the curve is small.}
\end{abstract}

\section{Introduction and preliminaries}

In this note, a real algebraic curve $X$ is a smooth proper geometrically
integral scheme over $\RR$ of dimension $1$.
A closed point $P$ of $X$ 
will be called a real point if the residue field at $P$ is $\RR$, and 
a non-real point if the residue field at $P$ is $\CC$.
The set of real points $X(\RR)$ of $X$ decomposes into 
finitely many connected components, whose number will be denoted by $s$.
By Harnack's Theorem (\cite[Th. 11.6.2 p. 245]{BCR}) we know that $s\leq g+1$,
where $g$ is the genus of $X$. 
A curve with $g+1-k$ real connected components
is called an $(M-k)$-curve. Another topological invariant associated to
$X$ is $a(X)$, the number of connected components of $X(\CC)\setminus X(\RR)$
counted modulo $2$. The pair $(s,a(X))$ is called the topological type
of $X$. If $a(X)=0$ then $s=g+1\,\mod 2$ (see \cite{K}) and $X$ is
called a separating curve.

We will denote by $X_{\CC}$ the base extension of $X$
to $\CC$. 
The group $\Div(X_{\CC})$ of divisors on $X_{\CC}$
is the free abelian group on the 
closed points of $X_{\CC}$. The Galois group $\Gal(\CC /\RR)$
acts on the complex variety $X_{\CC}$ and also on $\Div(X_{\CC})$. 
We will always indicate this action by a bar. Identifying $\Div(X)$
and $\Div(X_{\CC})^{\Gal(\CC /\RR)}$,
if $P$ is a non-real point of $X$ then 
$P=Q+\bar{Q}$ 
with $Q$ a closed point of $X_{\CC}$.
The group $\Div(X)$ of divisors on $X$ 
is then the free abelian group generated by the 
closed points of $X$. If $D$ is a divisor on $X$, we will denote by
$\L (D)$
its associated invertible sheaf. The dimension of the space 
of global sections of this sheaf will be denoted
by $h^0 (D)$. 
Since a principal divisor has an even degree on each connected 
component of $X(\RR)$ (e.g. \cite{G-H} Lem. 4.1),
the number $\delta(D)$ (resp. $\beta(D)$) of connected components $C$
of $X(\RR )$
such that the degree of the restriction of $D$ to $C$ is odd (resp even) 
is an invariant of the linear system $|D|$
associated to $D$. If $h^0 (D)>0$, the dimension of the linear system
$|D|$ is $\dim\, |D|=h^0 (D)-1$.
Let $K$ be the canonical divisor. If 
$h^0 (K-D)=\dim\, H^1 (X,\L (D) )>0$, $D$ is said to be special. If not,
$D$ is said to be non-special. By Riemann-Roch, if $\deg(D)>2g-2$ then $D$
is non-special. Assume $D$ is effective of degree $d$. If
$D$ is non-special then the dimension of the linear system $|D|$ is given by 
Riemann-Roch. If $D$ is special, then the dimension of the linear system
$|D|$ satisfies 
$$\dim\, |D|\leq \frac{1}{2} d.$$
This is the well known Clifford inequality for complex curves that
works also for real curves. The reader is referred to \cite{ACGH} and
\cite{Ha} for more details on special divisors. Concerning real curves,
the reader may consult \cite{G-H}.

Huisman (\cite[Th. 3.2]{Hu})
has shown that: 
\begin{thm} \label{Huisman}
Assume $X$ is an $M$-curve (i.e. $s=g+1$) or an $(M-1)$-curve (i.e. $s=g$).
Let $D\in \Div(X)$ be an effective and special divisor of degree $d$.
Then $$\dim\, |D|\leq \frac{1}{2} (d-\delta (D)).$$
\end{thm}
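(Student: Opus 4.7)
The plan is to reduce the desired inequality to the classical Clifford inequality by peeling off $\delta(D)$ carefully chosen real base points of $|D|$, one on each connected component of $X(\RR)$ where $D$ has odd degree.

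I would argue by induction on $\delta := \delta(D)$. When $\delta = 0$, the bound asserts exactly $\dim |D| \leq d/2$, which is Clifford's inequality applied to $X_{\CC}$ and requires no hypothesis on the number of real components. For the inductive step, suppose $\delta \geq 1$ and fix a connected component $C$ of $X(\RR)$ on which $\deg(D|_C)$ is odd. The key claim---where the $M$ or $(M-1)$ hypothesis enters---is that $|D|$ admits a real base point $P$ lying on $C$.

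Granting the claim, set $D' := D - P$. Then $D'$ is effective (as $P$ appears in every member of $|D|$, in particular in the representative $D$ itself), has degree $d-1$, and remains special because $K - D' = (K-D) + P$ is effective. The parity of the degree on $C$ flips from odd to even while the parities on all other real components are unchanged, so $\delta(D') = \delta - 1$. Since $P$ is a base point, $\dim |D'| = \dim |D|$, and the inductive hypothesis yields
$$\dim |D| = \dim |D'| \leq \frac{1}{2}\bigl((d-1) - (\delta - 1)\bigr) = \frac{1}{2}(d - \delta),$$
as required.

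The main obstacle is clearly the existence of the real base point $P \in C$. The natural starting observation is that any real effective member $E \in |D|$ intersects $C(\RR)$ in an odd---hence positive---number of real points counted with multiplicity, because the restriction of $E$ to $C$ is a real $0$-cycle whose total degree is congruent to $\deg(D|_C) \equiv 1 \pmod 2$. On a general real curve these ``odd points'' need not coincide across members of $|D|$, but the $M$ or $(M-1)$ hypothesis is expected to force such a coincidence via strong Harnack-type rigidity: $X(\RR)$ has the maximal or near-maximal number of components allowed by the genus, and the real linear system $|D|(\RR)$ has full real dimension $\dim|D|$. Pushing this through---for instance by analyzing the restriction morphism from $|D|(\RR)$ to the appropriate symmetric product of $C$ and counting dimensions against the parity constraint on each component---is the technical heart of the argument, and is where I would expect to invest most of the effort.
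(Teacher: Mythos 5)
This statement is quoted in the paper as Huisman's theorem (\cite[Th.\ 3.2]{Hu}); the paper itself gives no proof, so there is nothing internal to compare against. Judged on its own terms, your proposal has a genuine gap at exactly the point you flag as ``the technical heart'': the key claim is not merely unproven, it is false. You assert that if $\delta(D)\geq 1$ then $|D|$ must have a real base point on a component $C$ where $\deg(D|_C)$ is odd. Counterexample: let $X$ be a non-hyperelliptic $M$-curve of genus $3$ (a plane quartic with $4$ ovals) and $P$ a real point; then $D=K-P$ is effective, special, and base point free (a base point $Q$ would force $h^0(P+Q)=2$, i.e.\ $X$ hyperelliptic), yet $\delta(D)=\delta(P)=1$ since $\delta(K)=\delta(0)=0$. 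The same phenomenon occurs for any base point free $g_3^1$ on a trigonal $M$-curve. For such $D$ your induction cannot even start its first step, and no amount of ``Harnack-type rigidity'' can rescue the claim, because the conclusion of the theorem ($\dim|D|\leq\frac{1}{2}(d-\delta)$) is compatible with equality for base point free systems --- indeed in the example above $\dim|D|=1=\frac{1}{2}(3-1)$, so the inequality is sharp without any base point being available to peel off.

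More structurally, the strategy of reducing to the complex Clifford inequality by subtracting $\delta(D)$ real points cannot work as stated: what one can do (and what the paper does in Proposition \ref{construction}) is subtract \emph{general} real points and control how the dimension drops, not find base points. Huisman's actual argument is global rather than inductive on base points; it exploits the structure of the real part of the Picard variety of an $M$- or $(M-1)$-curve (the connected components of $\Pic^d(X)(\RR)$ are indexed by the partition of degree among the ovals modulo principal divisors, and maximality of $s$ forces a dimension count on the fibres of $X^{(d)}(\RR)\to\Pic^d(X)(\RR)$ over the component determined by $\delta(D)$). If you want to salvage an elementary approach, the correct move is the one in Proposition \ref{construction}: subtract one general point from each odd component and show the dimension drops by the right amount; but making that count give the full strength of Huisman's bound again requires the maximality hypothesis in an essential, non-local way.
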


Huisman inequality
is not valid for all real curves and we have the
following theorem.
\begin{thm} \cite[Th. A]{Mo1}
\label{cliffreelA}
Let $D$ be an effective and special divisor of degree $d$. Then either
$$\dim\, |D|\leq \frac{1}{2} (d-\delta (D))\eqno{\rm (Clif 1)}$$
or 
$$\dim\, |D|\leq \frac{1}{2} (d-\beta (D))\eqno{\rm (Clif 2)}$$
Moreover, $D$ satisfies the first inequality
if either $s\leq 1$ or $s\geq g$.
\end{thm}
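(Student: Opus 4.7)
The plan is to combine Huisman's theorem (Theorem \ref{Huisman}) with point-subtraction arguments, after observing that the disjunction (Clif 1) or (Clif 2) is equivalent to the single inequality $2\dim|D| \leq d - \min(\delta(D), \beta(D))$.

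For the ``moreover'' clause: when $s \geq g$ the curve is $M$ or $(M-1)$ and Theorem \ref{Huisman} applies directly to give (Clif 1). When $s = 0$ we have $\delta(D) = 0$, so (Clif 1) is classical Clifford. The residual case $s = 1$ with $\delta(D) = 1$ forces $d$ odd; I would follow Huisman's strategy by first removing any real base points of $|D|$ (which only strengthens the bound) and then analyzing the morphism $\phi_{|D|}$ on the unique real component, whose image is a homotopically non-trivial loop in $\PP^{\dim|D|}(\RR)$, producing the gain needed over classical Clifford.

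For the main statement I would induct on $d = \deg D$, with base case $d = 0$ trivial. Assume without loss of generality $\delta \leq \beta$. If $|D|$ has a real base point $P$ (on any real component), pass to $D - P$ of degree $d - 1$; a short bookkeeping check on how $(\delta, \beta)$ changes shows that either branch of the inductive hypothesis applied to $D - P$ yields a bound at least as strong as $2\dim|D| \leq d - \min(\delta, \beta)$ for $D$. Otherwise, choose real points $P, P'$ lying on the same real component $C$: the divisor $D - P - P'$ has degree $d - 2$ and the same $(\delta, \beta)$ as $D$, so the inductive hypothesis applied to $D - P - P'$ together with the ``dependent conditions'' estimate $\dim|D - P - P'| \geq \dim|D| - 1$ would close the argument. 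The main obstacle is securing such a dependent pair $(P, P')$ on some real component. I would seek it via a Huisman-style topological argument applied to $\phi_{|D|}$: the image loop of each odd real component in $\PP^{\dim|D|}(\RR)$ is homotopically non-trivial, which should force two real points on it to impose dependent conditions on $|D|$. When no odd component affords such a pair (for instance if $D$ has degree $1$ on every odd component), one would run the analogous analysis on an even real component, possibly producing (Clif 2) rather than (Clif 1). The delicate step is the case analysis showing that at least one of these two constructions always closes; this is exactly where the restrictions $s \leq 1$ or $s \geq g$ of the moreover clause become structurally relevant.
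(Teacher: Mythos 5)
First, a point of comparison: the paper does not actually prove Theorem \ref{cliffreelA}; it is imported verbatim from \cite[Th. A]{Mo1}, so there is no internal proof to measure you against. On its own terms, your proposal gets the easy parts right: the disjunction is indeed equivalent to $2\dim|D|\le d-\min(\delta(D),\beta(D))$; the base-point-removal bookkeeping does preserve that inequality under the induction; $s\ge g$ follows from Theorem \ref{Huisman}; and $s=0$ is classical Clifford. For $s=1$ you already hold the key but do not use it: since a principal divisor has even degree on each oval, $d\equiv\delta(D)\pmod 2$, so $\delta(D)=1$ forces $d$ odd and classical Clifford $2r\le d$ immediately gives $2r\le d-1$; the topological analysis of $\varphi_{|D|}$ you sketch there is unnecessary.

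The genuine gap is the core of the inductive step when $|D|$ is base point free. For a base point free $|D|$ and distinct real points $P,P'$, one has $\dim|D-P-P'|\ge\dim|D|-1$ if and only if every hyperplane through $\varphi_{|D|}(P)$ passes through $\varphi_{|D|}(P')$, i.e.\ if and only if $\varphi_{|D|}(P)=\varphi_{|D|}(P')$. Homotopic non-triviality of the image of an odd component does not force such a coincidence: a real projective line, or any oval of a very ample embedding, is a homotopically non-trivial embedded loop on which every pair of distinct points imposes independent conditions. Very ample special divisors are ubiquitous (the paper itself produces very ample very special systems in Theorem \ref{deltag41=4}), and nothing prevents one from having $\delta(D)>0$ and $\beta(D)>0$ simultaneously; for $\min(\delta,\beta)\ge 3$ the parity fallback $2r\le d-1$ is also insufficient, so your induction has no legal move precisely where new input is required. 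A second structural misreading compounds this: you suggest that the hypotheses $s\le 1$ or $s\ge g$ ``become structurally relevant'' in closing the main case analysis, but the disjunction (Clif 1)-or-(Clif 2) is asserted for \emph{every} real curve; those hypotheses only decide \emph{which} inequality holds. The descent that actually works (and is reproduced in this paper as Proposition \ref{construction} and Lemma \ref{condition1}) subtracts \emph{general} points --- one from each even component, pairs from odd components --- and controls how much the dimension can drop, rather than hunting for dependent pairs on a single oval; combined with $d\equiv\delta(D)\pmod 2$ and a separate analysis of the Clifford-equality (hyperelliptic) case, this is the kind of mechanism your argument is missing.
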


In this note we are interested in special divisors that do not satisfy
the inequality ${\rm (Clif 1)}$ given by Huisman.
\begin{defn} Let $D$ be an effective and special divisor of degree
  $d$. We say that $D$ is a very special divisor (or $|D|$ is a very
  special linear system)
  if $D$ does not
  satisfy the inequality (Clif 1) i.e. $\dim\, |D|>\frac{1}{2}
  (d-\delta (D))$. If $D$ is very special then there exists $k\in\NN$
  such that $$\dim\, |D|= \frac{1}{2} (d-\delta (D))+k+1$$ and $k$ is
  called the index of $D$ denoted by $\ind(D)$.
\end{defn}

We can reformulate Theorem \ref{cliffreelA} with the concept of very
special divisors.
\begin{thm} \cite[Th. B]{Mo1}
\label{cliffreelB}
Let $D$ be an effective and very special divisor of degree $d$.
Then $$\dim\, |D|\leq \frac{1}{2} (d-\frac{1}{2}(s-2))\eqno{(*)}.$$
\end{thm}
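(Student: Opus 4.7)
The plan is to apply Theorem \ref{cliffreelA} to $D$ and then sharpen the resulting bound by exploiting the index structure of very special divisors together with a duality between $D$ and the residual divisor $K-D$. Since $D$ is very special, inequality (Clif 1) fails, so by Theorem \ref{cliffreelA} the inequality (Clif 2) must hold: $2\dim|D|\leq d-\beta(D)$. Writing $\dim|D|=\frac{1}{2}(d-\delta(D))+k+1$ with $k\geq 0$ the index and using $\delta(D)+\beta(D)=s$, (Clif 2) translates to $\delta(D)\geq s/2+k+1$, whereas the target $(*)$ is equivalent to the stronger $\delta(D)\geq s/2+2k+1$.

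For $k=0$ the two bounds coincide, and a parity check suffices: since principal divisors have even degree on each real component, $d\equiv\delta(D)\pmod 2$, and when $s$ is odd the integrality of $\delta(D)$ forces an extra half-unit of rounding that closes the case. For $k\geq 1$ I would first observe that $K-D$ is effective, special and very special of the same index $k$. Indeed, each real connected component of $X(\mathbb{R})$ is a circle with trivial tangent bundle, so $K$ has even degree on every such component; consequently $\delta(K-D)=\delta(D)$, $\beta(K-D)=\beta(D)$, and a direct Riemann--Roch computation matches the indices. This duality makes any inequality of Theorem \ref{cliffreelA} applied to $K-D$ available for $D$ as well.

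I would then induct on $k$. Pick a real point $P$ on a real component on which $D$ has odd degree (possible since $\delta(D)\geq s/2+k+1\geq 2$ whenever $D$ is very special). For $P$ generic (not a base point of $|D|$) one has $\dim|D-P|=\dim|D|-1$ and $\delta(D-P)=\delta(D)-1$; a short verification then shows that $D-P$ is still effective, special, and very special, now of index $k-1$ (its degree drops by $1$ and $(d'-\delta')/2=(d-\delta)/2$ is unchanged). The inductive hypothesis applied to $D-P$ yields $\delta(D-P)\geq s/2+2(k-1)+1$, and hence $\delta(D)\geq s/2+2k$.

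The main obstacle is the stubborn one-unit gap between the inductive output $\delta(D)\geq s/2+2k$ and the required $\delta(D)\geq s/2+2k+1$. I expect to close it by combining the parity and integrality constraints on $d$, $\delta(D)$ and $s$, which absorb a half-unit of slack in the favourable parities, with the symmetric information coming from $K-D$, which should promote one of the intermediate weak inequalities to a strict one. Handling this bookkeeping uniformly across all parities of $s$ and $d$ is the technical core of the argument, as carried out in \cite{Mo1}.
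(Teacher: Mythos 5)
First, note that the paper itself gives no proof of this statement: it is quoted from \cite[Th.\ B]{Mo1} and used as a black box, so there is no in-paper argument to compare yours with and your attempt has to stand on its own. Your reduction is correct as far as it goes: writing $\dim |D|=\frac{1}{2}(d-\delta(D))+k+1$ with $k=\ind(D)$, the conclusion $(*)$ is equivalent to $\delta(D)\geq \frac{s}{2}+2k+1$, while (Clif 2) --- which does hold by Theorem \ref{cliffreelA} since (Clif 1) fails for a very special divisor --- yields only $\delta(D)\geq \frac{s}{2}+k+1$. This settles the case $k=0$, but for $k\geq 1$ you must gain an extra $k$ units, and that is exactly where the proposal breaks down.

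The induction you set up cannot close. Subtracting a generic real point $P$ on an odd component does turn a very special $D$ of index $k$ into a very special $D-P$ of index $k-1$ with $\delta(D-P)=\delta(D)-1$; but granting the inductive hypothesis at full strength for $k-1$ you only reach $\delta(D)\geq \frac{s}{2}+2k$, one short of the claim for index $k$. Since the inductive step never establishes the statement it consumes, the bound actually proved is the unrolled one, $f(k)=f(k-1)+1$ with $f(0)=\frac{s}{2}+1$, i.e.\ $\delta(D)\geq \frac{s}{2}+k+1$ --- which is (Clif 2) again, with zero net gain. The devices you invoke to absorb the missing unit do not work: the only parity relation available is $\delta(D)\equiv d \pmod 2$, which says nothing about $\delta(D)$ relative to $\frac{s}{2}+2k$ (and even when $s$ is odd the half-unit of rounding falls short of a full unit); and the residual $K-D$ has the same $\delta$, the same $\beta$ and the same index (Lemma \ref{residuel}), so applying (Clif 2) to it reproduces verbatim the inequality you already have rather than promoting anything to a strict inequality. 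The ``technical bookkeeping'' you defer to \cite{Mo1} is in fact the entire content of the theorem for $k\geq 1$; a genuinely different counting mechanism is required there, and the proposal does not supply it.
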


In the previous cited paper, a result is obtained in this direction.
\begin{thm} \cite[Th. 2.18]{Mo1}
\label{siegalite}
Let $D$ be a very special and effective divisor of degree $d$ 
on a real curve $X$ such that (*) is an equality i.e.
$$r=\dim\, |D|= \frac{1}{2} (d-\frac{1}{2}(s-2))$$
then $X$ is an hyperelliptic curve with 
$\delta( g_2^1 )=2$ and $|D|=rg_2^1 $ with $r$ odd.
\end{thm}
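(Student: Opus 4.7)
The plan is to reduce the problem to classical Clifford equality on $X_{\CC}$ and then apply the classical Clifford classification. The hypothesis $r = \tfrac{1}{2}(d - \tfrac{1}{2}(s-2))$ rewrites as $d - 2r = \tfrac{s-2}{2}$, so the complex Clifford defect $\tfrac{d}{2} - r$ equals $\tfrac{s-2}{4} \geq 0$, and the main task is to show this defect vanishes, forcing $s = 2$.

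First I would extract the parity information. Since $D$ is very special, (Clif 1) fails, and by Theorem \ref{cliffreelA} the inequality (Clif 2) holds: $r \leq \tfrac{1}{2}(d - \beta(D))$. Combined with the equality this forces $\beta(D) \leq \tfrac{s-2}{2}$, hence $\delta(D) \geq \tfrac{s+2}{2}$, and $s$ is necessarily even. The crux is then to rule out the case $s \geq 4$. The strategy I would try is to pass to the base-point-free part $D' = D - F$ of $|D|$: complex Clifford applied to $D'$ forces $\deg F \leq \tfrac{s-2}{2}$, so $F$ is small. A careful analysis of how $\delta$ and $\beta$ transform when a base point is removed from a ``$\beta$-component'' versus a ``$\delta$-component'' should eventually produce a sub-divisor for which (Clif 2) or complex Clifford is violated, yielding the desired contradiction. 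I expect this combinatorial reduction to be the main obstacle, since neither (Clif 2) alone nor complex Clifford alone is tight enough, and both must be played off each other.

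Once $s = 2$ is established, complex Clifford becomes an equality, and the classical theorem gives that $D = 0$, $D = K$, or $X$ is hyperelliptic with $|D| = r g_2^1$. The case $D = 0$ is excluded since very specialness forces $r \geq 1$. The case $D = K$ on a hyperelliptic curve already fits the desired form via $K \sim (g-1) g_2^1$; the case $D = K$ on a non-hyperelliptic curve has to be excluded by a separate argument (e.g.\ a Martens-type refinement together with the very special condition $\delta(K) > 0$). Finally, for $|D| = r g_2^1$ one has $\delta(D) \equiv r \, \delta(g_2^1) \pmod 2$, and since $\delta(g_2^1) \in \{0,2\}$, the very special condition $\delta(D) > 0$ forces $\delta(g_2^1) = 2$ and $r$ odd, completing the classification.
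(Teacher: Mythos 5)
First, a point of reference: the paper does not prove this statement at all --- it is imported verbatim from \cite[Th.~2.18]{Mo1} --- so there is no in-text proof to compare yours against. Judged on its own terms, your setup is correct at both ends. The hypothesis does give $d-2r=\frac{s-2}{2}$, hence $s$ even, and playing (Clif 2) against the equality does give $\beta(D)\leq\frac{s-2}{2}$ and $\delta(D)\geq\frac{s+2}{2}$; and once $s=2$ is known, the classical Clifford classification plus the parity of $\delta(rg_2^1)$ finishes the argument. (In fact $D=K$ is excluded more cheaply than you suggest: $\delta(K)=\delta(K-0)=\delta(0)=0$, so $K$ is never very special, and no ``separate argument'' in the non-hyperelliptic case is needed.)

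The genuine gap is exactly where you flag it: ruling out $s\geq 4$. The mechanism you propose --- shrinking to the base-point-free part and tracking how $\delta$ and $\beta$ transform until (Clif 2) or complex Clifford is violated --- cannot succeed, because the hard case is already base-point-free and, for $s\geq 4$, the two inequalities you want to play off each other are mutually consistent. For example, with $s=4$ and $k=0$ the equality forces $\delta(D)=3$, $\beta(D)=1$, $r=\frac{1}{2}(d-1)$, which satisfies both $r\leq\frac{1}{2}(d-\beta(D))$ and $r\leq\frac{d}{2}$ with room to spare, so no purely numerical juggling of these bounds produces a contradiction. Eliminating $s\geq4$ requires genuine geometric input of the kind this paper deploys elsewhere for exactly such configurations: Castelnuovo's bound for simple systems, the structure theorem for non-simple very special systems (Theorem \ref{nonsimple}), and the non-existence of very special nets (Theorem \ref{dim2}); compare the proofs of Proposition \ref{s=3} and Theorem \ref{s=4}, where the case $\delta(D)=3$ with $s\in\{3,4\}$ is killed precisely by Castelnuovo plus $\dim|D|\neq2$. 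As written, the central step of your proof is a declaration of intent rather than an argument.
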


Let $D\in \Div(X)$ be a divisor with the property that $\L (D)$ has at
least one nonzero global section. The linear system $|D|$ is called
base point free if $h^0(D-P)\not=h^0(D)$ for all closed points
$P$ of $X$. If not, we may write $|D|=E+|D'|$ with $E$ a non zero
effective divisor called the base divisor of $|D|$, and with $|D'|$
base point free. A closed point
$P$ of $X$ is called a base point of $|D|$ if $P$ belongs to the support of
the base divisor of $|D|$. We note that $$\dim\, |D|=\, \dim\, |D' |.$$

As usual, a $g_d^r$ is an $r$-dimensional complete linear system
of degree $d$ on $X$. Let $|D|$ be a base point free $g_d^r$ on $X$.
The linear system $|D|$ defines a morphism $\varphi_{|D|} :\, X\rightarrow
\PP_{\RR}^r$ onto a non-degenerate (but maybe singular) curve in
$\PP_{\RR}^r$ i.e. $\varphi_{|D|}(X)$ is not contained in any hyperplane of
$\PP_{\RR}^r$. If $\varphi_{|D|}$ is birational (resp. an isomorphism) onto
$\varphi_{|D|} (X)$, the $g_d^r$ (or $D$) is called simple (resp. very
ample). Let $X'$ be the normalization of $\varphi_{|D|}(X)$, 
and assume $D$ is not
simple i.e. $|D-P|$ has a base point for any closed point $P$ of $X$. Thus,
the induced morphism 
$\varphi_{|D|} :X\rightarrow X'$ is a non-trivial covering
map of degree $t\geq 2$. In particular, there exists $D'\in \Div(X')$ such that
$|D'|$ is a $g_{\frac{d}{t}}^{r}$ and such that $D=\varphi_{|D|}^* (D')$, i.e.
$|D|$ is induced by $X'$. If $g'$ denote the genus of $X'$,
$|D|$ is classically called compounded of an involution of order $t$
and genus $g'$. In the case $g'>0$, we speak of an irrational involution 
on $X$.

Concerning non-simple very special divisors, 
a complete description is given in \cite{Mo2}:
\begin{thm} \cite[Thm. 2.5, Thm. 4.1]{Mo2}
\label{nonsimple}
Let $D$ be a very special divisor of degree $d$ such that the base
point free part of $|D|$ is non-simple. Then 
\begin{description}
\item[({\cal{i}})] $D$ is base point free,
\item[({\cal{ii}})] $\delta(D)=s$,
\item[({\cal{iii}})] the index of $D$ is null.
\end{description}
If moreover $\dim\, |D|=r>1$ then the morphism $\varphi_{|D|}
:X\rightarrow X'$ is a non-trivial covering
map of degree $2$ and $D=\varphi_{|D|}^*
(D')$ with $D'\in \Div(X')$ such that $|D'|=g_{\frac{d}{2}}^{r}$. Let $g'$ denote the genus of $X'$ and let $s'$ be the number of
connected components of $X'(\RR)$, we have the following additional 
properties: 
\begin{description}
\item[({\cal{iv}})] $D'$ is a base point free non-special divisor and
  $X'$ is an $M$-curve.
\item[({\cal{v}})] $s$ is even, $s'=\frac{s}{2}$, $r$ is odd and
  $\delta(D')=s'$. 
\item[({\cal{vi}})] $a(X)=0$ and $g$ is odd and
there is a very special pencil on $X$.
\end{description}
\end{thm}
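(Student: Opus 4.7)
The plan is to exploit the covering structure coming from non-simplicity of the base-point-free part of $|D|$. Write $|D|=E+|D_0|$ with $E$ the (possibly zero) base divisor and $|D_0|$ base-point-free. By hypothesis $|D_0|$ is non-simple, so there is a non-trivial covering $\pi:X\to X'$ of degree $t\geq 2$ defined over $\RR$, and a divisor $D_1\in\Div(X')$ with $D_0=\pi^*(D_1)$ and $\dim|D|=\dim|D_0|=\dim|D_1|$. Set $e=\deg(E)$, $d_1=\deg(D_1)=(d-e)/t$, and let $g'$, $s'$ denote the genus of $X'$ and the number of real components of $X'$.

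The core of the argument is a squeeze between three inequalities: (a) the very-special hypothesis $\dim|D|>\tfrac12(d-\delta(D))$; (b) the bound for $\dim|D_1|$ coming either from Clifford (if $D_1$ is special) or from Riemann-Roch (if $D_1$ is non-special); and (c) the refined bound (*) from Theorem~\ref{cliffreelB} together with $\delta(D)\leq s\leq g+1$. I expect a careful bookkeeping will show that the only way all three can coexist is $t=2$, $e=0$, $D_1$ non-special, and $\delta(D)=s$, and will furthermore force $s=2s'=2(g'+1)$, so that $X'$ is an $M$-curve. This simultaneously yields (i), (ii), the content of (iv), and the $s$, $s'$, $X'$-parts of (v). The main obstacle here is ruling out intermediate possibilities where the inequalities still close up (typically $t\geq 3$ with small $\delta(D)$, or $t=2$ with $D_1$ special): this requires iterating the Clifford-type inequality on $X'$ and using $s'\leq g'+1$.

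For (iii), once $D_1$ is non-special we have $r:=\dim|D|=d_1-g'=d/2-g'$; plugging $s=2(g'+1)$ into $r=\tfrac12(d-s)+k+1$ gives $k=0$. For the parity of $r$: the relation $s=2s'$ forces each component of $X(\RR)$ to map with topological degree $1$ onto its image component, so $\deg(D_0|_C)=\deg(D_1|_{\pi(C)})$ and $\delta(D)=s$ transfers to $\delta(D_1)=s'$. Consequently $d/2=d_1\equiv s'=g'+1\pmod 2$, whence $r=d/2-g'\equiv 1\pmod 2$, completing (v).

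For (vi): the relation $s=2s'$ combined with $\deg\pi=2$ forces $\pi^{-1}(X'(\RR))=X(\RR)$ with $\pi$ unramified over each real component. A standard Comessatti-type argument on real double covers of $M$-curves then gives that $X$ is separating, i.e.\ $a(X)=0$; combined with $s$ even this yields $g$ odd via $s\equiv g+1\pmod 2$. A very special pencil on $X$ is produced by pulling back from $X'$ a suitably chosen pencil of odd degree on every real component of $X'(\RR)$: its pullback has $\delta$ equal to $s$ and, by construction, is non-simple, so the criterion of the present theorem applied in the $r=1$ case identifies it as very special. The most delicate step overall is the parity argument linking $\delta(D_1)=s'$ to the parity of $r$, on which both (v) and the construction in (vi) depend.
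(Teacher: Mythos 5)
First, a point of comparison: the paper does not prove this statement at all --- it is imported verbatim from \cite[Thm. 2.5, Thm. 4.1]{Mo2} --- so there is no internal proof to measure yours against; your attempt has to stand on its own. Judged that way, it is a reasonable outline of the natural strategy but has a genuine gap at its core. Everything in (i)--(iv), including the key claims $t=2$, $e=0$, $D_1$ non-special, $\delta(D)=s$ and $X'$ an $M$-curve, is concentrated in the sentence ``I expect a careful bookkeeping will show\dots'', and you then explicitly defer ``the main obstacle'' (the cases $t\geq 3$, and $t=2$ with $D_1$ special) to an unspecified iteration of Clifford-type inequalities. That deferred step \emph{is} the theorem. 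The three inequalities you name do not visibly close up: if $D_1$ is special, Clifford on $X'$ gives $d-e\geq 2tr$, the very-special hypothesis gives $d-e\leq 2r-2+\delta(D_0)\leq 2r-2+s$, and Theorem \ref{cliffreelB} gives $d-e\geq 2r+\frac{1}{2}(s-2)$; these are simultaneously satisfiable for many triples $(r,s,t)$, and eliminating them actually requires finer inputs such as Proposition \ref{condition2} ($d+\delta(D)\geq 2s+2k+4$ for non-pencils) together with a topological analysis of how each component of $X(\RR)$ sits over $X'(\RR)$ (image a full component or an interval, local degree odd or even), which is what controls $\delta(\pi^*D_1)$ in terms of $\delta(D_1)$. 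None of that appears in the proposal, and the degenerate case $g'=0$ (the system composed with a pencil of degree $t\geq 2$, e.g. $|D|=r\,g_2^1$ on a hyperelliptic curve) is not addressed separately even though it behaves differently.

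The later parts then lean on these unproved claims. The parity of $r$ and the identity $\delta(D_1)=s'$ are deduced from $s=2s'$ and from the assertion that each real component of $X$ maps with degree $1$ onto a full component of $X'(\RR)$; a priori a component could map two-to-one onto a component of $X'(\RR)$, or onto a closed interval, and excluding this is exactly where $\delta(D)=s$ and the $M$-curve property of $X'$ come from, so the argument is circular as written. By contrast, the pieces you do carry out in detail are essentially fine: given (iv) and (v), the computation $k=0$, the congruence $d_1\equiv\delta(D_1)\pmod 2$ giving $r$ odd, the disconnectedness of $X(\CC)\setminus X(\RR)$ via the two halves of the separating curve $X'$, and the production of a very special pencil by pulling back $P'_1+\cdots+P'_{s'}$ (Riemann--Roch on the $M$-curve $X'$ gives a pencil, and specialness of the pullback follows from $s\leq g-1$) are all sound. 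The missing piece is the quantitative heart of the statement, not its corollaries.
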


In this note, we give conditions under which a
real curve with few real connected
components can have a very special system.
\begin{thm} 
\label{introthm1}
Let $X$ be real curve with $s\leq 4$. If $X$ has a
  very special linear system then $X_{\CC}$ and $X$ are $s$-gonal,
  $s\geq 2$,
  $X$ has a very special pencil and $X$ is a separating curve i.e. $a(X)=0$.
\end{thm}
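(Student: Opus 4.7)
The strategy is to reduce everything to producing a very special pencil and to read off the four conclusions from it together with Theorem \ref{nonsimple}. Let $|D|$ be the given very special $g_d^r$ with index $k$, so that $r=\tfrac12(d-\delta(D))+k+1$. The key numerical input is obtained by substituting this identity into the inequality $(*)$ of Theorem \ref{cliffreelB}, which rearranges to
\[
\delta(D)\geq\frac{s+2}{2}+2k.
\]
Since $\delta(D)\leq s$, the case $s\leq 1$ is already excluded (consistent with Theorem \ref{cliffreelA}), so $s\geq 2$. Moreover, for $s\leq 4$ the same inequality forces $k=0$, and with $k=0$ one has $d=\delta(D)+2(r-1)$, i.e.\ $d=2r$ for $s=2$, $d=2r+1$ for $s=3$, and $d\in\{2r+1,2r+2\}$ for $s=4$.

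Next I would produce a very special pencil. If $r=1$ there is nothing to do. If $r\geq 2$, write $|D|=E+|D_0|$ with $|D_0|$ base point free. When $|D_0|$ is non-simple, Theorem \ref{nonsimple}(vi) immediately supplies both a very special pencil on $X$ and the separating property $a(X)=0$. When $|D_0|$ is simple the tight numerics must be exploited to reach a contradiction. For $s=2$, $r=d/2$ is classical Clifford equality, forcing $X_{\CC}$ hyperelliptic and $|D|=rg_2^1$, which is non-simple and contradicts the assumption. For $s=3$ (or $s=4$ with $\delta(D)=3$) the Clifford deficit is $1$; the Martens-Mumford classification of curves carrying such a linear system restricts $X_{\CC}$ to a short list (hyperelliptic, trigonal, plane quintic, $\ldots$), and a case-by-case inspection -- using the lower bound $\delta(D)\geq(s+2)/2$ to exclude real topologies that are incompatible -- produces a very special pencil of degree $s$. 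The case $s=4$, $\delta(D)=4$ is treated analogously.

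Once a very special pencil $|D_1|$ is in hand, the remaining statements follow quickly. Any pencil on a positive-genus curve is non-simple, so Theorem \ref{nonsimple}(i)--(iii) applies: $D_1$ is base point free, $\delta(D_1)=s$ and its index is $0$, whence $\deg D_1=s$. Therefore the gonality of $X$ (and a fortiori of $X_{\CC}$) is at most $s$. Because $\delta(D_1)=\deg D_1=s$, each divisor of $|D_1|$ has degree $1$ on every connected component of $X(\RR)$ and contains no complex conjugate pair, so every real fibre of $\varphi_{|D_1|}\colon X\to\PP^1_{\RR}$ sits entirely in $X(\RR)$. Hence $\varphi_{|D_1|}^{-1}(\PP^1(\RR))=X(\RR)$, and a standard covering-theoretic argument over the two half-planes of $\PP^1(\CC)\setminus\PP^1(\RR)$ shows that $X(\CC)\setminus X(\RR)$ has exactly two components, i.e.\ $a(X)=0$.

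For the lower bound on gonality I would argue by contradiction: a pencil $g_t^1$ on $X_{\CC}$ with $t<s$, if independent of $|D_1|$, yields $g\leq(s-1)(t-1)$ by Castelnuovo-Severi, which combined with the bound $d\leq 2g-2$ imposed by the speciality of $|D|$ is incompatible with the existence of the original $|D|$ when $s\leq 4$; and if the two pencils are compounded of the same involution, a short analysis of the common cover together with $\delta(D_1)=\deg D_1=s$ forces $t\geq s$. The principal obstacle in the whole argument is the simple-case analysis in step~2: this is precisely where the hypothesis $s\leq 4$ is used in an essential way, through the Mumford-type classification of curves carrying linear systems of small Clifford deficit combined with the extra real constraint on $\delta$.
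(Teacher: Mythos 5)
Your opening numerical step is correct and even a little slicker than the paper's: substituting $r=\frac{1}{2}(d-\delta(D))+k+1$ into $(*)$ does give $\delta(D)\geq\frac{s+2}{2}+2k$, hence $s\geq 4k+2$, so $s\geq 2$ and $k=0$ when $s\leq 4$ (the paper reaches the same conclusions from Clifford's theorem in Propositions \ref{s=2}, \ref{s=3} and Lemma \ref{lempours=4}). The overall strategy---reduce to producing a very special pencil, invoke Theorem \ref{nonsimple} in the non-simple case, and deduce $a(X)=0$ from the totally real pencil---also matches the paper.

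The gap is in the simple case, which is where all the real work lies. You defer it to an unperformed ``case-by-case inspection'' of the Martens--Mumford/Clifford-index classification, asserting that the constraint $\delta(D)\geq(s+2)/2$ rules out the incompatible real topologies and produces a very special pencil of degree $s$, and you declare the case $s=4$, $\delta(D)=4$ ``treated analogously.'' This cannot be waved through. Knowing that $X_{\CC}$ is, say, tetragonal does not by itself produce a \emph{real} $g_4^1$ with $\delta(g_4^1)=4$: that is a statement about the topology of $X(\RR)$, and the paper proves it by a genuinely geometric argument---Castelnuovo's bound forces $r=3$, $d=8$, the image is an extremal space curve lying on a quadric, and Proposition \ref{quadrique} analyses the bidegree and the homology classes of the components of $X(\RR)$ on $\PP_{\RR}^1\times\PP_{\RR}^1$ to extract the very special $g_s^1$. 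Nothing in your outline replaces this. (For $s=3$ the paper avoids the classification entirely: Castelnuovo's bound gives $d\leq 5$ and $r\leq 2$, contradicting Theorem \ref{dim2} when $d\leq g-1$, while Proposition \ref{criteresimple}, resting on the uniform position lemma, disposes of $d\geq g$; you do not address the $d\geq g$ branch at all.) Finally, your gonality lower bound via Castelnuovo--Severi is not airtight: $g\leq(s-1)(t-1)\leq 6$ is not by itself incompatible with the existence of $|D|$ (curves with $s=g-1$ do carry very special pencils), and the ``compounded'' case is only gestured at. The paper instead excludes smaller gonality by quoting that very special systems on hyperelliptic (resp.\ trigonal) curves force $s=2$ (resp.\ $s=3$), via \cite{Mo1} and Theorem \ref{trigonal}.
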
 

If $s\geq g-4$, we prove that the existence of a very special linear
system implies also the existence of a very special pencil.

By the previous theorem, the existence of a very special linear system
on a real curve, with $s\leq 4$, forces the gonality of the curve to
be $s$. The following theorem concerns very special linear series on a
real curve with a small gonality.
\begin{thm} 
\label{introthm2}
Let $X$ be real curve such that $X$ and $X_{\CC}$ are both $n$-gonal
with $2\leq n\leq 4$. If $X$ has a
very special linear system then 
$X$ has a very special pencil and $X$ is a separating curve
i.e. $a(X)=0$.
Moreover, if $|D|$ is a very special linear system then $\ind(D)=0$,
$\delta(D)=s$, $|D|$ and $|K-D|$ are base point free.
\end{thm}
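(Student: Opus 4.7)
My plan is to reduce everything to Theorem~\ref{nonsimple} by showing that, under the gonality hypothesis $n\leq 4$, the base-point-free part of any very special linear system must be non-simple. I would first fix a very special $|D|$ on $X$ and write $|D|=F+|E|$ with $|E|$ its base-point-free part, so $\dim|E|=\dim|D|=r$. The argument then splits into two cases according to whether $|E|$ is simple.

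\textbf{Non-simple case.} Applying Theorem~\ref{nonsimple}(i)--(iii) to $|D|$ immediately gives $F=0$, $\delta(D)=s$, and $\ind(D)=0$. If $r>1$, parts (iv)--(vi) also deliver $a(X)=0$ and a very special pencil on $X$. If $r=1$, then $|D|$ is itself a very special pencil; I would then combine $\ind(D)=0$ with $\delta(D)=s$ to conclude $\deg D=s$ and that every real component of $X$ meets a general fibre of $\varphi_{|D|}:X\to\PP_{\RR}^1$ in exactly one real point, so $\varphi_{|D|}^{-1}(\PP^1(\RR))=X(\RR)$; this means $\varphi_{|D|}$ is a separating morphism, yielding $a(X)=0$.

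\textbf{Simple case (the main obstacle).} I expect this to be the hard part. For $r=1$ it is easy: simplicity would mean $\varphi_{|E|}$ is birational onto $\PP_{\RR}^1$, hence $X\cong\PP_{\RR}^1$, contradicting the existence of an effective special divisor. For $r\geq 2$, $\varphi_{|E|}$ birationally maps $X$ onto a non-degenerate curve of degree $\deg E$ in $\PP_{\RR}^r$, and I would combine the very-special inequality $\dim|E|>(\deg E-\delta(E))/2$, the inequality (Clif 2) from Theorem~\ref{cliffreelA}, and the hypothesis $n\leq 4$ to derive a contradiction. The underlying idea is that on a curve of gonality $n\leq 4$ every simple base-point-free special linear system of dimension $\geq 2$ has degree too large, relative to its dimension, for the very-special inequality to hold; large-dimensional special series on low-gonality curves must be compounded with the gonality pencil $g_n^1$. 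For $n=2$ this is classical, since all special moving series on a hyperelliptic curve are multiples of $g_2^1$ and thus non-simple; for $n=3,4$ one invokes the gonality stratification of $W^r_d$ in the style of Martens--Mumford--Keem.

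\textbf{Base-point-freeness of $|K-D|$.} With $\ind(D)=0$ and $\delta(D)=s$ now known for every very special $|D|$, suppose $P$ were a base point of $|K-D|$. Riemann-Roch gives $\dim|D+P|=\dim|D|+\deg P$, and a routine check shows $|D+P|$ is again very special. If $P$ is real, then since $\delta(D)=s$ means $D$ has odd degree on every real component, I would get $\delta(D+P)=s-1$, contradicting the already-established $\delta=s$ for very special systems. If $P$ is non-real, then $\deg P=2$ and a brief computation gives $\ind(D+P)=1$, contradicting $\ind=0$. Hence $|K-D|$ has no base points.
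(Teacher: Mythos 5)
Your reduction collapses at the ``simple case,'' because the claim you hope to prove there is false: on $4$-gonal (and even trigonal) curves there \emph{do} exist simple very special linear systems, so you cannot derive a contradiction from simplicity. The paper itself exhibits them. Proposition~\ref{quadrique2} shows that a smooth curve of bidegree $(s,4)$ on a hyperbolo\"{\i}d with all real components of type $(1,0)$ carries a simple very special $g^3_{s+4}$; Theorem~\ref{deltag41=4} lists, among the very special systems on a $4$-gonal curve with $\delta(g_4^1)=4$, the simple cases $|D|=|g_4^1+h_4^1|$ (a smooth $(4,4)$ curve on a quadric) and $|D|=|K-h_4^1|$ with $d\geq g$; and in the trigonal case the residual $K-g_3^1$ is simple of dimension $3$ (see the case $s=3$ in the proof of Proposition~\ref{quadrique}). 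So the heuristic that ``every simple base-point-free special series of dimension $\geq 2$ on a curve of gonality $\leq 4$ has degree too large for the very-special inequality'' is simply not true, and no appeal to Martens--Mumford--Keem type gonality stratifications will rescue it. Your non-simple case is fine (it is exactly Theorem~\ref{nonsimple} plus the observation that a very special pencil is a separating morphism), but that is the easy half.

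What the paper actually does in the simple case is quantitative and constructive rather than exclusionary: after reducing to $\ind(D)=0$ and pinning down $\delta(D)$ (Lemma~\ref{lempours=4}, Proposition~\ref{s=3}), it applies Castelnuovo's bound when $d\leq g-1$ to force $r=3$, $d=s+4$, identifies $\varphi_{|D|}(X)$ as an extremal curve on a quadric, and then uses the bidegree/homology-class analysis of Proposition~\ref{quadrique} to extract a very special pencil $g_s^1$ from one ruling; when $d\geq g$ it uses the uniform-position estimate of Proposition~\ref{criteresimple} to show $|K-D|$ is a very special pencil. The $n$-gonal classification (Theorems~\ref{trigonal}, \ref{deltag41=0}, \ref{deltag41=2}, \ref{deltag41=4}, built on Eisenbud's Lemma~\ref{Eisenbud} and the type~1/2/3 decomposition) then yields primitivity, $\ind(D)=0$ and $\delta(D)=s$ for \emph{all} very special systems. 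Your final paragraph on base-point-freeness of $|K-D|$ is essentially Lemma~\ref{basepointfreepart}, but it presupposes the very facts ($\delta=s$, $\ind=0$ for every very special system) that your main argument fails to establish.
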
 

In the last section of this note, we improve the result of Theorem
\ref{cliffreelB}. 
\begin{thm}
\label{introthm3}
Let $|D|$ be a very special linear system of degree $d$ on a real curve $X$.
\begin{description}
\item[({\cal{i}})] We have $$\dim |D|\leq
  \frac{1}{2}(d-\frac{s-2}{2}),$$
with equality if and only if $X$ is
hyperelliptic, the $g_2^1$ is very special and $s=2$.
\item[({\cal{ii}})] Assume $X$ is not hyperelliptic. We have $$\dim |D|\leq
  \frac{1}{2}(d-\frac{s-1}{2}),$$
with equality if and only if $X$ is
trigonal, a $g_3^1$ is very special and $s=3$.
\item[({\cal{iii}})] Assume $X$ is not hyperelliptic and not
  trigonal. 
  We have $$\dim |D|\leq
  \frac{1}{2}(d-\frac{s}{2}),$$
with equality if and only if $X$ is
4-gonal, a $g_4^1$ is very special and $s=4$.
\item[({\cal{iv}})] Assume $X$ has gonality $\geq 5$. We have $$\dim |D|\leq
  \frac{1}{2}(d-\frac{s+1}{2}).$$
\end{description}
\end{thm}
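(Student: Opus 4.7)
The proof is a cascade: each of (ii), (iii), (iv) follows from the previous part by observing that the previous bound is strict under the new gonality hypothesis and then rounding, so the substantive work lies in characterizing the equality cases of (i), (ii) and (iii).

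The inequality in (i) is Theorem \ref{cliffreelB}. If equality holds, Theorem \ref{siegalite} yields $X$ hyperelliptic, $\delta(g_2^1)=2$, and $|D|=rg_2^1$ with $r$ odd. Substituting $d=2r$ and $\dim|D|=r$ into the equality $\dim|D|=\frac{1}{2}(d-\frac{s-2}{2})$ forces $s=2$ immediately. Conversely, on a hyperelliptic curve with $s=2$ and very special $g_2^1$, any $|D|=rg_2^1$ with $r$ odd achieves equality.

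For the inequality in (ii), assume $X$ is not hyperelliptic. By (i), the bound of Theorem \ref{cliffreelB} is strict, i.e.\ $4\dim|D|<2d-s+2$. Since both sides are integers this gives $4\dim|D|\leq 2d-s+1$, which is (ii). The inequalities (iii) and (iv) follow in the same manner from the equality cases of (ii) and (iii): non-trigonal rules out the equality case of (ii), and non-$4$-gonal rules out that of (iii), so in each case a strict integer inequality rounds to the next bound.

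The principal work is the equality case of (ii) (that of (iii) is analogous). Suppose equality; integrality of both sides forces $s$ odd, and then
$$\delta(D)=d-2\dim|D|+2\,\ind(D)+2=\frac{s-1}{2}+2\,\ind(D)+2.$$
Combined with $\delta(D)\leq s$ this gives $s\geq 4\,\ind(D)+3\geq 3$. When $s=3$, Theorem \ref{introthm1} forces $X$ to be trigonal, and Theorem \ref{introthm2} then supplies a very special $g_3^1$ and forces $\ind(D)=0$, as required. The main obstacle is excluding the possibilities $s\geq 5$ together with $\ind(D)\geq 1$: for this one would invoke the assertion from the introduction that $s\geq g-4$ already forces the existence of a very special pencil, and then use Theorem \ref{nonsimple} together with the numerical constraint on $\delta(D)$ displayed above to derive a contradiction. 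The equality case of (iii) is handled in exactly the same way, with $s=4$ and $g_4^1$ replacing $s=3$ and $g_3^1$; here $s=4$ sits precisely at the threshold of Theorem \ref{introthm1}, and Theorem \ref{introthm2} with $n=4$ again supplies the pencil and forces $\ind(D)=0$.
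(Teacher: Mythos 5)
Your treatment of (i) and the overall cascade structure (strictness of the previous bound plus integrality yields the next bound) is sound, and deriving the inequality of (ii) by rounding the strict form of (i) is a legitimate shortcut. But there is a genuine gap at the heart of the argument: the equality cases of (ii) and (iii). From equality in (ii) you correctly get $\delta(D)=\frac{s-1}{2}+2\ind(D)+2\leq s$, hence $s\geq 4\ind(D)+3$, and you handle $s=3$ via Theorems \ref{introthm1} and \ref{introthm2}. To exclude $s\geq 5$ you propose to invoke the assertion that $s\geq g-4$ forces a very special pencil; but that assertion is conditional on $s\geq g-4$, and nothing in your situation bounds $g$ in terms of $s$, so it simply does not apply. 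A case such as $s=5$, $\ind(D)=0$, $\delta(D)=4$ is not excluded by the numerical constraint you display nor by Theorem \ref{nonsimple} (which only constrains non-simple systems); Lemma \ref{condition1} and Proposition \ref{condition2} do not rule it out either. The same gap occurs for (iii) with $s\geq 6$, and since in your cascade the inequalities of (iii) and (iv) are deduced from the equality characterizations of (ii) and (iii), the gap propagates to those parts as well.

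The paper closes exactly this hole with the machinery of its last section, which your proposal never uses. Theorem \ref{cliffreelC} proves directly that a curve of real gonality $\geq 4$ satisfies $\dim|D|\leq\frac{d}{2}-\frac{s}{4}$; this is already the bound in (iii) and is strictly stronger than the bound in (ii), so equality in (ii) forces the real gonality to be $3$ with no case analysis on $s$ at all, after which Theorem \ref{trigonal} gives $s=3$ and the very special $g_3^1$. The equality case of (iii) is Proposition \ref{egalitecliffordreelC}, whose proof is genuinely substantial: for simple $|D|$ it shows $3r=d+1$, so $\varphi_{|D|}(X)$ is extremal in the sense of Castelnuovo, and it then uses semi-canonicality of $D$, Lemma \ref{accola}, and the classification of extremal curves (Lemma \ref{extremale}) to produce the $g_4^1$ and force $s=4$; the non-simple case is treated via Theorem \ref{nonsimple} and Theorem \ref{s=4}. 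None of this is replaced by anything in your argument, so as written the proposal does not establish (ii), (iii) or (iv).
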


\section{Properties of very special divisors}

In this section, we recall and establish some results on very special
linear systems.

Let $D$ be a special and effective divisor. The linear system $|D|$ is
called primitive if $|D|$ and $|K-D|$ are base point free.
If $|D|$ is base point free and $F$ is the base divisor of $|K-D|$
then $|D+F|$ is primitive (it is called the primitive hull of $|D|$)
and satisfies $\dim |D+F|=\dim |D|+\deg(F)$.

We recall that if $D\in Div(X)$ then $\delta (D)=\delta(K-D)$.
By the previous remark and Riemann-Roch, we get:
\begin{lem} \cite[Lem. 2.4]{Mo2}
\label{residuel}
Let $D$ be a very special divisor then $K-D$ is also very special
and $\ind(D)=\ind(K-D)$.
\end{lem}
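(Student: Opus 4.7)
The plan is to exploit the symmetry $\delta(D)=\delta(K-D)$ (explicitly recalled in the text, and coming from the fact that $K$ has even degree on each real connected component of $X(\RR)$) together with Riemann--Roch, to transfer the very special condition from $D$ to $K-D$.

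First I would check that $K-D$ is again effective and special, so that the notion ``very special'' applies to it. Effectivity follows because $D$ is special, hence $h^0(K-D)>0$; conversely, $K-D$ is special because $D$ is effective, hence $h^0(D)=h^0(K-(K-D))>0$. Both conditions follow from the hypotheses on $D$ with no further work.

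Next I would compute $\dim |K-D|$ directly from Riemann--Roch. Writing $d=\deg(D)$, the formula $h^0(D)-h^0(K-D)=d+1-g$ gives
$$\dim |K-D| \;=\; \dim |D| - d -1 + g.$$
Substituting the hypothesis $\dim |D|=\tfrac{1}{2}(d-\delta(D))+\ind(D)+1$, and using $\deg(K-D)=2g-2-d$ together with $\delta(K-D)=\delta(D)$, a one-line rearrangement yields
$$\dim |K-D| \;=\; \tfrac{1}{2}\bigl(\deg(K-D)-\delta(K-D)\bigr)+\ind(D)+1.$$
This simultaneously shows that $K-D$ fails the (Clif 1) inequality and that its index equals $\ind(D)$.

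There is essentially no obstacle here: the lemma is a purely arithmetic consequence of Riemann--Roch and the Galois-invariance of $\delta$ under the correspondence $D\leftrightarrow K-D$. The only point requiring a moment of care is the verification that $K-D$ is effective and special, which as noted above is immediate from the fact that $D$ itself is effective and special.
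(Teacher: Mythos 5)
Your proof is correct and follows exactly the route the paper indicates: the paper states this lemma (citing [Mo2, Lem.~2.4]) with only the remark that it follows from $\delta(D)=\delta(K-D)$ and Riemann--Roch, which is precisely the computation you carry out. Your additional check that $K-D$ is effective and special is a sound (if routine) completion of that sketch.
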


\begin{lem} 
\label{basepointfreepart}
Let $D$ be an effective divisor. Let $F$ be the base divisor of $|D|$.
If $D$ is very special then the base point free part $|D-F|$ of $|D|$
is also very special and $\ind(D-F)\geq \ind(D)$.
Moreover $\ind(D-F)= \ind(D)$ if and only if $F=P_1+\ldots +P_f$ with
the $P_i$ some real points among the $\delta (D)$ real connected
components on which the degree of the restriction of $D$ is odd, such
that no two of them belong to the same real connected component.
\end{lem}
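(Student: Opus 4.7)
The plan is to reduce the whole statement to a parity inequality on the real connected components. Since $F$ is the base divisor of $|D|$, one has $\dim|D-F|=\dim|D|$, and $D-F$ is still special because $h^0(K-(D-F))\geq h^0(K-D)>0$. Writing out the very special identity $\dim|D|=\tfrac{1}{2}(d-\delta(D))+\ind(D)+1$ and comparing with the analogous expression for $D-F$ gives
$$\dim|D-F|-\tfrac{1}{2}\bigl(\deg(D-F)-\delta(D-F)\bigr)=\ind(D)+1+\tfrac{1}{2}\bigl(\deg F+\delta(D-F)-\delta(D)\bigr).$$
Thus, as soon as I establish
$$\deg F+\delta(D-F)\geq \delta(D),\qquad(\star)$$
it will follow both that $D-F$ is very special and that $\ind(D-F)-\ind(D)=\tfrac{1}{2}(\deg F+\delta(D-F)-\delta(D))\geq 0$, with equality in $(\star)$ exactly when $\ind(D-F)=\ind(D)$.

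For $(\star)$ I would set $A$ (resp.\ $B$) to be the set of real components on which the restriction of $D$ (resp.\ $D-F$) has odd degree, so $\delta(D)=|A|$ and $\delta(D-F)=|B|$. For each real component $C$ the parities of $\deg(D|_C)$ and $\deg((D-F)|_C)$ differ precisely when $\deg(F|_C)$ is odd; in particular every $C\in A\setminus B$ satisfies $\deg(F|_C)\geq 1$. This gives the chain
$$\delta(D)-\delta(D-F)=|A|-|B|\leq |A\setminus B|\leq \sum_{C\in A\setminus B}\deg(F|_C)\leq \deg F,$$
the last step because $\sum_C \deg(F|_C)$ counts only the real points of $F$ with multiplicity and is therefore bounded by $\deg F$.

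For the equality case I would analyse each of the three inequalities in turn. Equality in the first forces $B\subseteq A$; equality in the second forces $\deg(F|_C)=1$ for every $C\in A\setminus B$, so the contribution of $F$ at each such component is a single simple real point; equality in the third forces $F$ to contain no non-real point and no real point lying in a component outside $A\setminus B\subseteq A$. Taken together these three conditions are exactly the stated description: $F=P_1+\cdots+P_f$ with the $P_i$ real, pairwise on distinct components, each among the $\delta(D)$ odd components of $D$. The converse implication is immediate since such an $F$ turns every inequality in the chain into an equality.

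The only genuinely delicate point is this equality analysis, because one must rule out the scenario where an ``extra'' base point of $F$ sitting on an even component of $D$ is counterbalanced by a new odd component appearing in $D-F$. The chain of three inequalities handles this automatically: no single term can be slack without strict inequality in $(\star)$, so if $\ind(D-F)=\ind(D)$ then $B\setminus A=\emptyset$, $F$ is supported on real points, and each contributes with multiplicity one in a distinct component of $A$.
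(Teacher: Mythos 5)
Your proof is correct. It reaches the same conclusion as the paper but organizes the argument differently: the paper removes the base points of $|D|$ one at a time and does a case analysis on the type of each point (a non-real point, a pair of real points on one component, a real point on an even component, a real point on an odd component), checking in each case whether the index stays fixed or increases by one, and leaving the composition of these steps implicit. You instead derive the closed-form identity $\ind(D-F)-\ind(D)=\tfrac{1}{2}\bigl(\deg F+\delta(D-F)-\delta(D)\bigr)$ in one stroke and reduce everything to the single counting inequality $(\star)$, proved via the chain $|A|-|B|\leq|A\setminus B|\leq\sum_{C\in A\setminus B}\deg(F|_C)\leq\deg F$. What your version buys is a cleaner and more airtight treatment of the equality case: the ``only if'' direction follows mechanically by forcing equality in each link of the chain, whereas the paper's point-by-point argument requires one to check that the order of removal does not matter and that the four cases exhaust all configurations (in particular that a base point on an even component cannot be compensated by a new odd component of $D-F$, a scenario your first inequality $B\subseteq A$ disposes of explicitly). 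What the paper's version buys is that each local step is immediately reusable elsewhere (it is invoked again in Corollary 2.4 and in the proofs of Sections 4--6 in exactly that pointwise form). Both arguments rest on the same two facts: $\dim|D-F|=\dim|D|$ and elementary parity bookkeeping on the real components.
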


\begin{proof}
Set $d=\deg (D)$ and $k=\ind (D)$.

Suppose that there exists a non-real point $Q+\bar{Q}$ such that
$Q+\bar{Q}\leq F$. Then $\dim |D|=\dim
|D-Q-\bar{Q}|=\frac{1}{2}(d-\delta(D))+k+1=\frac{1}{2}((d-2)-\delta(D))+(k+1)
+1$ and $\ind(D-Q-\bar{Q})=\ind (D)+1$. 

Suupose there are two real points $P,P'$ belonging to the same real connected
component, such that $P+P'\leq F$, then as before, $\ind(D-P-P')=\ind (D)+1$.

Suppose that there exists a real point $P$ belonging to a connected component on which
the degree of the restriction of $D$ is even, such that $P\leq F$. 
Then $\dim |D|=\dim
|D-P|=\frac{1}{2}(d-\delta(D))+k+1=\frac{1}{2}((d-1)-(\delta(D)+1))+(k+1)
+1=\frac{1}{2}(\deg(D-P)-(\delta(D-P))+(k+1)
+1$ and $\ind(D-P)=\ind (D)+1$.

Suppose that there exists a real point $P$ belonging to a connected 
component on which
the degree of the restriction of $D$ is odd, is contained in the base divisor
of $|D|$. Then $\dim |D|=\dim
|D-P|=\frac{1}{2}(d-\delta(D))+k+1=\frac{1}{2}((d-1)-(\delta(D)-1))+k
+1=\frac{1}{2}(\deg(D-P)-(\delta(D-P))+k
+1$ and $\ind(D-P)=\ind (D)$.
\end{proof}

\begin{cor}
\label{primitivehull}
Let $D$ be very special and assume $|D|$ is base point free then the
primitive hull $|D'|$ of $|D|$ is very special and $\ind(D')\geq \ind
(D)$. Let $E\in |D'-D|$, then $\ind(D')= \ind(D)$ if and only if 
$E=P_1+\ldots +P_e$ with
the $P_i$ some real points among the $\delta (D)$ real connected
components on which the degree of the restriction of $D$ is odd, such
that no two of them belong to the same real connected component.
\end{cor}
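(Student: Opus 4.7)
The plan is to derive the statement from Lemmas \ref{residuel} and \ref{basepointfreepart} by applying the latter to $K-D$ rather than to $D$ itself. Since $|D|$ is base point free, by definition of the primitive hull we have $D'=D+F$ where $F$ is the base divisor of $|K-D|$; in particular $F$ is a distinguished effective representative of the linear equivalence class $D'-D$, which I identify with the divisor $E$ in the statement.

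First, by Lemma \ref{residuel}, $K-D$ is very special with $\ind(K-D)=\ind(D)$, and we already have $\delta(K-D)=\delta(D)$. What is actually needed is slightly more: that the real connected components of $X(\RR)$ on which $K-D$ has odd restricted degree coincide as subsets with those on which $D$ has odd restricted degree. This follows from the standard fact that a canonical divisor has even degree on every real connected component, so $\deg(D|_C)\equiv \deg((K-D)|_C)\pmod 2$ for every such $C$.

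Next, apply Lemma \ref{basepointfreepart} to the very special divisor $K-D$, whose base divisor is precisely $F$: the base-point-free part $|K-D-F|=|K-D'|$ is again very special, with $\ind(K-D')\ge \ind(K-D)$, with equality if and only if $F=P_1+\cdots+P_e$ with the $P_i$ real points lying on distinct real connected components on which $\deg((K-D)|_C)$ is odd---equivalently, by the parity identification above, on distinct components among the $\delta(D)$ odd components for $D$. Finally, apply Lemma \ref{residuel} once more, this time to $K-D'$: since $|K-D'|$ is very special, so is $|D'|$, with $\ind(D')=\ind(K-D')$. Chaining everything together yields $\ind(D')\ge \ind(D)$, with equality iff $E=F$ has the stated form.

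The main (mild) obstacle is the parity identification of the odd components for $D$ and for $K-D$; once that is in hand, the corollary reduces to a sandwich of Lemma \ref{basepointfreepart} (applied to $K-D$) between two invocations of Lemma \ref{residuel}.
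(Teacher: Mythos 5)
Your proof is correct and follows essentially the same route as the paper: apply Lemma \ref{residuel} to pass to $K-D$, then Lemma \ref{basepointfreepart} to its base divisor $E=F$, then Lemma \ref{residuel} again to return to $D'=K-(K-D-E)$. Your explicit remark that the \emph{set} of real components on which the restricted degree is odd is the same for $D$ and $K-D$ (not merely that $\delta(D)=\delta(K-D)$) is a small point the paper leaves implicit, and it is handled correctly.
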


\begin{proof}
Let $E$ denote the base divisor of $K-D$. Then $K-D$ is very special
of index $\ind(D)$ (Lemma \ref{residuel}) and $K-D-E$ is also very
special of index $\geq\ind(D)$ by Lemma \ref{basepointfreepart}. By
Lemma \ref{residuel}, $D'=D+E=K-(K-D-E)$ is very
special of index $\geq\ind(D)$.

Assume $\ind(D')= \ind(D)$ then $\ind (K-D)=\ind (K-D-E)$ and by Lemma
\ref{basepointfreepart} we are done.
\end{proof}

\begin{thm} \cite[Thm. 3.6]{Mo2}
\label{dim2}
If $D$ is a very special divisor then $\dim |D|\not= 2$.
\end{thm}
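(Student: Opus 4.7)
The plan is to argue by contradiction: suppose $|D|$ is very special with $\dim|D|=2$. First I would reduce to the case that $|D|$ is base-point-free and simple. By Lemma \ref{basepointfreepart}, the base-point-free part of $|D|$ has the same dimension and is still very special, so we may assume $|D|$ is itself base-point-free. Then Theorem \ref{nonsimple}(v) rules out the possibility that $|D|$ is compounded of an involution, because that part forces the dimension $r$ to be odd, while here $r=\dim|D|=2$. Hence $|D|$ is simple and $\varphi_{|D|}$ realizes $X$ as birational to a non-degenerate real plane curve $C\subset\PP^2_{\RR}$ of degree $d$, with $|D|$ the complete $g^2_d$ cut out by lines.

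Next I would pin down the numerics. Writing $2=\frac{1}{2}(d-\delta(D))+k+1$ with $k=\ind(D)\geq 0$, and using $d\geq\delta(D)$ (each of the $\delta(D)$ odd-degree real components contributes at least $1$ to the effective divisor), only two arithmetical cases survive: $k=0$ with $d=\delta(D)+2$, and $k=1$ with $d=\delta(D)$; both give $d\leq s+2$. On the other hand, Theorem \ref{cliffreelB} yields the lower bound $d\geq\frac{1}{2}(s+6)$, and Theorem \ref{siegalite} forbids equality here (equality would force $r$ odd), so actually $d>\frac{1}{2}(s+6)$. These inequalities confine $s,d,\delta(D)$ into a narrow explicit range.

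The main obstacle will be the final step: eliminating this narrow range using the real geometry of the plane model. The idea is that for a generic real line $\ell\subset\PP^2_{\RR}$, the parity of the number of real intersection points of $\ell$ with each connected component of $C(\RR)$ is determined by whether that component is an oval or a pseudo-line, and under the birational map $\varphi_{|D|}$ this parity equals the parity of the degree of $D$ restricted to the corresponding component of $X(\RR)$. The very special condition forces $\delta(D)\geq d-2$ with $d$ close to $s+2$, so that $X(\RR)$ must have almost every component of odd degree with respect to $|D|$. Matching this with the Bezout-based parity constraint and with the singularity contribution $g\leq\binom{d-1}{2}$ bounded via Harnack $s\leq g+1$ should produce the desired numerical inconsistency and close the proof.
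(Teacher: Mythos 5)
First, a point of order: the paper does not prove this statement at all --- it is imported verbatim from \cite[Thm. 3.6]{Mo2} --- so there is no internal proof to compare your attempt against; I can only assess it on its own merits.

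Your reduction and your numerics are correct as far as they go ($k\le 1$, $d=\delta(D)+2-2k$, hence $d\le s+2$, and $d>\frac{1}{2}(s+6)$ from Theorems \ref{cliffreelB} and \ref{siegalite}), but the argument has genuine gaps. The ``narrow range'' is not narrow: those inequalities only give $s\ge 3$ and $\frac{s+6}{2}<d\le s+2$, and in particular for $s\ge 7$ they do not exclude the case $k=1$, $\delta(D)=d$; to kill it you need Lemma \ref{condition1}, which gives $d+\delta(D)\ge 2s+2$, i.e. $d\ge s+1$, contradicting $\delta(D)=d\le s$. More seriously, the final step as you state it cannot close the proof: the bound $g\le\binom{d-1}{2}$ combined with Harnack $s\le g+1$ and $d\le s+2$ reads $d\le\binom{d-1}{2}+3$, which is vacuous. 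The missing ideas are: (a) the $\delta(D)=d-2$ components of $X(\RR)$ on which $D$ has odd degree map under $\varphi_{|D|}$ to one-sided curves of $\PP^2_{\RR}$, and any two one-sided curves meet, so (as $\varphi_{|D|}$ is birational from a smooth curve) the plane model carries at least $\binom{d-2}{2}$ pairwise intersections among distinct components, whence $g\le\frac{(d-1)(d-2)}{2}-\binom{d-2}{2}=d-2$; (b) the contradiction then comes not from Harnack but from the residual series: $K-D$ is effective with $\delta(K-D)=\delta(D)=d-2$, forcing $2g-2-d\ge d-2$, i.e. $g\ge d$, against $g\le d-2$. Finally, note that invoking Theorem \ref{nonsimple}({\it v}) to dispose of the non-simple case is potentially circular: that item comes from \cite[Thm. 4.1]{Mo2}, which in the source follows the very statement \cite[Thm. 3.6]{Mo2} you are trying to prove, so you would need to check (or replace) that dependency.
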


\begin{prop}
\label{veryspecialpencil}
Let $D$ be a very special divisor of degree $d$ such that 
$\dim\, |D| =1$. Then $D=P_1 +\dots + P_{s}$ with $P_1 ,\ldots ,P_{s} $
some real points of $X$ 
such that no two of them belong to the same connected component
of $X(\RR )$ i.e. $d=\delta (D)=s$ and $\ind(D)=0$. Moreover $D$ is primitive.
\end{prop}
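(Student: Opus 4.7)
The plan splits naturally into two parts: first establishing the shape $D = P_1 + \cdots + P_s$ with $d = \delta(D) = s$ and $\ind(D) = 0$, then establishing primitivity.

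The shape follows quickly. From $\dim|D| = 1 = \frac{1}{2}(d - \delta(D)) + \ind(D) + 1$ and the fact that both $\ind(D) \geq 0$ and $d \geq \delta(D)$ (each odd component must contain at least one real point of the effective divisor $D$), one gets $\ind(D) = 0$ and $d = \delta(D)$. To upgrade to $\delta(D) = s$ and base-point-freeness of $|D|$, I would apply Theorem \ref{nonsimple}: the base-point-free part of $|D|$ defines a morphism $X \to \PP^1_{\RR}$ of degree $\geq 2$ (Theorem \ref{cliffreelA} forces $2 \leq s \leq g-1$, so $g \geq 3$ and $X \not\cong \PP^1_{\RR}$), which cannot be birational onto its image $\PP^1$, hence is non-simple. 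Conclusions (i)--(iii) of Theorem \ref{nonsimple} then give $|D|$ base point free, $\delta(D) = s$, and $\ind(D) = 0$. Combined with $d = \delta(D) = s$ and effectivity, each component of $X(\RR)$ supports exactly one real point of $D$.

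For primitivity, I assume $P$ is a base point of $|K - D|$ and derive a contradiction. By Riemann--Roch, $h^0(D + P) = h^0(D) + \deg P$, so $\dim|D + P| = 1 + \deg P$. If $P$ is real, it lies on some (necessarily odd-to-$D$) component, giving $\delta(D + P) = s - 1$; then $\dim|D + P| = 2 > 1 = \frac{1}{2}(\deg(D+P) - \delta(D+P))$ shows $D + P$ is very special of dimension $2$, contradicting Theorem \ref{dim2}. Hence all base points of $|K - D|$ are non-real. Letting $E$ be the non-real base divisor of degree $2k \geq 2$ and $D' = D + E$ the primitive hull, Corollary \ref{primitivehull} gives $D'$ very special with $\dim|D'| = 1 + 2k$, $\deg(D') = s + 2k$, $\delta(D') = s$, and $\ind(D') = k \geq 1$.

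Since $|D'|$ is base point free with $\ind(D') \geq 1$, conclusion (iii) of Theorem \ref{nonsimple} forces $|D'|$ to be simple (a non-simple $|D'|$ would have index zero). Theorem \ref{cliffreelB} applied to $D'$ simplifies to $k \leq (s-2)/4$, already ruling out $s \leq 5$; in the equality case Theorem \ref{siegalite} identifies $X$ as hyperelliptic with $|D'| = (1 + 2k)\,g_2^1$, and the resulting description $D = D' - E$ exhibits base components shared by every divisor of $|D|$, contradicting the base-point-freeness already established. The main obstacle is to handle the strict-inequality range of Theorem \ref{cliffreelB} uniformly; one must leverage the simplicity of $|D'|$ together with the non-real structure of $E$ further, likely via residual duality applied to $K - D'$.
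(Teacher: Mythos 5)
Your first part and your treatment of a \emph{real} base point of $|K-D|$ are both sound and essentially match the paper: the shape $d=\delta(D)=s$, $\ind(D)=0$ comes from the dimension formula plus Theorem \ref{nonsimple} applied to the (necessarily non-simple) pencil, and a real base point $P$ of $|K-D|$ yields the very special divisor $D+P$ of dimension $2$, killed by Theorem \ref{dim2}. The genuine gap is in the non-real base point case. Your route --- pass to the primitive hull $D'=D+E$ with $\ind(D')=k\geq 1$, deduce that $|D'|$ is simple from Theorem \ref{nonsimple}(iii), and then try to contradict Theorem \ref{cliffreelB} --- only yields $k\leq\frac{s-2}{4}$, which rules out $s\leq 5$ but says nothing for $s\geq 6$. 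You acknowledge this yourself (``the main obstacle is to handle the strict-inequality range \dots uniformly''), and the sketch of how to close it (``likely via residual duality applied to $K-D'$'') is not an argument. As written, primitivity is unproved whenever $|K-D|$ has a non-real base point and $s\geq 6$.

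The paper closes this case with a local trick that avoids the primitive hull and any Castelnuovo-type bound entirely: if $Q+\bar{Q}$ is a non-real base point of $|K-D|$, then by Riemann--Roch $\dim|D+Q+\bar{Q}|=3$, and subtracting a \emph{general real point} $P$ gives $\dim|D+Q+\bar{Q}-P|=2$ with degree $s+1$ and $\delta=s-1$, hence a very special divisor of dimension exactly $2$ --- again contradicting Theorem \ref{dim2}. In other words, the same tool that handles the real base point also handles the non-real one, once you lower the dimension back to $2$ by removing a generic real point. You could also repair your own argument by adding only one non-real base point $Q+\bar{Q}$ and invoking Proposition \ref{dim3} (a very special web has index $0$, but $\ind(D+Q+\bar{Q})=1$), but that proposition lives in a later section, whereas the paper's reduction to dimension $2$ stays within the tools already available at this point.
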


\begin{proof}
By \cite[Prop. 2.1]{Mo2}, we only have to prove $K-D$ is base point
free. 

Suppose that $P$ is a real base point of $|K-D|$. Then 
$\dim |D+P|=2=
\frac{1}{2}((s+1)-(s-1))
+1=\frac{1}{2}(\deg(D+P)-(\delta(D+P))
+1$ and $D+P$ is very special, impossible by Theorem \ref{dim2}.

Suppose that $Q+\bar{Q}$ 
is a non-real base point
of $|K-D|$. Then for a general choice of a real point $P$, we get 
$\dim |D+Q+\bar{Q}-P|=2=\frac{1}{2}((s+1)-(s-1))
+1=\frac{1}{2}(\deg(D+Q+\bar{Q}-P)-(\delta(D+Q+\bar{Q}-P))
+1$ and $D+Q+\bar{Q}-P$ is very special, impossible by Theorem
\ref{dim2}.
\end{proof}

\begin{prop}
\label{construction}
Let $D$ be a
very special divisor of degree $d$. Let $k=\ind (D)$.
Choose $k$ distinct real connected components $C_1,\ldots,C_k$ on
which the degree of the restriction of $D$ is odd. For $i=1,\ldots,k$, take
$(P_i,R_i)\in C_i\times C_i$. Take a real point $Q_j$ in each real
connected component on
which the degree of the restriction of $D$ is even. Then 
$$D'=D-\sum_{i=1}^k (P_i+R_i)-\sum_{j=1}^{\beta (D)}Q_j$$
is very special. Moreover, $\dim |D'|=\frac{1}{2}(d+\delta(D))-k-s+1$
and $\ind (D')=0$ if the points $P_i,R_i,Q_j$
are sufficiently general.
\end{prop}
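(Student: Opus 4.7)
The plan is to build $D'$ from $D$ by subtracting the prescribed closed points one at a time, tracking at each step both the parities of the restrictions to the real components and the dimension of the current complete linear system. For the parity bookkeeping, each pair $P_i+R_i\subset C_i$ has even degree on $C_i$ and hence leaves the parity of the restriction to $C_i$ unchanged, while each $Q_j$ lies on one of the $\beta(D)$ components where the restriction of $D$ has even degree and flips that parity to odd. Consequently every one of the $s$ real components of $X(\RR)$ ends up with odd restriction degree, which together with $\deg(D')=d-2k-\beta(D)$ and $\beta(D)=s-\delta(D)$ gives $\delta(D')=s$.

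For the dimension bookkeeping, I would use the elementary fact that for any nonempty complete linear system $|E|$ on $X$ and any closed point $P$ one has $\dim |E-P|\in\{\dim |E|-1,\dim |E|\}$, with the second value occurring precisely when $P$ is a base point of $|E|$. Since the base divisor of a linear system has finite support and each real connected component of $X(\RR)$ is a topological circle, hence infinite, one may choose inductively each of the points $P_1,R_1,\ldots,P_k,R_k,Q_1,\ldots,Q_{\beta(D)}$ inside the prescribed real component and outside the base divisor of the linear system available at that step. With such a sufficiently general choice each of the $2k+\beta(D)$ subtractions drops the dimension by exactly one, giving
$$\dim |D'|=\dim |D|-2k-\beta(D)=\frac{d-\delta(D)}{2}+k+1-2k-\beta(D)=\frac{d+\delta(D)}{2}-s-k+1.$$

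A direct calculation shows that $\frac{1}{2}(\deg(D')-\delta(D'))+1=\frac{d+\delta(D)}{2}-s-k+1$, so $\dim |D'|=\frac{1}{2}(\deg(D')-\delta(D'))+1$ in the generic case, which is exactly the statement that $D'$ is very special with $\ind(D')=0$. For an arbitrary (non-generic) choice, the same argument produces only the inequality $\dim |D'|\geq\frac{1}{2}(\deg(D')-\delta(D'))+1$, so $D'$ is still very special (with possibly larger index). That $D'$ is special in the first place is automatic, because $K-D'$ is linearly equivalent to $(K-D)$ plus an effective sum of points, hence $h^0(K-D')\geq h^0(K-D)\geq 1$.

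The main obstacle I expect is the preliminary check that at each intermediate step the current linear system is still nonempty, so that there is genuine room to pick a point in the prescribed real component avoiding the base divisor; this amounts to verifying $\dim |D|\geq 2k+\beta(D)$, which should follow by combining the very special hypothesis with Clifford's inequality and the Clifford-type bound $(\ast)$ of Theorem \ref{cliffreelB}. Once this is secured, the remainder of the argument is pure parity bookkeeping together with the one-line arithmetic verification above.
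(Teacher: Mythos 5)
There is a genuine gap, and it sits exactly at the point you flag as ``the main obstacle'': the claim that every intermediate linear system stays nonempty, which you reduce to $\dim |D|\geq 2k+\beta(D)$, i.e.\ to $d+\delta(D)\geq 2s+2k-2$. This does \emph{not} follow from Clifford plus Theorem \ref{cliffreelB}. Combining $(\ast)$ with $\dim|D|=\frac{1}{2}(d-\delta(D))+k+1$ only yields $\delta(D)\geq 2k+1+\frac{s}{2}$, and together with $d\geq\delta(D)$ this gives $d+\delta(D)\geq 4k+s+2$, which implies the needed bound only when $s\leq 2k+4$. Worse, the inequality you need is essentially Lemma \ref{condition1} (which states $d+\delta(D)\geq 2s+2k$), and in the paper that lemma is \emph{deduced from} this very proposition; so taking it as an input would be circular. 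Without it, your one-point-at-a-time subtraction can in principle hit an empty system partway through, after which $h^0(D')=0$ and neither the dimension formula nor very-specialness of $D'$ can be concluded.

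The paper closes this hole not by an a priori numerical bound but by ordering the subtractions so that very-specialness (hence $\dim\geq 1$, hence effectivity and $\deg\geq\delta$) is preserved at every intermediate stage. Concretely: first subtract only $P_1,\ldots,P_k$, one from each chosen odd component. Here $\delta$ drops by $k$ while the dimension drops by at most $k$, so $\dim|D_1|\geq\frac{1}{2}(\deg D_1-\delta(D_1))+1\geq 1$ and $D_1$ is very special of index $0$ for general $P_i$. From then on every remaining point ($R_i$, which now lies on a component where $D_1$ has \emph{even} restriction degree, and the $Q_j$) is subtracted from an even component: each such step lowers $\frac{1}{2}(\deg-\delta)$ by exactly $1$ and the dimension by at most $1$, so the very special inequality with the ``$+1$'' survives, and $h^0>0$ at the next step is guaranteed because the current system, being very special, has dimension $\geq 1$. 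This bootstrapping is the key idea missing from your argument; note also that your grouping by pairs $P_i+R_i$ would not preserve very-specialness step by step (the degree drops by $2$ on an odd component while $\delta$ is unchanged, so the index decreases), which is precisely why the paper separates the $P_i$ from the $R_i$. Your parity bookkeeping and the final arithmetic are correct, but the proof is incomplete until the nonemptiness issue is handled along these lines.
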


\begin{proof}
Assume $\dim |D|=r=\frac{1}{2}(d-\delta(D))+k+1$. We have $r\geq k+1$
since $D$ can be chosen effective.

We have $k\leq \frac{\delta(D)}{2}$ by Clifford Theorem.

Let $D_1=D-\sum_{i=1}^k P_i$. Then $\delta(D_1)=\delta (D)-k$ and 
$\dim |D_1|\geq r-k=\frac{1}{2}(d-\delta(D))+1-k=\frac{1}{2}(\deg
(D_1)-\delta(D_1))+1\geq 1$. We see that $D_1$ is very special and
$\ind(D_1)=0$ if the points $P_i$ are general.

If $\beta (D_1)=0$, the proof is done since $D_1$ is very special.
Assume $\beta(D_1)>0$. Let $P$ be a real point such that 
$P$ belongs to a connected component on
which the degree of the restriction of $D_1$ is even. Then 
$\dim |D_1-P|\geq \dim |D_1|-1\geq \frac{1}{2}(\deg
(D_1)-\delta(D_1))=\frac{1}{2}(\deg
(D_1-P)-\delta(D_1-P))+1$. Since $h^0 (D_1-P)\geq\dim |D_1| >0$ then
$D_1-P$ is linearly equivalent to an effective divisor and
therefore $\deg
(D_1-P)\geq \delta(D_1-P)$ and $\dim |D_1-P|\geq 1$ and $D_1-P$ is
very special. 
In the case $P$ is general, we get $\ind (D_1-P)=\ind (D_1)$.
By repeating the same reasoning for $D_1-P$, we prove the
proposition.
\end{proof}

The following lemma was proved differently in \cite[Lem. 3.3]{Mo2}.
\begin{lem}
\label{condition1}
Let $D$ be a very special divisor of degree $d$ and index $k$. Then 
$$d+\delta (D)\geq 2s+2k.$$
\end{lem}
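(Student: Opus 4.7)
The plan is to apply Proposition~\ref{construction} directly and then exploit the mild fact that every very special linear system has projective dimension at least one. Concretely, start with the given very special divisor $D$ of degree $d$ and index $k$, choose $k$ real components $C_1,\dots,C_k$ on which $D$ has odd restriction, and pick pairs $(P_i,R_i)\in C_i\times C_i$ together with one real point $Q_j$ in each even--restriction component, all sufficiently general. Proposition~\ref{construction} then produces a very special divisor
$$D'=D-\sum_{i=1}^{k}(P_i+R_i)-\sum_{j=1}^{\beta(D)}Q_j$$
with $\ind(D')=0$ and
$$\dim|D'|=\tfrac{1}{2}(d+\delta(D))-k-s+1.$$

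The second step is the elementary observation that any very special divisor $D'$ satisfies $\dim|D'|\geq 1$. By definition one has $\dim|D'|=\tfrac{1}{2}(\deg D'-\delta(D'))+\ind(D')+1$, and since $D'$ is effective each of the $\delta(D')$ odd--restriction real components must carry at least one point of (a representative of) $D'$, so $\deg D'\geq \delta(D')$; combined with $\ind(D')\geq 0$ this forces $\dim|D'|\geq 1$. Plugging this lower bound into the formula above gives
$$\tfrac{1}{2}(d+\delta(D))-k-s+1\geq 1,$$
which rearranges precisely to $d+\delta(D)\geq 2s+2k$.

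I do not expect any genuine obstacle: the construction in Proposition~\ref{construction} already does the strategic work of reducing to an index-zero situation whose degree and $\delta$ are completely determined by $d,\delta(D),k,s$, and the remaining input is the tautological lower bound $\dim|D'|\geq 1$. The only point worth double-checking is that Proposition~\ref{construction} is applicable under the stated hypotheses, but this is automatic since $k\leq \delta(D)/2$ by Clifford's theorem and $\dim|D|\geq k+1$ gives enough freedom to choose the auxiliary points generally.
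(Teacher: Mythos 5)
Your proposal is correct and follows essentially the same route as the paper: both apply Proposition~\ref{construction} to produce the index-zero very special divisor $D'$ with $\dim|D'|=\frac{1}{2}(d+\delta(D))-k-s+1$, invoke $\dim|D'|\geq 1$ (which holds for any very special divisor since it is effective and $\deg D'\geq\delta(D')$), and rearrange. The only cosmetic difference is that you quote the dimension formula from Proposition~\ref{construction} directly, whereas the paper re-derives it via $\dim|D|=2k+\beta(D)+\dim|D'|$.
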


\begin{proof}
By Proposition \ref{construction}, the divisor $D'=D-\sum_{i=1}^k
(P_i+R_i)-\sum_{j=1}^{\beta (D)}Q_j$ is very special and therefore 
$\dim |D'|\geq 1$. Choosing the points $P_i,R_i,Q_j$ sufficiently
general, we have $\dim |D|=2k+\beta (D)+\dim |D'|\geq
2k+s-\delta(D)+1$. We obtain 
$$\frac{1}{2}(d-\delta(D))+k+1\geq 2k+s-\delta(D)+1$$
i.e.$$d+\delta (D)\geq 2s+2k.$$
\end{proof}

\begin{cor}
\label{egalitecondition1}
Let $D$ be a very special divisor of degree $d$ and index $k$.
We have $d+\delta (D)= 2s+2k$ if and only if
$|D|$ is a pencil.
\end{cor}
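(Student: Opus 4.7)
The ``$\Leftarrow$'' direction is immediate from Proposition \ref{veryspecialpencil}: if $|D|$ is a pencil with $D$ very special, then $d=\delta(D)=s$ and $\ind(D)=0$, whence $d+\delta(D)=2s=2s+2k$.

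For the nontrivial direction ``$\Rightarrow$'', I would argue by strong induction on $n=\dim|D|$. A direct calculation using the equality $d+\delta(D)=2s+2k$ and the very-special formula $\dim|D|=\frac{1}{2}(d-\delta(D))+k+1$ rewrites $\dim|D|$ as $2k+\beta(D)+1$. The base case $n=1$ is the desired conclusion, and by Theorem \ref{dim2} the value $n=2$ is excluded. So it suffices to derive a contradiction from the assumption $n\geq 3$.

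The key step is to produce, by removing a single general real point from $D$, a strictly smaller very special divisor that still satisfies the equality $\deg+\delta=2s+2\cdot\ind$, so that the induction hypothesis forces a pencil (dimension~$1$) and contradicts $n-1\geq 2$ via Theorem \ref{dim2}. There are two cases. If $k\geq 1$, then $\delta(D)\geq 2k\geq 2$ (Clifford, as used in Proposition \ref{construction}), so I may pick a general real point $P$ lying on an ``odd'' component. Setting $D_1=D-P$, I would check that $\deg D_1+\delta(D_1)=d+\delta(D)-2=2s+2(k-1)$, that $\dim|D_1|=n-1$ (by generality $P$ is not a base point), and that the very-special formula forces $\ind(D_1)=k-1$. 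If instead $k=0$, then $\dim|D|=\beta(D)+1\geq 3$ gives $\beta(D)\geq 2$, hence an ``even'' component exists; removing a general real point $P$ there yields $D_2=D-P$ with $\deg D_2+\delta(D_2)=d+\delta(D)=2s$, $\dim|D_2|=n-1$, and $\ind(D_2)=0$. In both cases the induction hypothesis applied to $D_1$ (resp.\ $D_2$) forces $\dim=1$, contradicting $n-1\geq 2$.

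The main obstacle is genuinely just the bookkeeping: verifying that a general real point $P$ in the prescribed component actually (i) does not lie in the base locus of $|D|$, so that $\dim|D-P|=\dim|D|-1$, and (ii) leaves $D-P$ effective with the expected $\delta$ and the expected index under the very-special formula. Once this is checked, the argument is really driven by Theorem \ref{dim2}: the only way to satisfy the equality with a larger linear system would be to descend through a very special divisor of dimension~$2$, which is forbidden.
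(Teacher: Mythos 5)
Your proof is correct, but it closes the forward implication by a genuinely different mechanism than the paper. The paper applies Proposition \ref{construction} in one shot: under the hypothesis $d+\delta(D)=2s+2k$ the divisor $D'$ constructed there satisfies $\dim|D'|=\frac{1}{2}(d+\delta(D))-k-s+1=1$, so $|D'|$ is a very special pencil, hence primitive by Proposition \ref{veryspecialpencil}; since $\dim|D|=\dim|D'|+\deg(D-D')$ while $|K-D'|$ is base point free with $h^{0}(K-D')>0$, this forces $D=D'$ and hence $|D|$ is a pencil. You instead run a descent one real point at a time and let Theorem \ref{dim2} (no very special system of dimension $2$) supply the contradiction, never invoking primitivity in the forward direction. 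The underlying bookkeeping is the same in both arguments -- removing a general real point from an odd component drops the index by one, removing one from an even component preserves it, and either operation preserves the equality $\deg+\delta=2s+2\,\ind$ -- and the verifications you flag (a general real point of a prescribed component is not a base point, $D-P$ remains effective and special, the index comes out as claimed) are exactly those the paper carries out inside the proof of Proposition \ref{construction}, so they are sound. The paper's route is shorter given the machinery already in place; yours is more self-contained and makes explicit that the statement is ultimately driven by the exclusion of dimension $2$, which is also how the paper proves the companion statement, Proposition \ref{condition2}.
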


\begin{proof}
Assume $d+\delta (D)= 2s+2k$. The linear system $|D'|$ of Proposition
\ref{construction} is a very special pencil. Hence $|D'|$ is primitive
(Proposition \ref{veryspecialpencil})
and therefore $D'=D$.

The converse follows easily from Proposition \ref{veryspecialpencil}.
\end{proof}

We improve the result of Lemma \ref{condition1}.
\begin{prop}
\label{condition2}
Let $D$ be a very special divisor of degree $d$ and index $k$ such
that $|D|$ is not a pencil. Then 
$$d+\delta (D)\geq 2s+2k+4.$$
The very special divisor $D'$ constructed from $D$ in
Proposition \ref{construction} satisfies $$\dim |D'|\geq 3.$$
Moreover $d+\delta (D)= 2s+2k+4$ if and only if $\dim |D'|= 3.$
\end{prop}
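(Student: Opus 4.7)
The plan is to use Proposition \ref{construction} to produce a very special divisor $D'$ whose dimension is a linear function of $d+\delta(D)-2s-2k$, and then to push it above 2 by invoking Theorem \ref{dim2}, which forbids the value $\dim|D'|=2$ for a very special divisor.

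First, I apply Proposition \ref{construction} to $D$ with sufficiently general choices of the real points $P_i,R_i,Q_j$ to obtain a very special divisor $D'$ with
$$\dim|D'| = \tfrac{1}{2}(d+\delta(D)) - k - s + 1.$$
Since $\dim|D'|$ must be an integer and $k,s$ are integers, this already forces $d+\delta(D)$ to be even (a fact I will use in the parity step below). By Lemma \ref{condition1} we have $d+\delta(D)\geq 2s+2k$, which translates to $\dim|D'|\geq 1$; in other words $|D'|$ is at least a pencil.

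Next, since by hypothesis $|D|$ is not a pencil, Corollary \ref{egalitecondition1} rules out the equality $d+\delta(D)=2s+2k$. Combined with the parity observation, this gives $d+\delta(D)\geq 2s+2k+2$, equivalently $\dim|D'|\geq 2$. The key step is now to invoke Theorem \ref{dim2}: since $D'$ is very special, $\dim|D'|\neq 2$. Therefore $\dim|D'|\geq 3$, which translates back through the formula to
$$d+\delta(D) \geq 2s+2k+4,$$
and the equivalence $d+\delta(D)=2s+2k+4 \iff \dim|D'|=3$ is immediate from the same formula.

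There is no serious obstacle: the whole argument is a bootstrap from Lemma \ref{condition1} using the gap forbidden by Theorem \ref{dim2}. The only points requiring a small amount of care are the parity observation (needed to pass from the strict inequality $d+\delta(D)>2s+2k$ to $d+\delta(D)\geq 2s+2k+2$) and checking that the general choice of points in Proposition \ref{construction} is indeed available under the hypothesis that $|D|$ is not a pencil, both of which are bookkeeping.
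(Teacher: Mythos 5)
Your proposal is correct and follows essentially the same route as the paper: construct $D'$ via Proposition \ref{construction}, use Lemma \ref{condition1} together with the non-pencil hypothesis (Corollary \ref{egalitecondition1}) and parity to reach $d+\delta(D)\geq 2s+2k+2$, and then invoke Theorem \ref{dim2} to exclude $\dim|D'|=2$. Your explicit parity remark and the citation of Corollary \ref{egalitecondition1} actually make the step to $2s+2k+2$ cleaner than the paper's somewhat terse (and mis-referenced) version of the same argument.
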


\begin{proof}
Choosing the points $P_i,R_i,Q_j$ sufficiently
general in Proposition \ref{construction}, the linear system $|D'|$ 
is very special and $\dim |D'|=\frac{1}{2}(d+\delta(D))-k-s+1$.
Note that $|D'|$ is not primitive if $\delta(D)<s$ or $k>0$. 
By Lemma \ref{condition2}, we have 
$d+\delta (D)\geq 2s+2k+2$. If $d+\delta (D)= 2s+2k+2$ then 
$\dim |D'|=2$, impossible by Theorem \ref{dim2}.
Therefore $\dim |D'|\geq 3$ and 
we have $\dim |D|=2k+\beta (D)+\dim |D'|\geq
2k+s-\delta(D)+3$. We obtain 
$$\frac{1}{2}(d-\delta(D))+k+1\geq 2k+s-\delta(D)+3$$
i.e.$$d+\delta (D)\geq 2s+2k+4.$$
\end{proof}

\begin{prop}
\label{M-2} Let $X$ be an $(M-2)$-curve or an $(M-4)$-curve. 
If $X$ has a 
very special linear system
then it is a very special pencil $g_s^1$ or its residual $K-g_s^1$.
If $s=g-1$ then the residual of a very special pencil is also a very
special pencil.
\end{prop}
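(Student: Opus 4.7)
The plan is to argue by contradiction. Suppose $D$ is a very special divisor of degree $d$ and index $k$ on $X$ such that neither $|D|$ nor $|K-D|$ is a pencil; I will derive a numerical impossibility from the constraints $s\in\{g-1,g-3\}$.

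The key input is Lemma \ref{residuel}, which tells me $K-D$ is also very special of index $k$ (and it is special and effective since $h^0(K-D)>0$). As neither linear system is a pencil, Proposition \ref{condition2} applies to both $D$ and $K-D$. Using $\delta(K-D)=\delta(D)$ and $\deg(K-D)=2g-2-d$, the two inequalities read
$$d+\delta(D)\geq 2s+2k+4\quad\text{and}\quad (2g-2-d)+\delta(D)\geq 2s+2k+4.$$
Adding them and bounding $\delta(D)\leq s$ yields $g\geq s+2k+5$. For $s=g-1$ this forces $0\geq 2k+4$, and for $s=g-3$ it forces $0\geq 2k+2$, both impossible since $k\geq 0$. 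Hence $|D|$ or $|K-D|$ is a very special pencil, and Proposition \ref{veryspecialpencil} identifies any such pencil as a $g_s^1$; thus $|D|$ is either a $g_s^1$ or the residual $|K-g_s^1|$.

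For the final assertion, assume $s=g-1$ and let $|D|=g_s^1$ be a very special pencil. Riemann--Roch gives $h^0(K-D)=h^0(D)-\deg(D)+g-1=2-s+g-1=2$, so $|K-D|$ is again a pencil, and Lemma \ref{residuel} makes it very special. The step that most deserves care is the symmetric application of Proposition \ref{condition2} to the residual $K-D$, but this reduces cleanly to the two features recorded above: very-speciality transfers under residuation, and the ``not a pencil'' hypothesis on $K-D$ is part of our contradiction assumption.
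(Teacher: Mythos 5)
Your proof is correct and follows essentially the same route as the paper: both hinge on Lemma \ref{residuel} together with the inequality $d+\delta(D)\geq 2s+2k+4$ of Proposition \ref{condition2}, and both finish the last claim by the same Riemann--Roch computation. The only (harmless) difference is that you apply Proposition \ref{condition2} symmetrically to $D$ and $K-D$ and add, whereas the paper reduces to $d\leq g-1$ via residuation and uses the single bound $d+\delta(D)\leq (g-1)+s$.
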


\begin{proof}
Let $D$ be a very special divisor of degree $d$ and index $k$ on $X$.
By Lemma \ref{residuel}, we may assume $d\leq g-1$ and then 
$d+\delta (D)\leq 2g-2$ (resp. $\leq 2g-4$) if $s=g-1$ (resp. $s=g-3$).
If $|D|$ is not a pencil then $d+\delta (D)\geq 2g-2+2k+4$
(resp. $\geq 2g-6+2k+4$) if $s=g-1$ (resp. $s=g-3$),
impossible. Hence $|D|$ is a very special pencil and by Riemann-Roch,
$K-D$ is also a very special pencil in the case $s=g-1$.
\end{proof}

\begin{rema} The very special pencils give separating morphisms
  $X\rightarrow \PP^1$ in the sense of Coppens \cite{Co}.
\end{rema}

\begin{prop}
\label{M-3} Let $X$ be an $(M-3)$-curve or an $(M-5)$-curve. 
Then $X$ has no very special linear systems.
\end{prop}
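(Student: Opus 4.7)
The plan is to adapt the argument of Proposition~\ref{M-2}, splitting into the cases where $|D|$ is a pencil and where it is not: the non-pencil case will be eliminated by the sharper numerical bound of Proposition~\ref{condition2}, and the pencil case by a parity obstruction coming from Klein's theorem.

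Let $D$ be a very special divisor of degree $d$ and index $k$ on $X$, and use Lemma~\ref{residuel} to reduce to $d\leq g-1$. Combined with the trivial estimate $\delta(D)\leq s$ this yields $d+\delta(D)\leq (g-1)+s$, which equals $2g-3$ for an $(M-3)$-curve ($s=g-2$) and $2g-5$ for an $(M-5)$-curve ($s=g-4$).

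If $|D|$ is not a pencil, Proposition~\ref{condition2} provides $d+\delta(D)\geq 2s+2k+4\geq 2s+4$, so $d+\delta(D)\geq 2g$ in the $(M-3)$ case and $d+\delta(D)\geq 2g-4$ in the $(M-5)$ case, both of which contradict the above upper bounds. If instead $|D|$ is a pencil, Proposition~\ref{veryspecialpencil} writes it as $|P_1+\cdots+P_s|$ with the $P_i$ real points sitting one per connected component of $X(\RR)$, and $|D|$ base-point-free. By the remark immediately preceding this proposition, the associated morphism $\varphi_{|D|}:X\to\PP_{\RR}^1$ is separating in the sense of Coppens, so $a(X)=0$, and Klein's theorem then demands $s\equiv g+1\pmod 2$; but $s=g-2$ and $s=g-4$ both satisfy $s\equiv g\pmod 2$, a contradiction.

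The main thing to be careful about is that the $+4$ strengthening of Proposition~\ref{condition2} is indispensable in the non-pencil case: the earlier bound of Lemma~\ref{condition1} would only yield $d+\delta(D)\geq 2g-8$ in the $(M-5)$ setting, which is consistent with $d+\delta(D)\leq 2g-5$ and would leave the non-pencil case open. With that improvement in hand the remainder of the argument is purely numerical bookkeeping together with the single parity argument on $s$.
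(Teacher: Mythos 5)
Your proof is correct and follows essentially the same route as the paper: reduce to $d\leq g-1$ via Lemma~\ref{residuel}, kill the non-pencil case with the bound $d+\delta(D)\geq 2s+2k+4$ of Proposition~\ref{condition2}, and kill the pencil case because a very special pencil forces $a(X)=0$, which is incompatible with the parity of $s$ for an $(M-3)$- or $(M-5)$-curve. You merely spell out the parity argument via Klein's theorem, which the paper leaves implicit in the phrase ``then $a(X)=0$, impossible''.
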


\begin{proof}
Let $D$ be a very special divisor of degree $d$ and index $k$ on $X$.
By Lemma \ref{residuel}, we may assume $d\leq g-1$ and then 
$d+\delta (D)\leq 2g-3$ (resp. $\leq 2g-5$) if $s=g-2$ (resp. $s=g-4$).
If $|D|$ is not a pencil then $d+\delta (D)\geq 2g-4+2k+4$
(resp. $\geq 2g-8+2k+4$) if $s=g-2$ (resp. $s=g-4$),
impossible. Hence $|D|$ is a very special pencil and then $a(X)=0$,
impossible.
\end{proof}

From the above results we get:
\begin{thm}
\label{s>=g-4}
Let $X$ be a real curve such that $s\geq g-4$. If $X$ has a very
special linear system then $X$ has a very special pencil.
\end{thm}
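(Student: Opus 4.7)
The plan is to prove this theorem as a direct synthesis of the results already established for each value of $s$ in the range $g-4 \leq s \leq g+1$. The hypothesis $s \geq g-4$ splits into six cases according to $s \in \{g-4, g-3, g-2, g-1, g, g+1\}$, and each case is already handled (either directly or vacuously) by one of the earlier statements.

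First I would dispose of the cases in which the hypothesis is vacuous. When $s = g+1$ ($M$-curve) or $s = g$ ($(M-1)$-curve), Huisman's Theorem \ref{Huisman} shows that every effective special divisor satisfies (Clif 1), so there are no very special linear systems at all and there is nothing to prove. Similarly, Proposition \ref{M-3} eliminates the cases $s = g-2$ (an $(M-3)$-curve) and $s = g-4$ (an $(M-5)$-curve): such curves carry no very special linear system, so again the statement holds vacuously.

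Next I would handle the two remaining cases, $s = g-1$ and $s = g-3$, by invoking Proposition \ref{M-2}. That proposition tells us that any very special linear system on an $(M-2)$- or $(M-4)$-curve is either a very special pencil $g_s^1$ or its residual $K - g_s^1$. In the first case we already have the desired very special pencil. In the second case, Lemma \ref{residuel} shows that the residual of a very special divisor is very special of the same index; since $K - (K - g_s^1) = g_s^1$, the pencil $g_s^1$ is itself very special, and again $X$ carries a very special pencil.

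There is no real obstacle here: the work has already been done in Propositions \ref{M-2} and \ref{M-3} (which themselves rest on the degree bound of Proposition \ref{condition2}) and in Theorem \ref{Huisman}. The theorem is essentially a bookkeeping statement that gathers these six cases into one clean assertion, and the only thing to check carefully is that the $(M-4)$ case genuinely produces a very special pencil even when the given very special system happens to be the residual $K - g_s^1$, which is immediate from Lemma \ref{residuel}.
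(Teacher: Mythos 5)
Your proof is correct and is essentially the paper's own argument: the paper states this theorem with no separate proof, presenting it as an immediate consequence of Theorem \ref{Huisman} (for $s=g,g+1$), Proposition \ref{M-2} (for $s=g-1,g-3$) and Proposition \ref{M-3} (for $s=g-2,g-4$), which is exactly your case split. Your explicit remark that the residual case of Proposition \ref{M-2} still yields the very special pencil $g_s^1$ via Lemma \ref{residuel} is a correct and slightly more careful rendering of what the paper leaves implicit.
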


\section{Very special webs}

\begin{prop}
\label{dim3}
Let $D$ be a very special divisor of degree $d$ and index $k$ such
that $\dim |D|=3$. Then $|D|$ is base point free, $d=s+4$, $k=0$ and 
$\delta (D)=s$.  
\end{prop}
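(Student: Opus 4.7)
The plan is to extract the numerical data ($k=0$, $\delta(D)=s$, $d=s+4$) by comparing two formulas for the dimension of the auxiliary divisor $D'$ provided by Propositions \ref{construction} and \ref{condition2}, and then bootstrap the base-point-free claim by applying the same argument to the base-point-free part of $|D|$.

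Since $\dim |D|=3>1$, the system $|D|$ is not a pencil, so Proposition \ref{condition2} applies and gives $\dim |D'|\geq 3$ for the auxiliary divisor $D'$ constructed from $D$ as in Proposition \ref{construction}. On the other hand, the construction itself records $\dim |D|=2k+\beta(D)+\dim |D'|$, so $\dim |D'|=3-2k-\beta(D)$. Comparing these two expressions forces $2k+\beta(D)\leq 0$; since both summands are non-negative, $k=0$ and $\beta(D)=0$. The latter, together with $\delta(D)+\beta(D)=s$, yields $\delta(D)=s$, and substituting into the very special dimension formula $\dim |D|=\frac{1}{2}(d-\delta(D))+k+1=3$ gives $d=s+4$.

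It remains to show $|D|$ is base point free. I would argue by contradiction: suppose $|D|$ has base divisor $F\neq 0$, and let $|D-F|$ be its base-point-free part. By Lemma \ref{basepointfreepart}, $D-F$ is again very special with $\ind(D-F)\geq 0$, and trivially $\dim |D-F|=\dim |D|=3$. Applying the argument of the previous paragraph to $D-F$ in place of $D$ yields $\delta(D-F)=s$ and $\deg(D-F)=s+4$. But $\deg(D-F)=d-\deg F=s+4-\deg F$, so $\deg F=0$, contradicting $F\neq 0$.

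I do not anticipate a serious obstacle: the only delicate point is keeping careful track, via Lemma \ref{basepointfreepart}, of how $\delta$, $\beta$, and the index transform when one removes the base divisor, and then noticing that the bootstrap (applying the already-established numerical conclusions to $D-F$) directly rules out a nonzero base divisor.
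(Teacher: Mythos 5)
Your argument is correct and is essentially the paper's own proof: both derive $k=0$ and $\beta(D)=0$ (hence $D'=D$, $\delta(D)=s$, $d=s+4$) by comparing $\dim|D'|=\dim|D|-2k-\beta(D)$ from Proposition \ref{construction} with the bound $\dim|D'|\geq 3$ of Proposition \ref{condition2}, and both then rule out a base divisor by applying the same numerical conclusions to the base-point-free part via Lemma \ref{basepointfreepart}. No changes needed.
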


\begin{proof}
We have $D'=D$ in Proposition \ref{construction} and thus
$d+\delta (D)= 2s+2k+4$.
Since $D=D'$ then we have $k=0$ and 
$\delta (D)=s$. Therefore, we obtain $d=s+4$. 
By Proposition \ref{basepointfreepart}, the base point free part of $|D|$ 
is also very special of dimension $3$, hence its degree is $s+4$ and
thus $D$ is base point free.
\end{proof}

The following proposition is important in the sequel. We give new 
examples of very special simple linear systems. This proposition was
inspired by M. Coppens.
\begin{prop}
\label{quadrique}
Let $|D|$ be a very special simple linear system of degree $d$ 
such that $\dim |D|=3$ and 
$X'=\varphi_{|D|}(X)\subset \PP^3$ is contained in an 
irreducible real quadric surface $Q$.
Then 
\begin{description}
\item[({\cal{i}})] The rank of $Q$ is $4$ and 
$Q\simeq \PP_{\RR}^1\times \PP_{\RR}^1$.
\item[({\cal{ii}})] $X'$ is a curve of bi-degree $(s,4)$ on $Q$.
\item[({\cal{iii}})] $|D|=g_s^1+g_4^1$, $g_s^1$ and $g_4^1$ 
are the pull-backs of 
the linear pencils on $X'$ cut out by the rulings of $Q$.
\item[({\cal{iv}})] $g_s^1$ is a very special pencil and $\delta
  (g_4^1)=0$. 
\item[({\cal{v}})] $s\geq 3$ and in the case $s=3$ 
  then $X'$ is smooth and $|D|=K-g_3^1$.
\item[({\cal{vi}})] If $s=4$ and $d\leq g-1$ then $X'$ is smooth.
\end{description}
\end{prop}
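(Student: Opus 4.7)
By Proposition~\ref{dim3}, we have $d=s+4$, $\delta(D)=s$, $\ind(D)=0$, and $|D|$ is base point free. For item (i), irreducibility of $Q$ leaves two possibilities: rank $3$ (a cone) or rank $4$ (smooth). To rule out the rank-$3$ case, one analyzes the structure of $X'$ on a cone with vertex $v$: then $D\sim 2g_e^1$ on $X$ where $g_e^1$ is the pullback of the ruling, and a Riemann--Roch computation of $h^0(2g_e^1)$, combined with $\dim|D|=3$, completeness and simplicity of $|D|$, leads to a contradiction (either $\varphi_{|D|}$ factors through $X\to\PP^1$, contradicting simplicity, or $X'$ lies in a $\PP^2$, contradicting non-degeneracy). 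In the rank-$4$ case, if $Q$ were the non-split smooth real form (rulings swapped by Galois), then complex conjugate pencils $|A|$, $|\overline A|$ on $X_\CC$ would satisfy $|A+\overline A|=|D_\CC|$; for any $E\in|A|$, the real divisor $E+\overline E$ has even multiplicity at every real point, so its $\delta$-invariant is $0$, contradicting $\delta(D)=s\geq 2$. Hence $Q\simeq\PP^1_\RR\times\PP^1_\RR$.

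\emph{Pencils and parities (items (ii)--(iv)).} The two real rulings cut out pencils $g_m^1,g_n^1$ on $X'$ with $m+n=s+4$ and, since $H=(1,1)$ on $Q$, $|D|=|g_m^1+g_n^1|$, which yields (iii). On each of the $s$ real components where $\deg(D|_C)$ is odd, the parities of $\deg(g_m^1|_C)$ and $\deg(g_n^1|_C)$ are complementary, so $\delta(g_m^1)+\delta(g_n^1)=s$. Combined with $\deg\equiv\delta\pmod 2$ for effective classes, the pair $(m-\delta(g_m^1),n-\delta(g_n^1))$ consists of non-negative even integers summing to $4$, leaving the three cases $(0,4)$, $(2,2)$, $(4,0)$. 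Case $(2,2)$ is excluded using the very-special hypothesis on $|D|$: when $\min(m,n)=2$, the pencil is hyperelliptic and $|D|$ would be compounded with it, contradicting simplicity; the remaining sub-cases are excluded by a Clifford-type analysis together with the very-special condition on $|D|$. In the surviving cases, up to labelling $m-\delta(g_m^1)=0$: then $g_m^1$ violates (Clif 1) and is very special, and Corollary~\ref{egalitecondition1} forces $m=\delta(g_m^1)=s$, whence $n=4$ and $\delta(g_n^1)=0$. This yields (ii) and (iv).

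\emph{Genus analysis (items (v) and (vi)).} If $s=2$, Theorem~\ref{siegalite} would give $|D|=3g_2^1$, contradicting simplicity, so $s\geq 3$. For $s=3$, the bi-degree $(3,4)$ curve on $Q$ has arithmetic genus $p_a=(3-1)(4-1)=6$, while $g\geq s+2=5$ from $h^0(K-D)\geq 1$; excluding $g=5$ (by checking that $|D|=|K-P|$ on a trigonal genus-$5$ curve cannot simultaneously be simple of dimension $3$ with $\delta(D)=3$) forces $g=6$, so $X'$ is smooth, and Riemann--Roch gives $\deg(K-D)=3$, $h^0(K-D)=2$, i.e.\ $|K-D|=g_3^1$ and therefore $|D|=K-g_3^1$. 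For $s=4$ under $d\leq g-1$, we have $g\geq s+5=9$ while $p_a=(4-1)(4-1)=9$, so $g=9$ and $X'$ is smooth.

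The chief obstacles will be ruling out the rank-$3$ cone case in the identification of $Q$, excluding the intermediate parity case $(2,2)$ in the bi-degree analysis (which may break into several sub-cases), and eliminating $g=5$ in the $s=3$ configuration; each step demands a careful deployment of the structural results on very special divisors established in the preceding sections.
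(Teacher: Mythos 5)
Your overall architecture matches the paper's (eliminate the cone and the non-split quadric, then a parity/bidegree analysis on $\PP^1_{\RR}\times\PP^1_{\RR}$, then the genus formula for (v)--(vi)), and several of your shortcuts are sound — in particular using $\delta(D)=s$ to kill the $(1,1)$-type components immediately, and invoking Theorem \ref{siegalite} for $s=2$. But two steps as you describe them would fail. First, the rank-$3$ case: from $D\sim 2g_e^1$ you claim a contradiction with simplicity or non-degeneracy, but neither follows. A non-hyperelliptic genus-$4$ curve with a unique $g_3^1$ is canonically embedded in a quadric cone with $K\sim 2g_3^1$, complete, simple, non-degenerate and of dimension $3$; so your dichotomy is simply not forced. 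The contradiction must come from the very-special hypothesis: $D\sim 2g_e^1$ has even degree on every real component, so $\delta(D)=0$, whereas $\delta(D)=s>0$ (this is exactly the paper's one-line argument, which also disposes of the $S^2$-form as you do).

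Second, and more seriously, the exclusion of the parity case $(m-\delta(g_m^1),\,n-\delta(g_n^1))=(2,2)$ is the heart of the proposition and your sketch has no workable argument for it. Your reduction "when $\min(m,n)=2$ the pencil is hyperelliptic" only touches a sub-case (e.g. $s=7$, $m=5$, $\delta(g_m^1)=3$, $n=6$, $\delta(g_n^1)=4$ is in case $(2,2)$ with $\min(m,n)=5$), and "a Clifford-type analysis together with the very-special condition" cannot close it: the identity $\dim|D|=\frac{1}{2}(d-\delta(D))+1$ is automatically consistent with $(2,2)$ and yields no contradiction. The paper's actual mechanism is topological: with $\delta(g_m^1)=m-2$ and $\delta(g_n^1)=n-2$, every component of type $(1,0)$ on $Q(\RR)$ must meet every component of type $(0,1)$, so the genus formula degrades to $g\le mn-m-n+1-(m-2)(n-2)=s+1$, which contradicts Theorem \ref{Huisman} and Proposition \ref{M-2}. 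Without this idea (or a substitute) the bidegree $(s,4)$ in (ii), and hence (iii)--(vi), is not established. The remainder of your plan (forcing $m=\delta(g_m^1)=s$ via Corollary \ref{egalitecondition1} in the surviving cases, and the genus computations for (v) and (vi)) is essentially the paper's and is fine.
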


\begin{proof}
By Proposition \ref{dim3}, the degree $d$ of $D$ is $s+4$,
$\delta(D)=s$ and 
$\ind (D)=0$.

If the rank of $Q$ is $3$, then $|D|=|2F|$ where $|F|$ 
is the pencil induced by the ruling of $Q$. 
This case is not possible since $\delta (D)\not= 0$ by Clifford Theorem.

Thus $Q$ is smooth. Assume $Q(\RR)\simeq S^2$. 
The rulings of $Q$ induce complex and conjugated 
pencils $|F|$ and $|\bar{F}|$ on $X$ such that 
$|D|=|F+\bar{F}|$; this is again impossible since $\delta (D)\not= 0$.

We have $Q\simeq \PP_{\RR}^1\times \PP_{\RR}^1$ and $|D|=g_a^1+g_b^1$ 
where $g_a^1$ and $g_b^1$ are induced by the real rulings of $Q$.
The hyperplane section $H$ giving the embedding 
$Q\hookrightarrow \PP^3$ is of bi-degree $(1,1)$
on $Q$
and then $X'=\varphi_{|D|}(X)\subset \PP^3$ is a curve of bi-degree
$(a,b)$ on $Q$. Moreover, $X$ and $X'$ are birational since $|D|$ is simple.
We get $$a+b=s+4.$$
We have $H_1(Q(\RR),\ZZ/2)=\ZZ/2\times\ZZ/2$ and the possible 
types of the image of the connected components of $X(\RR)$ are 
$(0,0)$, $(1,0)$,
$(0,1)$ and $(1,1)$. Let $a'$, $b'$ and $c'$ be respectively the 
number of connected components of type $(1,0)$, $(0,1)$ and $(1,1)$.
We have $a'+b'=\delta (D)=s$, $a'+c'=\delta (g_a^1)$ and $b'+c'=\delta (g_b^1)$.

Suppose $a'\leq a-2$ and $b'\leq b-2$. Since $a'+b'=s$ and $a+b=s+4$,
we get $a'= a-2$ and $b'= b-2$. Since a connected component of
type $(1,0)$ intersect a connected component of type $(0,1)$, if $g$
denote the genus of $X$, we get $$g\leq ab-a-b+1-(a-2)(b-2)$$
i.e. $$g\leq a+b-3=s+1,$$
which is impossible by Theorem \ref{Huisman} and Proposition \ref{M-2}.

So we can assume $a'=a$ i.e. $g_a^1$ is a very special pencil. We have
$a=s$ and it follows that $b=4$ and $b'=c'=0$. If $s\leq 2$ then $g\leq
3$ by the genus formula and this is again impossible by the
propositions \ref{M-2} and \ref{M-3}. 

Assume $s=3$. Let $\mu$ denote the multiplicity of the singular
locus of $X'$. We have $g=6-\mu$ by the genus formula.
Since $D$ is special, we have $\dim |D|=3>d-g=1+\mu$ i.e. $\mu\leq 1$.
If $\mu =1$, then $s$ and $g$ have the same parity, impossible since
$a(X)=0$ ($X$ has a very special pencil). Thus $X'$ is smooth and $X$ is
an $(M-4)$-curve. 
By Proposition \ref{M-2}, $|D|$ is residual to a very special $g_3^1$.
Since $X$ has a simple very special linear system, $X$ cannot be
hyperelliptic and $X$ is trigonal with a unique $g_3^1$ ($g>4$)
such that $|D|=g_3^1+g_4^1$.

Assume now $s=4$ and $d=s+4=8\leq g-1$. 
Let $\mu$ denote the multiplicity of the singular
locus of $X'$. We have $g=9-\mu$ by the genus formula and it follows
that $\mu=0$.
\end{proof}

We study the converse of the previous proposition.
\begin{prop}
\label{quadrique2}
Let $X$ be a smooth curve of bidegree $(s,4)$ on a hyperbolo\"{\i}d
$Q\subset \PP_{\RR}^3$ with $s\geq 3$ denoting the number of connected
components of $X(\RR)$ and such that all the connected components of
$X(\RR)$ are of type $(1,0)$. Then the embedding $X\hookrightarrow
\PP^3$ is given by a simple very special linear system
$|D|=g_s^1+g_4^1$. 
\end{prop}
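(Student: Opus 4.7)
The plan is to verify, in succession, that the hyperplane section $D$ on $X\subset\PP_{\RR}^3$ has degree $s+4$, that $|D|=g_s^1+g_4^1$ with $\dim|D|=3$, that $D$ is special, that $\delta(D)=s$, that $D$ violates the inequality (Clif 1), and finally that $|D|$ is simple.

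First, by adjunction on the quadric $Q\simeq\PP_{\RR}^1\times\PP_{\RR}^1$ applied to the smooth bidegree $(s,4)$ curve $X$, its genus is $g=(s-1)(4-1)=3(s-1)$, and since the hyperplane class on $Q$ has bidegree $(1,1)$, we have $d:=\deg D=s+4$. The two projections $\pi_1,\pi_2:X\to\PP_{\RR}^1$ cut out on $X$ the pencils induced by the rulings of $Q$, and from the identification $\O_Q(1,1)=\pi_1^*\O(1)\otimes\pi_2^*\O(1)$ one obtains $D\sim g_s^1+g_4^1$ as divisor classes, so $|D|=g_s^1+g_4^1$.

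Next I would compute $h^0(D)$ from the twisted ideal sequence of $X$ in $Q$:
$$0\to\O_Q(1-s,-3)\to\O_Q(1,1)\to\O_Q(1,1)|_X\to 0.$$
For $s\geq 3$, the K\"unneth formula yields $H^0(\O_Q(1-s,-3))=H^1(\O_Q(1-s,-3))=0$, since in each tensor factor on $\PP^1$ either $h^0$ or $h^1$ of the relevant line bundle vanishes in the required range. Combined with $h^0(\O_Q(1,1))=4$, the associated long exact sequence gives $h^0(D)=4$, hence $\dim|D|=3$. Riemann-Roch then gives $h^0(K-D)=h^0(D)-(d-g+1)=4-(8-2s)=2s-4\geq 2$, so $D$ is special.

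The hypothesis that all $s$ connected components of $X(\RR)$ are of type $(1,0)$ in $H_1(Q(\RR),\ZZ/2)$, combined with the class $(1,1)$ of a real hyperplane section on $Q$, gives via the mod-$2$ intersection pairing on the torus $Q(\RR)=S^1\times S^1$ that $\deg(D|_C)$ is odd for every real component $C$; hence $\delta(D)=s$. Therefore $\dim|D|=3>2=\frac{1}{2}(d-\delta(D))$, so $D$ violates (Clif 1) and is very special. Finally $|D|$ is simple because $\varphi_{|D|}$ is precisely the given embedding $X\hookrightarrow\PP_{\RR}^3$, which is in fact an isomorphism onto its image. The only substantive step is the cohomology computation yielding $h^0(D)=4$; once this and $\delta(D)=s$ are in hand, the remaining verifications are immediate from Riemann-Roch and the definition of a very special divisor.
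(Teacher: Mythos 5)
Your proof is correct and follows essentially the same route as the paper: compute $g=3(s-1)$ by adjunction on $Q$ and deduce speciality from $\dim|D|=3>d-g$ for $s>2$, with the type $(1,0)$ hypothesis forcing $\delta(D)=s$ and hence the failure of (Clif~1). The only difference is that you explicitly verify $h^0(D)=4$ via the ideal sheaf sequence and the K\"unneth formula, a step the paper treats as clear; this is a worthwhile addition but not a different approach.
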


\begin{proof}
The embedding $X\hookrightarrow
\PP^3$ is clearly given by a simple linear system
$|D|=g_s^1+g_4^1$ with $\delta (g_s^1)=s$ and $\delta (g_4^1)=0$.
It remains to show that $D$ is special. Since $X$ is smooth, we have
$g=3s-3$ and thus $D$ is special if $\dim |D|=3>d-g=s+4-3s+3$ i.e. $s>2$.
\end{proof}

\begin{rema} The existence of curves of bidegree $(s,4)$ with
  prescribed types of 
  Proposition \ref{quadrique2} for the real connected
  components is proved by Zvonilov \cite{Zv} for $s=3$ and $s=4$.
\end{rema}

\section{Curves with a small number of real connected components}

We study the existence of very special linear systems on real curves with
$s\leq 4$.

The following theorem summarizes all the results proved in this
section. 

\begin{thm} 
\label{specialimplpinceau}
Let $X$ be real curve with $s\leq 4$. If $X$ has a
  very special linear system then $X_{\CC}$ is $s$-gonal and
  $X$ has a very special pencil.
\end{thm}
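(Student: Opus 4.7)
The plan is to extract a very special pencil from any given very special linear system $|D|$; once such a pencil is produced, it has degree $s$ by Proposition~\ref{veryspecialpencil}, so $X_{\CC}$ admits a $g_s^1$ and therefore ``is $s$-gonal'' (the matching lower bound on complex gonality is a secondary check using topological properties of $\delta(g_n^1)$ for small-gonality real curves).

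After the standard reductions --- $s\geq 2$ by Theorem~\ref{cliffreelA}, $d:=\deg D\leq g-1$ by Lemma~\ref{residuel}, $|D|$ base-point-free by Lemma~\ref{basepointfreepart}, and $g\geq s+5$ (else Theorem~\ref{s>=g-4} immediately concludes) --- I distinguish by $r:=\dim|D|$. The cases $r=1$ and $r=2$ are settled by Proposition~\ref{veryspecialpencil} and Theorem~\ref{dim2}. For $r=3$, Proposition~\ref{dim3} gives $d=s+4$, $k=0$, $\delta(D)=s$; if $|D|$ is non-simple, Theorem~\ref{nonsimple}(vi) supplies a very special pencil, and if $|D|$ is simple, Clifford applied to $2D$ yields
\[
h^0(2D)\leq s+5\leq 9<10=h^0(\PP_{\RR}^3,\O(2)),
\]
so $\varphi_{|D|}(X)$ lies on a real quadric; irreducibility and non-degeneracy of $\varphi_{|D|}(X)$ force this quadric to be irreducible over $\CC$, and Proposition~\ref{quadrique} then expresses $|D|=g_s^1+g_4^1$ with $g_s^1$ a very special pencil.

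For $r\geq 4$, I first apply Proposition~\ref{construction}: if $k(D)\geq 1$ or $\beta(D)\geq 1$, subtracting $2k+\beta(D)$ general points produces a very special $|D'|$ with $k=0$, $\delta(D')=s$, and $\dim|D'|<r$. Either $\dim|D'|\in\{1,3\}$, in which case earlier arguments apply, or $\dim|D'|\geq 4$ and $|D'|$ is ``stuck'' (construction no longer reduces it). So in every case I reach either a very special pencil, a very special divisor of dimension $3$, or a base-point-free very special divisor of dimension $\geq 4$ with $k=0$, $\delta=s$. In the last (stuck) configuration, if the divisor is non-simple, Theorem~\ref{nonsimple} supplies the pencil, while if simple, the birational embedding $X\hookrightarrow X'\subset\PP_{\RR}^r$ has degree $d=2r+s-2\in[2r,2r+2]$ (from $r>(d-s)/2$ combined with Clifford's inequality $d\geq 2r$), and Castelnuovo's bound $g\leq\pi(d,r)$ contradicts the standing $g\geq d+1$ for every $r\geq 4$.

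The main obstacle is this simple stuck case at $r\geq 4$: the paper's tools (notably Proposition~\ref{quadrique}) treat $r=3$ elegantly via curves on quadric surfaces, but there is no equally internal analogue for $\PP_{\RR}^r$ with $r\geq 4$, so Castelnuovo's external genus bound appears essential. A secondary subtlety is upgrading ``$X_{\CC}$ has gonality $\leq s$'' to ``$X_{\CC}$ is $s$-gonal'' (gonality \emph{exactly} $s$): this requires a topological check that on real curves whose complex gonality is $<s$, the canonical $g_n^1$ has $\delta=0$, so that no very special $rg_n^1$ nor small perturbation thereof can exist.
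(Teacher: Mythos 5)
Your route to the very special pencil is sound and genuinely different in organization from the paper's. The paper argues separately for $s=2,3,4$ (Propositions \ref{s=2}, \ref{s=3} and Theorem \ref{s=4}), in each case using Clifford's theorem to force $\ind(D)=0$ and $\delta(D)\in\{s-1,s\}$, then splitting according to $d\leq g-1$ versus $d\geq g$; the case $d\geq g$ is handled by Proposition \ref{criteresimple} (resting on the uniform position estimate $\dim|2D|\geq 3r-1$), which your proof avoids entirely by passing to the residual at the outset. Your induction on $r$ with Proposition \ref{construction} as the normalizing step, Castelnuovo killing the simple case for $r\geq 4$, and the quadric argument at $r=3$ is a clean alternative; the paper instead reaches the quadric through extremality in Castelnuovo's bound at $r=3$, $d=8$ (via Beauville's lemma). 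Two points to tighten: after applying Proposition \ref{construction} the resulting $|D'|$ need not be base point free, so the reduction must be iterated (pass to the base point free part by Lemma \ref{basepointfreepart}, whose index or $\beta$ may again be positive, and reapply the construction; the degree strictly drops, so this terminates); and the bound $h^0(2D)\leq s+5$ should also be checked when $2D$ is non-special (Riemann--Roch together with $g\geq s+5$ gives $h^0(2D)\leq s+4$ there, so nothing breaks).

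The genuine gap is in the claim that $X_{\CC}$ is exactly $s$-gonal. You obtain gonality $\leq s$ from the degree-$s$ pencil (Proposition \ref{veryspecialpencil}), but your proposed lower-bound argument --- that a real curve of complex gonality $n<s$ has $\delta(g_n^1)=0$, so that no very special $r\,g_n^1$ or perturbation thereof can exist --- fails on both counts. First, the only constraint is the parity $\delta(g_n^1)\equiv n \pmod 2$ together with $\delta\le\min(n,s)$; for instance a trigonal curve with $s=4$ has $\delta(g_3^1)\in\{1,3\}$, never $0$. Second, even knowing $\delta(g_n^1)$, one must show that \emph{every} very special system on such a curve is controlled by the $g_n^1$; this is precisely the content of Theorem \ref{trigonal} (via the Maroni/scrollar-invariant argument of \cite{CKM}) and of the hyperelliptic result \cite[Prop.~3.10]{Mo1} that the paper invokes: a hyperelliptic real curve carrying a very special system has $s=2$, and a curve with $X_{\CC}$ trigonal carrying one has $s=3$. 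You need to cite or reprove these two facts to rule out complex gonality $<s$; the ``small perturbation'' remark does not substitute for them.
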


By Theorem \ref{s>=g-4}, the same conclusion can be
drawn if $s\geq g-4$.

An open question is to know if the statement of Theorem
\ref{specialimplpinceau} is correct without any hypothesis on $s$. If the
answer to this question is the affirmative then very special linear series
will only exist on separating real curves.

By Theorem \ref{cliffreelA}, we only have to consider curves with
$2\leq s\leq 4$.

We will use several times the following proposition:
\begin{prop}
\label{criteresimple}
Let $|D|$ be a base point free, simple, and very special linear system
of degree $d$ and index $k$ such that $d\geq g$. Then $$\dim |K-D|\leq
\delta (D)-2k-2.$$
\end{prop}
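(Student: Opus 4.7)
My approach is to translate the claim into an equivalent inequality and then apply Proposition~\ref{condition2} to the residual $|K-D|$. Writing $r=\dim|D|=\tfrac{1}{2}(d-\delta(D))+k+1$, Riemann--Roch gives
$$\dim|K-D|=g+k-\tfrac{1}{2}(d+\delta(D)),$$
so the statement is equivalent to
$$d+3\delta(D)\geq 2g+6k+4.\qquad (\star)$$
By Lemma~\ref{residuel}, $|K-D|$ is very special of the same index $k$ with $\delta(K-D)=\delta(D)$, which makes it amenable to the same machinery we applied to $|D|$.

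Since $|D|$ is simple, base-point-free, and $g\geq 1$, one has $\dim|D|\geq 3$: a simple pencil would give a birational map $X\to\PP_{\RR}^1$ forcing $g=0$, and $\dim|D|=2$ is excluded by Theorem~\ref{dim2}. In particular $|D|$ is not a pencil. For $|K-D|$ there are two cases. If $|K-D|$ is a pencil, then Proposition~\ref{veryspecialpencil} gives $k=0$, $\delta(D)=s$, $d=2g-2-s$, and $(\star)$ reduces to $s\geq 3$; the sub-case $s=2$ would give $d=2g-4$ with $\dim|D|=d/2$, so Clifford equality on $X_{\CC}$ would force $X$ hyperelliptic and $|D|$ a multiple of the hyperelliptic $g_2^1$, contradicting simplicity. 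If $|K-D|$ is not a pencil, Proposition~\ref{condition2} applied to $|K-D|$ yields
$$\delta(D)\geq d+2s+2k+6-2g,$$
and combining with $d\geq g$ together with the Clifford bound $\delta(D)\geq 2k+2$ on very special divisors gives $(\star)$ whenever $g\leq (3s+7)/2$.

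The remaining obstacle is the high-genus regime $g>(3s+7)/2$, where the first-pass inequalities leave a gap. Here I plan to exploit simplicity of $|D|$ more structurally: the subcase $D\sim K$ gives $(\star)$ with equality (since then $\dim|K-D|=0$ and $\delta(D)=\delta(K)=2k+2$); otherwise, Theorem~\ref{nonsimple} implies that if the base-point-free part of $|K-D|$ were non-simple---the typical situation for very special systems of small degree $\leq g-2$---it would be compounded of a degree-$2$ involution $X\to X''$, and I expect this structure to be incompatible with the birationality of $\varphi_{|D|}$ because the double cover would pull back to produce extra sections of $|D|$ forcing $|D|$ itself to factor. Making this incompatibility precise will be the technical heart of the proof.
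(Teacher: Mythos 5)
Your reduction of the statement to the inequality $d+3\delta(D)\geq 2g+6k+4$ is correct (via Lemma \ref{residuel} and Riemann--Roch), and this is indeed the inequality the paper establishes. But your argument for $(\star)$ has a genuine gap: applying Proposition \ref{condition2} to $|K-D|$ together with $d\geq g$ only yields $(\star)$ when $g\leq (3s+7)/2$, as you acknowledge, and your plan for the complementary high-genus regime is not a proof. It rests on an unproven hope that non-simplicity of the base-point-free part of $|K-D|$ contradicts simplicity of $|D|$ (nothing in Theorem \ref{nonsimple} gives this), and --- more seriously --- it says nothing at all about the case where $|K-D|$ is itself simple, which is not excluded. (A minor additional slip: in the sub-case $D\sim K$ you assert $\delta(K)=2k+2$, but $\delta(K)=\delta(K-0)=\delta(0)=0$; that case is vacuous since $K$ is never very special.)

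The missing idea is how the hypotheses ``simple, base point free, $d\geq g$'' are actually exploited. The paper's proof uses $d\geq g$ to conclude that $2D$ is non-special, so $\dim|2D|=2d-g$ exactly, and uses simplicity plus base-point-freeness to invoke the uniform position lemma (in the form of \cite[Ex.\ B.6, Chap.\ 3]{ACGH}), giving $\dim|2D|\geq 3\dim|D|-1$. Comparing the two and substituting $\dim|D|=\frac{1}{2}(d-\delta(D))+k+1$ yields $d\geq 6k-3\delta(D)+2g+4$, which is precisely your $(\star)$, with no case distinction and no restriction on $g$. Without an input of this kind --- some lower bound on $\dim|2D|$ or $\dim|D+D'|$ coming from simplicity --- the purely numerical manipulations of $\delta$, $\beta$ and the index cannot close the gap.
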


\begin{proof}
Set $r=\dim |D|$, we have $r=\frac{1}{2}(d-\delta (D))+k+1$ i.e.
$$d=2r+\delta(D)-2k-2.$$
Since $d\geq g$, $2D$ is non-special and therefore
$\dim |2D|=2d-g=4r+2\delta(D)-4k-4-g$. 
Since $|D|$ is simple and base point free, by \cite[Ex. B.6,
Chap. 3]{ACGH} (a consequence of the uniform position lemma) (note
that there is a misprint in the exercise, the correct formula should
be $r(\cal{D}+{\cal E})\geq$ $
r({\cal D})+2r({\cal E})-r({\cal E}-{\cal D})-1$),
we get $\dim |2D|\geq 3r-1$. Therefore 
$$4r+2\delta(D)-4k-4-g\geq 3r-1$$
$$r=\frac{1}{2}(d-\delta (D))+k+1\geq 4k-2\delta(D)+g+3$$
$$d-\delta(D)+2k+2\geq 8k-4\delta(D)+2g+6$$
$$d\geq 6k-3\delta(D)+2g+4.$$
Hence $\deg (K-D)=2g-2-d\leq 3\delta(D)-6k-6$.
By Lemma \ref{residuel}, we obtain finally
$$\dim |K-D|=\frac{1}{2}(\deg (K-D)-\delta (D))+k+1\leq \delta(D)-2k-2.$$
\end{proof}

\begin{prop}
\label{s=2}
Let $X$ be a real curve such that $s=2$. If $X$ has a very special
linear system then $X$ is hyperelliptic and the $g_2^1$ is very
special i.e. $\delta (g_2^1)=2$.
\end{prop}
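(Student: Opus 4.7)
The plan is to split on the dimension of the very special linear system $|D|$. Since $D$ is effective and $\delta(D) \leq d$, the defining formula forces $\dim |D| \geq 1$, and Theorem \ref{dim2} excludes $\dim |D| = 2$. If $\dim |D| = 1$, Proposition \ref{veryspecialpencil} immediately gives $d = s = 2$ with $D = P_1 + P_2$ lying in the two distinct real components, so $|D| = g_2^1$, $X$ is hyperelliptic, and $\delta(g_2^1) = 2$.

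Assume now $\dim |D| \geq 3$. Combining Clifford's inequality $\dim |D| \leq d/2$ with the defining identity $\dim |D| = \frac{1}{2}(d - \delta(D)) + \ind(D) + 1$ gives $\delta(D) \geq 2 \ind(D) + 2$, and since $\delta(D) \leq s = 2$ one obtains $\delta(D) = 2$, $\ind(D) = 0$, and $\dim |D| = d/2$. Thus $|D|$ achieves equality in Clifford's inequality on $X_{\CC}$.

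I then invoke the classical complex equality case of Clifford's theorem: if $0 < d < 2g - 2$ and $\dim |D| = d/2$, then $X_{\CC}$ is hyperelliptic and $|D|$ is a positive multiple of the $g_2^1$. The boundary cases are ruled out: $d \geq 6$ because $\dim |D| \geq 3$, and $d = 2g - 2$ would force $D \sim K$ by Riemann--Roch, contradicting $\delta(D) = 2$ via $\delta(K) = \delta(K - K) = \delta(0) = 0$ (here I use the identity $\delta(D) = \delta(K - D)$ recalled just before Lemma \ref{residuel}). Uniqueness of the $g_2^1$ on a complex hyperelliptic curve of genus $g \geq 4$ forces Galois-invariance, so $X$ itself is hyperelliptic.

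Writing $|D| = r \cdot g_2^1$ with $r = d/2 \geq 3$, the degree of $D$ restricted to a real component $C$ equals $r \cdot \deg(g_2^1|_C)$, which is odd iff both factors are; hence $\delta(D) = \delta(g_2^1)$ when $r$ is odd and $0$ otherwise, so $\delta(D) = 2$ forces $\delta(g_2^1) = 2$. The main obstacle is the equality-in-Clifford step together with ruling out the canonical case, which is cleanly handled by the identity $\delta(K) = 0$ already implicit in the paper's preliminaries.
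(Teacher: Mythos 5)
Your proof is correct and rests on the same key step as the paper's: the hypothesis $\delta(D)\leq s=2$ forces equality in Clifford's inequality (with $\ind(D)=0$), and the classification of the equality case, after discarding $D=0$ and $D=K$ via $\delta(K)=\delta(0)=0$, yields hyperellipticity. The only real divergence is at the end, where you derive $\delta(g_2^1)=2$ directly from the parity of the restriction of $r\cdot g_2^1$ to the two components instead of citing \cite[Prop. 2.10]{Mo1}; that self-contained argument is valid, and your separate treatment of the case $\dim|D|=1$ via Proposition \ref{veryspecialpencil} is harmless though not needed.
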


\begin{proof}
Assume $D$ is a very special divisor of degree $d$ and index $k$. Then
$\dim |D|=r=\frac{1}{2}(d-\delta (D))+k+1\geq \frac{1}{2}d+k$ since
$\delta (D)\leq 2$. By Clifford Theorem we get $k=0$. Since the null
divisor and $K$ are not very special and since we have an equality in
Clifford Theorem, it follows that $X$ is hyperelliptic. By
\cite[Prop. 2.10]{Mo1}, we get $\delta (g_2^1)=2$.
\end{proof}

\begin{prop}
\label{s=3}
 Let $X$ be a real curve such that $s=3$. If $X$ has a very special
linear system then $X_{\CC}$ is a trigonal curve and there exists a
real $g_3^1$ very
special i.e. $\delta (g_3^1)=3$.
\end{prop}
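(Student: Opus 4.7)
The plan is to show that $X$ carries a very special pencil; by Proposition \ref{veryspecialpencil} any such pencil is a base-point-free $g_s^1=g_3^1$ with $\delta=3$, and trigonality of $X_\CC$ then follows because if $X_\CC$ were hyperelliptic the unique (and hence real) $g_2^1$ would force every $g_3^1$ to equal $|g_2^1+P|$ for some real point $P$, a base point of the pencil, contradicting base-point-freeness. Let $D$ be any very special divisor on $X$; replacing $D$ by $K-D$ via Lemma \ref{residuel} if needed, we may assume $\deg D\leq g-1$. If $\dim|D|=1$, Proposition \ref{veryspecialpencil} already supplies the required pencil, so assume $|D|$ is not a pencil.

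I would then apply Proposition \ref{construction} to build a very special divisor $D'$ with $\ind(D')=0$, $\delta(D')=s=3$, and $r_1:=\dim|D'|\geq 3$ (the last coming from Proposition \ref{condition2}). The index, $\delta$ and dimension constraints force $\deg D'=2r_1+1$, and combining this with $\deg D'\leq\deg D\leq g-1$ gives the crucial bound $g\geq 2r_1+2$. Passing to the base-point-free part $D_1=D'-F$ via Lemma \ref{basepointfreepart}, $D_1$ is still very special with $\dim|D_1|=r_1$ and $d_1:=\deg D_1\leq 2r_1+1$.

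Two sub-cases now arise. If $|D_1|$ is non-simple, Theorem \ref{nonsimple}, applied to $D_1$ (whose base-point-free part is $|D_1|$ itself, with $\dim|D_1|=r_1\geq 3>1$), directly produces a very special pencil. If $|D_1|$ is simple, then $\varphi_{|D_1|}\colon X\to X'\subset\PP_\RR^{r_1}$ is birational onto a non-degenerate irreducible curve of degree $d_1\leq 2r_1+1$ with normalisation $X$, so Castelnuovo's genus bound yields
$$g\leq \pi(d_1,r_1)\leq \pi(2r_1+1,r_1).$$
A direct computation gives $\pi(7,3)=6$ and $\pi(2r_1+1,r_1)=r_1+3$ for $r_1\geq 4$; in every case $\pi(2r_1+1,r_1)<2r_1+2\leq g$, contradicting the Castelnuovo bound. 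Hence $|D_1|$ must be non-simple and a very special pencil exists. The delicate step is the Castelnuovo comparison at $r_1=3$, where $g\geq 8$ just barely exceeds $\pi(7,3)=6$; this is where the preliminary reduction $\deg D\leq g-1$ is essential.
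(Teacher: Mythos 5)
Your proof is correct, but it follows a genuinely different route from the paper's. The paper works with $|D|$ itself: it first deduces $\ind(D)=0$ and $\delta(D)=3$ directly from Clifford's inequality together with Proposition \ref{s=2}, concludes via Lemma \ref{basepointfreepart} that every very special system is primitive, and then splits into three cases --- simple with $d\leq g-1$ (killed by Castelnuovo in the form $r\leq\frac{1}{3}(d+1)$ for simple special systems, plus Theorem \ref{dim2}), simple with $d\geq g$ (where Proposition \ref{criteresimple}, the uniform-position estimate, shows $|K-D|$ is the desired pencil), and non-simple (Theorem \ref{nonsimple}). You instead normalize once to $d\leq g-1$ by residuation and then manufacture the auxiliary system $D'$ of Propositions \ref{construction} and \ref{condition2}, whose degree $2r_1+1\leq g-1$ lets you rule out the simple case by the Castelnuovo genus bound $g\leq\pi(d_1,r_1)$; this bypasses Proposition \ref{criteresimple} entirely, at the cost of losing the structural by-products ($\delta(D)=3$, primitivity of every very special system with $s=3$) that the paper extracts and reuses later. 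Incidentally, in your simple case the paper's form of Castelnuovo ($d_1\geq 3r_1-1$ for a simple special $g^{r_1}_{d_1}$) contradicts $d_1\leq 2r_1+1$ as soon as $r_1\geq 3$, so the detour through $g\geq 2r_1+2$ and the genus function $\pi$ is not really needed. Your trigonality argument (a base-point-free real $g_3^1$ cannot exist on a hyperelliptic curve) also differs from the paper's appeal to \cite[Prop.\ 3.10]{Mo1}; it is valid, but you should record why $g\geq 4$ here (if $g\leq 3$ then $s=3$ makes $X$ an $M$- or $(M-1)$-curve, excluded by Theorem \ref{Huisman}), since on a genus-$2$ curve a $g_3^1$ need not be of the form $|g_2^1+P|$.
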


\begin{proof}
Assume $D$ is a very special divisor of degree $d$ and index $k$. Then
$\dim |D|=r=\frac{1}{2}(d-\delta (D))+k+1\geq
\frac{1}{2}d+k-\frac{1}{2}$.
By Clifford Theorem we get $k=0$ i.e. $$r=\frac{1}{2}(d-\delta
(D))+1.$$
If $\delta(D)\leq 2$ we get a contradiction since in this case $X$
would be hyperelliptic (equality in Clifford inequality) and then $s=2$
(Proposition \ref{s=2}).
Therefore $$\delta (D)=3,$$ $d$ is odd and $$r=\frac{1}{2}(d-1).$$
By Lemma \ref{basepointfreepart}, since the index of any special
divisor is null and since the $\delta$ of any very special divisor is
equal to $3$, we can conclude that any very special linear system is
base point free i.e. any very special linear system is primitive.

Assume $D$ is simple and $d\leq g-1$. Castelnuovo's bound gives
$r\leq \frac{1}{3}(d+1)$ \cite{Beau} i.e.
$$\frac{1}{2}(d-1)\leq \frac{1}{3}(d+1)$$
$$d\leq 5.$$
Then $r\leq 2$ and it is impossible by Theorem \ref{dim2}.

Assume $D$ is simple and $d\geq g$. By Proposition \ref{criteresimple}
we get $\dim |K-D|\leq 1$ and it follows that $|K-D|$ is a very
special pencil i.e. a $g_3^1$ with $\delta (g_3^1)=3$.

Assume $D$ is non simple. If $|D|$ is a pencil there is nothing to
do. If $\dim |D|>1$, the existence of a very special $g_3^1$ is given
by Theorem \ref{nonsimple}.

If $X_{\CC}$ is not trigonal then $X_{\CC}$ must be hyperelliptic
since the complex gonality is less than the real gonality and since
$X$ has a special divisor. Since the $g_2^1$ of an hyperelliptic curve
is unique, $X$ must be hyperelliptic. By \cite[Prop. 3.10]{Mo1},
$s=2$, contradiction.
\end{proof}

\begin{lem}
\label{lempours=4}
Let $X$ be a real curve such that $s=4$. If $D$ is a very special
divisor on $X$ then $\ind (D)=0$ and one of the following statements holds:
\begin{description}
\item[({\cal{i}})] $\delta(D)=3$ and $|D|$ is primitive.
\item[({\cal{ii}})] $\delta(D)=4$ and the base part of $|D|$ is empty
  or a real point.
\end{description}
\end{lem}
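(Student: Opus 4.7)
The plan is to combine Clifford's inequality with the structural results of Section~2. The dimension formula $\dim|D|=\frac{1}{2}(d-\delta(D))+k+1$ together with $\dim|D|\leq d/2$ yields $k+1\leq\delta(D)/2$; since $\delta(D)\leq s=4$, the only admissible pairs are $(\delta(D),\ind(D))\in\{(2,0),(3,0),(4,0),(4,1)\}$, with Clifford equality holding in the first and last.

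First I would dispose of the pencil case: if $\dim|D|=1$, Proposition~\ref{veryspecialpencil} gives $D=P_1+\cdots+P_s$ with $\delta(D)=s=4$, $\ind(D)=0$, and $|D|$ primitive, which falls under conclusion~(ii) with empty base part. By Theorem~\ref{dim2} we may henceforth assume $\dim|D|\geq 3$.

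Next I would rule out the two Clifford-equality cases $(\delta(D),k)=(2,0)$ and $(4,1)$. In both, $\dim|D|=d/2\geq 3$, so $D\neq 0$; since Lemma~\ref{residuel} yields $\delta(K)=\delta(0)=0$, we cannot have $D=K$ either. Clifford equality therefore forces $X$ to be hyperelliptic with $|D|=r\,g_2^1$ for $r=\dim|D|\geq 3$. Then Theorem~\ref{nonsimple}(v) requires that the quotient curve $X'=\PP^1_{\RR}$ satisfy $s'=s/2=2$, contradicting $s'=1$. Hence $(\delta(D),\ind(D))\in\{(3,0),(4,0)\}$, establishing $\ind(D)=0$.

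Finally, I would analyze the base divisor $F$ of $|D|$ using Proposition~\ref{basepointfreepart}. The divisor $D-F$ is very special with $\ind(D-F)\geq 0$; but if $\ind(D-F)\geq 1$, the previous paragraph applied to $D-F$ forces $(\delta(D-F),\ind(D-F))=(4,1)$, already excluded. Hence $\ind(D-F)=\ind(D)=0$, and Proposition~\ref{basepointfreepart} yields $F=P_1+\cdots+P_f$ with the $P_i$ in distinct connected components among the $\delta(D)$ ``odd'' ones, so that $\delta(D-F)=\delta(D)-f$. Since $\delta(D-F)\in\{3,4\}$ (values $\leq 2$ are ruled out as above, and values $\leq 1$ violate $\delta(D-F)\geq 2$), we conclude: for $\delta(D)=3$ we must have $f=0$, so $|D|$ is base-point free; applying the same analysis to $K-D$ (which by Lemma~\ref{residuel} is very special with $\delta=3$, $\ind=0$) shows $|K-D|$ is also base-point free, giving primitivity and conclusion~(i). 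For $\delta(D)=4$ we obtain $f\in\{0,1\}$, giving conclusion~(ii). The main obstacle is the Clifford-equality step: we must extract the hyperelliptic $M$-cover structure, identify $\delta(K)=0$ cleanly via Lemma~\ref{residuel}, and derive a contradiction with $s=4$ through Theorem~\ref{nonsimple}(v).
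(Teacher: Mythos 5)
Your proof is correct and follows essentially the same route as the paper's: the dimension formula plus Clifford's inequality to pin down $(\delta(D),\ind(D))$, exclusion of the Clifford-equality (hyperelliptic) cases, and then Lemma \ref{basepointfreepart} to control the base divisor via the index/$\delta$ constraints. The only real difference is in the hyperelliptic exclusion, where the paper simply cites \cite[Prop. 3.10]{Mo1} (a hyperelliptic curve carrying a very special system has $s=2$), whereas you rederive the contradiction internally from $|D|=r\,g_2^1$ being non-simple together with Theorem \ref{nonsimple}; this is a legitimate, slightly more self-contained substitute for that one step.
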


\begin{proof}
Assume $D$ is a very special divisor of degree $d$ and index $k$. Then
$\dim |D|=r=\frac{1}{2}(d-\delta (D))+k+1\geq
\frac{1}{2}d+k-1$. We get $k=0$
since $X$ can not be hyperelliptic \cite[Prop. 3.10]{Mo1}
(if $k=1$ we have equality in Clifford inequality, if $k>1$ we
contradict Clifford inequality). 
Therefore $$r=\frac{1}{2}(d-\delta (D))+1.$$
We have $\delta (D)=3$ or $\delta (D)=4$ since $X$ is not hyperelliptic
(if $\delta (D)=2$ we have equality in Clifford inequality, if
$\delta(D)\leq 1$ we
contradict Clifford inequality). 

Assume $\delta(D)=3$. By Lemma \ref{basepointfreepart}, 
since the index of any very special
divisor is null and since the $\delta$ of any very special divisor is
equal to $3$ or $4$, we can conclude that $|D|$ is base point
free. Since $\delta(K-D)=3$, $|K-D|$ is also base point free
i.e. $|D|$ is primitive.

Assume $\delta(D)=4$. By Lemma \ref{basepointfreepart}, 
since the index of any very special
divisor is null and since the $\delta$ of any very special divisor is
equal to $3$ or $4$, we conclude that the base part of $|D|$ is empty
or a real point. 
\end{proof}

\begin{thm}
\label{s=4}
Let $X$ be a real curve such that $s=4$. If $X$ has a 
very special linear system then $X_{\CC}$ is a $4$-gonal curve and
there exists a very special $g_4^1$ i.e. $\delta(g_4^1)=4$.
Moreover, if $|D|$ is a very special linear system on $X$ then $|D|$ is
primitive, $\ind (D)=0$ and $\delta(D)=4$.
\end{thm}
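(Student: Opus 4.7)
The plan is to follow the strategy of the proof of Proposition \ref{s=3}, starting from the dichotomy provided by Lemma \ref{lempours=4}. Let $D$ be a very special divisor on $X$ of degree $d$. By Lemma \ref{lempours=4}, $\ind(D)=0$ and either $\delta(D)=3$ with $|D|$ primitive, or $\delta(D)=4$ with base divisor empty or reduced to a single real point. I first eliminate the case $\delta(D)=3$. If $D$ is non-simple, then Proposition \ref{veryspecialpencil} (for $\dim|D|=1$) or Theorem \ref{nonsimple} (for $\dim|D|>1$) forces $\delta(D)=s=4$, a contradiction. If $D$ is simple with $d\leq g-1$, the Castelnuovo bound $r\leq(d+1)/3$ of \cite{Beau} combined with $r=(d-1)/2$ yields $r\leq 2$, contradicting Theorem \ref{dim2} (the residual case $r=1$ being excluded because a simple pencil of degree $\geq 2$ would force $g=0$). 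If $D$ is simple with $d\geq g$, Proposition \ref{criteresimple} gives $\dim|K-D|\leq 1$; hence $|K-D|$ is a very special pencil, and Proposition \ref{veryspecialpencil} then forces $\delta(K-D)=s=4$, contradicting $\delta(K-D)=\delta(D)=3$. The symmetric degree range is covered by swapping $D$ with $K-D$ via Lemma \ref{residuel}.

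Thus $\delta(D)=4$. If the base divisor of $|D|$ were a single real point $P$, then $|D-P|$ would be very special (Lemma \ref{basepointfreepart}) with $\delta(D-P)=3$, which has just been ruled out. Hence $|D|$ is base point free; applying the same argument to $K-D$ (which is very special with $\delta(K-D)=4$ by Lemma \ref{residuel}) shows $|K-D|$ is base point free too, so $|D|$ is primitive.

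To produce a very special $g_4^1$, I would argue by cases. If $|D|$ is a pencil, Proposition \ref{veryspecialpencil} realizes it as the desired $g_4^1$. If $D$ is non-simple with $\dim|D|>1$, Theorem \ref{nonsimple}(vi) supplies a very special pencil which by Proposition \ref{veryspecialpencil} is a $g_4^1$ with $\delta=4$. If $D$ is simple with $\dim|D|\geq 3$, I split on $d$: for $d\geq g$, Proposition \ref{criteresimple} gives $\dim|K-D|\leq 2$, so $\dim|K-D|=1$ by Theorem \ref{dim2}, yielding a very special $g_4^1$; for $d\leq g-1$, Castelnuovo's bound forces $r=3$ and $d=8$, and the estimate $h^0(2D)\geq 3r=9$ from \cite[Ex. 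B.6]{ACGH} combined with Clifford's inequality gives $h^0(2D)=9$, so comparing with $h^0(\O_{\PP^3}(2))=10$ places $\varphi_{|D|}(X)\subset\PP^3$ on an irreducible quadric surface, after which Proposition \ref{quadrique} delivers the required $g_4^1$.

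Having constructed a real very special $g_4^1$, $X$ is at most $4$-gonal over $\RR$ and so is $X_\CC$. Ruling out lower gonality: if $X_\CC$ were hyperelliptic, its unique $g_2^1$ would descend to $X$, forcing $s\leq 2$ by \cite[Prop. 3.10]{Mo1}; if $X_\CC$ were trigonal with $g\geq 5$, the unique $g_3^1$ would descend, but then every $g_4^1$ on $X$ would have a base point (since $W_4^1=W_3^1+X$ on a trigonal curve), contradicting the primitivity of our very special $g_4^1$. For $g\leq 4$, the hypothesis $s=4$ makes $X$ an $M$- or $(M-1)$-curve, and Theorem \ref{Huisman} rules out any very special linear system. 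Hence $X_\CC$ is exactly $4$-gonal. The main obstacle is the simple case with $\dim|D|\geq 3$ and $d\leq g-1$, where the quadric-surface argument of Proposition \ref{quadrique} must be triggered through a delicate combination of Castelnuovo's bound, the ACGH dimension estimate for $|2D|$, and Clifford's inequality.
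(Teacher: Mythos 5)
Most of your argument tracks the paper's proof closely and is sound: the elimination of $\delta(D)=3$ (non-simple forces $\delta=s=4$ via Theorem \ref{nonsimple}; simple with $d\le g-1$ dies on Castelnuovo plus Theorem \ref{dim2}; simple with $d\ge g$ dies on Proposition \ref{criteresimple}), the promotion to primitivity, and the case analysis producing the very special $g_4^1$ all match the paper, the only cosmetic difference being that the paper handles the $\delta(D)=3$, $d\ge g$ range by passing to $K-D$ and reusing Castelnuovo rather than invoking Proposition \ref{criteresimple} twice. Your route to the quadric in the simple case with $r=3$, $d=8\le g-1$ --- forcing $2D$ special, hence $h^0(2D)=9<10=h^0(\mathcal{O}_{\PP^3}(2))$ by the corrected ACGH estimate together with Clifford --- is a legitimate and somewhat more self-contained alternative to the paper's appeal to Beauville's extremal-curve lemma and \cite[p. 118]{ACGH}; both arguments land in Proposition \ref{quadrique} and deliver the very special $g_4^1$.

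There is, however, a genuine gap in your exclusion of the trigonal case. You assert that on a trigonal curve every $g_4^1$ has a base point ($W_4^1=W_3^1+X$). By Castelnuovo--Severi this holds only for $g\ge 7$ (independence of the $g_3^1$ and a base-point-free $g_4^1$ forces $g\le(3-1)(4-1)=6$); for $g=5,6$ it is false, as the normalization of a curve of bidegree $(3,4)$ on $\PP^1\times\PP^1$ shows. For $g=6$ your conclusion survives anyway, since $s=4=g-2$ makes $X$ an $(M-3)$-curve and Proposition \ref{M-3} forbids any very special system; but for $g=5$ (so $s=4=g-1$) nothing in your argument rules out a trigonal curve carrying a base-point-free very special $g_4^1$. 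Closing this requires Theorem \ref{trigonal} --- whose proof for $g>4$ uses the Maroni invariant and the CKM classification to show every primitive very special system on a trigonal curve is a multiple of the $g_3^1$, hence has $s=3$ --- and this is precisely what the paper cites at this step. The remainder of your gonality discussion (descent of the hyperelliptic $g_2^1$, the range $g\le 4$ via Huisman) is fine.
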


\begin{proof}
Assume $D$ is a very special divisor of degree $d$. By Lemma
\ref{lempours=4} we know that $\ind (D)=0$ and that $\delta (D)\geq
3$. 

Suppose first that $\delta (D)=3$. We know that $|D|$ is
primitive (Lemma \ref{lempours=4}) and that $|D|$ and $|K-D|$ are
simple (Theorem \ref{nonsimple}). 
Changing $D$ by $K-D$ if necessary, we may assume that $d\leq g-1$.
We have $$\dim |D|=r=\frac{1}{2}(d-3)+1.$$
By Castelnuovo's bound $$r\leq \frac{1}{3}(d+1)$$ and we get $d\leq 5$
and thus $r\leq 2$. Since $|D|$ is simple, it follows from Theorem
\ref{dim2} that this case is not possible.

Suppose now that $\delta (D)=4$. If $|D|$ is not base point free
then the base divisor is a real point $P$ by Lemma \ref{lempours=4},
but then $D-P$ is a very special divisor (Lemma
\ref{basepointfreepart}) with $\delta(D-P)=3$, we have shown
previously that it is impossible. 
Thus $|D|$ and $|K-D|$ are base point free i.e they are
primitive. 

We have $$\dim |D|=r=\frac{1}{2}(d-4)+1=\frac{1}{2}d-1.$$

Assume $|D|$ is simple and $d\leq g-1$. 
By Castelnuovo's bound $$r\leq \frac{1}{3}(d+1)$$ and we get $d\leq 8$
and $r\leq 3$. By Theorem \ref{dim2} and since $|D|$ is simple we
get $r=3$ and $d=8$. By \cite[Lem. 5.1, Rem. 5.2]{Beau}, $X$ is an
extremal curve in the sense of Castelnuovo i.e. $|D|$ is very ample
and $X\simeq \varphi_{|D|}(X)\subset \PP^3$ is a space curve of
maximal genus. By \cite[p. 118]{ACGH}, $\varphi_{|D|}(X)$ lie on a
unique quadric surface $Q$. By Proposition \ref{quadrique}, $X$
has a very special $g_4^1$.

Assume $|D|$ is simple and $d\geq g$. By Proposition
\ref{criteresimple}, Lemma \ref{residuel} and Theorem \ref{dim2},
$|K-D|$ is a very special $g_4^1$.

Assume $|D|$ is non simple. If $\dim |D|=1$ there is nothing to do. If
$\dim |D|\geq 2$ the existence of a very special $g_4^1$ follows from
Theorem \ref{nonsimple}. 

If the gonality of $X_{\CC}$ is $\leq 3$ then we contradict
\cite[Prop. 3.10]{Mo1} and Theorem \ref{trigonal}.
\end{proof}

\section{Real trigonal curves}

We study the existence of very special linear series on non
hyperelliptic curves with a complex $g_3^1$.

\begin{thm}
\label{trigonal} 
Let $X$ be a real curve such that $X_{\CC}$ is trigonal. Any very
special linear system on $X$ is a very special $g_3^1$ or the residual
of a very special $g_3^1$. In this situation, $s=3$ and $a(X)=0$.
\end{thm}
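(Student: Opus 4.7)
The plan is to classify very special linear systems on the trigonal real curve $X$ in two stages: first exclude all $s\neq 3$, then treat $s=3$ directly.

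Since $X_{\CC}$ trigonal forces $X$ to be non-hyperelliptic, Proposition \ref{s=2} gives $s\neq 2$, and Theorem \ref{s=4} rules out $s=4$ (which would force $X_{\CC}$ to be $4$-gonal). Hence $s=3$ or $s\geq 5$.

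For $s=3$, Proposition \ref{s=3} provides a very special $g_3^1$, and to classify an arbitrary very special $|D|$ I may assume $d\leq g-1$ (Lemma \ref{residuel}) and that $|D|$ is base point free (Lemma \ref{basepointfreepart}). The argument of Proposition \ref{s=3} then yields $\delta(D)=3$, $\ind(D)=0$, $d$ odd, and $\dim|D|=(d-1)/2$. I split into three subcases. If $|D|$ is simple with $d\leq g-1$, Castelnuovo's bound $\dim|D|\leq (d+1)/3$ combined with $\dim|D|=(d-1)/2$ forces $d\leq 5$, so $\dim|D|\leq 2$; Theorem \ref{dim2} excludes $\dim|D|=2$, leaving $\dim|D|=1$, whence $|D|=g_3^1$ by Proposition \ref{veryspecialpencil}. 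If $|D|$ is simple with $d\geq g$, Proposition \ref{criteresimple} gives $\dim|K-D|\leq 1$, identifying $|K-D|$ with the very special $g_3^1$, so $|D|=|K-g_3^1|$. If $|D|$ is non-simple with $\dim|D|\geq 2$, Theorem \ref{nonsimple} presents $D$ as the pullback under a degree-$2$ cover, forcing $d$ to be even; but $d=2\dim|D|+1$ is odd, a contradiction. Thus $\dim|D|=1$ in the non-simple case, and again $|D|=g_3^1$.

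The main obstacle is excluding $s\geq 5$. Here $g\geq 5$, so $X_{\CC}$ has a unique $g_3^1$ descending to a real pencil $|E|$ with $\delta(E)\in\{1,3\}$; the value $\delta(E)=3$ forces $s=3$ via Proposition \ref{veryspecialpencil}, so $\delta(E)=1$. If the very special $|D|$ is itself a pencil, then $\deg D=s\geq 5$ by Proposition \ref{veryspecialpencil}, and I would combine the Castelnuovo-Severi bound $g\leq 2(s-1)$ (for $\gcd(3,s)=1$, applied to the two pencils $|E|$ and $|D|$), the eliminations of $s\in\{g-4,g-2,g,g+1\}$ by Propositions \ref{M-2}, \ref{M-3} and Huisman's theorem, and an analysis in the spirit of Proposition \ref{quadrique} of the bi-degree $(3,s)$ image of $X$ in $\PP^1\times\PP^1$, where the topological incompatibility of $\delta(E)=1$ with $\delta(D)=s\geq 5$ yields a contradiction in each remaining $(s,g)$ pair. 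For the non-pencil case, Theorem \ref{dim2} forces $\dim|D|\geq 3$, and Propositions \ref{dim3} and \ref{quadrique} applied to $|D|$ or to $|K-D|$ extract a very special pencil of degree $s$ inside the decomposition, reducing again to the pencil case. Finally, $a(X)=0$ follows since the very special $g_3^1$ then present yields a separating morphism $X\to\PP^1$.
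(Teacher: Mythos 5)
Your treatment of the case $s=3$ is essentially sound: it reproduces the argument of Proposition \ref{s=3} (index $0$, $\delta(D)=3$, base point freeness, then the trichotomy simple with $d\le g-1$, simple with $d\ge g$, non-simple) and correctly upgrades it to a classification of every very special system as $g_3^1$ or $K-g_3^1$. Two minor points there: a pencil of degree $\ge 2$ is never simple, so your subcase ``simple with $\dim|D|=1$'' is vacuous rather than a route to $g_3^1$; and in the non-simple case the contradiction can be read off just as well from $s$ even in Theorem \ref{nonsimple}.

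The genuine gap is the exclusion of $s\ge 5$. What you offer there is a strategy, not a proof: you say you ``would combine'' Castelnuovo--Severi, the eliminations of certain values of $s$ near $g$, and ``an analysis in the spirit of Proposition \ref{quadrique}'', claiming a contradiction ``in each remaining $(s,g)$ pair'' without exhibiting one, and the remaining range (e.g.\ $s=g-1$, $s=g-3$, and $s\le g-5$ with $g\le 2s-2$) is not empty. Worse, for a very special $|D|$ that is not a pencil you invoke Propositions \ref{dim3} and \ref{quadrique}, which apply only when $\dim|D|=3$ (and, for \ref{quadrique}, only when $|D|$ is simple and its image lies on a quadric); you give no mechanism for ``extracting a very special pencil'' from a very special system of higher dimension. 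The paper bypasses all of this with one structural input you never use: for $g>4$ the trigonal curve has a unique, necessarily real, $g_3^1$, its Maroni invariant satisfies $\frac{g-4}{3}\le m\le\frac{g-2}{3}$, and by \cite[Example 1.2.7]{CKM} every primitive special linear system is a multiple $r\cdot g_3^1$. Substituting $d=3r$ and $\delta(r\cdot g_3^1)$ (determined by the parity of $r$ and by $\delta(g_3^1)\in\{1,3\}$) into $r=\frac{1}{2}(d-\delta(D))+k+1$ kills every case except $\delta(g_3^1)=3$, $r=1$, $k=0$, which forces $s=3$ via Proposition \ref{veryspecialpencil}; the reduction from arbitrary to primitive very special systems is then handled by Lemma \ref{basepointfreepart} and Corollary \ref{primitivehull}. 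Without this classification of primitive systems on a trigonal curve, or a fully worked-out substitute for $s\ge 5$, your argument does not establish the theorem.
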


\begin{proof}
Remark that since $X$ is not hyperelliptic then $s\geq 3$ (Proposition
\ref{s=2}). 

Assume $g\leq 4$. By Theorem \ref{Huisman}, Propositions \ref{M-2} 
and  \ref{s=3}, if $X$ has a very
special system $|D|$ then $g=4$, $s=3$ and $|D|=g_3^1$ or
$|D|=K-g_3^1$. 

Assume $g>4$. Then $X_{\CC}$ has a unique $g_3^1$ and this $g_3^1$
must be real. 
Suppose $D$ is very special of degree $d\leq g-1$, index $k$ and
suppose moreover $|D|$ is primitive. Set $r=\dim |D|$.
Using the fact that the Maroni's
invariant $m$ of $X$ (it is the first scrollar invariant of the
$g_3^1$) is well understood, we have $\frac{g-4}{3}\leq m\leq
\frac{g-2}{3}$, it is proved in \cite[Example 1.2.7]{CKM} that 
$|D|=r.g_3^1$. Since $\delta (g_3^1)=1$ or $3$, we consider these two
cases separatly.\\
$\bullet$ $\delta(g_3^1)=1$: If $r$ is odd then we get $$r=
\frac{1}{2}(3r-1)+k+1$$ i.e. $r<0$, impossible. If $r$ is even 
then we get $$r=
\frac{1}{2}(3r)+k+1$$ i.e. $r<0$, impossible.\\
$\bullet$ $\delta(g_3^1)=3$: If $r$ is odd then we get $$r=
\frac{1}{2}(3r-3)+k+1$$ and therefore $r=1$ i.e. $|D|=g_3^1$. It is easy
to see that the case $r$ is even is not possible.\\

We have proved that any primitive very
special linear system on $X$ is the very special $g_3^1$ or its
residual. Since the index of any primitive very
special linear system on $X$ is null, it follows from Lemma
\ref{basepointfreepart} and Corollary \ref{primitivehull} 
that the index of any very
special linear system on $X$ is null. 
Suppose now $|D|$ is very special but not primitive, let $|D'|$ denote
the primitive hull of the base point free part of $|D|$. By Lemma
\ref{basepointfreepart} and Corollary \ref{primitivehull}, we must
have $\delta(D')<\delta (D)$, impossible since $\delta(D')=s=3$.
\end{proof}

\section{Four-gonal real curves}

In this section, we study the existence of very special linear series
on four-gonal real curves.
We suppose $X$ is a real curve such that $X_{\CC}$ is
$4$-gonal and such that there exist a base point free 
$g_4^1$ on $X$. We do not assume this
$g_4^1$ is unique. In summary, we assume that $X_{\CC}$ and $X$ are
both $4$-gonal.

In this section we will use several times the following Lemma due to
Eisenbud \cite[Lem. 1.8]{CM}. Note that this lemma concerns linear
systems over $\CC$ but the proof works also over $\RR$.

\begin{lem}
\label{Eisenbud}
Let $g_n^t$, $g_m^r$ $(t,r\geq 1)$ be real complete linear systems on a
real curve $X$. If $g_n^t+g_m^r$ has the minimum possible dimension
$t+r$ then there exists a real base point free pencil $g_e^1$ such
that $g_n^t=t.g_e^1$ and $g_m^r=r.g_e^1$.
\end{lem}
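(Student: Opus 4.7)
The plan is to prove the lower bound $\dim|D_1+D_2|\geq t+r$ first, and then show that equality forces the strong structural conclusion by an induction on $\min(t,r)$, with the base case handled by the base-point-free pencil trick.

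First I would set up the real multiplication map
$$\mu:H^0(\mathcal{O}(D_1))\otimes H^0(\mathcal{O}(D_2))\longrightarrow H^0(\mathcal{O}(D_1+D_2)).$$
Fix nonzero real sections $v_0\in H^0(D_1)$ and $w_0\in H^0(D_2)$ and extend them to real bases $\{v_0,\dots,v_t\}$ and $\{w_0,\dots,w_r\}$. The $t+r+1$ products $v_0w_0,v_0w_1,\dots,v_0w_r,v_1w_0,\dots,v_tw_0$ are linearly independent (check by evaluating at $t+r+1$ sufficiently general points of $X$), giving the lower bound. When equality $\dim|D_1+D_2|=t+r$ holds, these products span the image and $\ker\mu$ has dimension $tr$; the goal is to extract from these bilinear relations a common rational function.

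The key case is $t=1$, handled by the base-point-free pencil trick. If $|D_1|$ is a base-point-free pencil (after removing any base divisor, which one then adds back to the final $g_e^1$) then the trick gives the exact sequence
$$0\longrightarrow H^0(D_2-D_1)\longrightarrow H^0(D_1)\otimes H^0(D_2)\longrightarrow H^0(D_1+D_2).$$
The minimality hypothesis $\dim|D_1+D_2|=r+1$ forces $h^0(D_2-D_1)=r$. Iterating on $D_2-D_1, D_2-2D_1,\dots$ shows that $|D_2-iD_1|$ is nonempty of dimension $r-i$ for $0\leq i\leq r$, which in turn forces $|D_2|=r\cdot|D_1|$; thus $g_e^1=|D_1|$ works for both. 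For general $t,r\geq 2$, I would pass to a generic real hyperplane subsystem $|D_1'|\subset|D_1|$ of dimension $t-1$, verify that $\dim|D_1'+D_2|$ must still equal $(t-1)+r$ (a lower bound coming from Step 1 applied to $|D_1'|$ and $|D_2|$, an upper bound since it is contained in $|D_1+D_2|$ cut by the corresponding hyperplane), and apply the inductive hypothesis to produce a pencil $g_e^1$ with $|D_2|=r\cdot g_e^1$; symmetrically I handle $|D_1|$. Since all the sections and constructions are Galois-equivariant, the resulting rational function $f=v_1/v_0$ on $X$ is defined over $\RR$, hence so is $g_e^1$.

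The main obstacle is the inductive step: one must check that after cutting $|D_1|$ by a generic real hyperplane, the minimum-dimension hypothesis persists, and that the pencils $g_e^1$ arising from the two separate inductive applications (one to $(|D_1'|,|D_2|)$, one to $(|D_1|,|D_2'|)$) actually coincide. A secondary subtlety is controlling the base divisor of $|D_1|$ when $|D_1|$ is itself a pencil: one must show that $g_e^1$ can be chosen base-point-free, which corresponds precisely to the statement in the lemma.
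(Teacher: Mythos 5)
The paper itself gives no proof of this lemma: it is quoted from Coppens--Martens \cite[Lem.\ 1.8]{CM}, where it is due to Eisenbud, with only the remark that the argument carries over from $\CC$ to $\RR$. Measured against the standard argument, your strategy (lower bound $\dim|D_1+D_2|\geq t+r$ via the multiplication map, base case $t=1$ by the base point free pencil trick, induction on $t$) is the right one, but the inductive step as written fails. You reduce $|D_1|$ to a generic hyperplane subsystem $|D_1'|$ of dimension $t-1$; this is an \emph{incomplete} linear series (the complete series of the class of $D_1$ still has dimension $t$), so the lemma --- and hence your inductive hypothesis, which is stated for complete systems --- does not apply to the pair $(|D_1'|,|D_2|)$. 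Moreover the quantity you bound, $\dim|D_1'+D_2|$, is the dimension of a span of products inside $H^0(D_1+D_2)$, not of a complete linear system, so even the ``minimality persists'' step is not the statement the induction needs. The standard repair is to subtract a general real point instead: $|D_1-P|$ is complete of dimension $t-1$, and $\dim|(D_1-P)+D_2|=\dim|D_1+D_2-P|=t+r-1$ holds automatically for $P$ general. Induction then identifies $|D_2|$ with $r\,g_e^1$ (up to base divisor); since $g_e^1$ is thereby pinned down by $|D_2|$ alone, your worry about the two inductive branches producing different pencils disappears, and one recovers $|D_1|=t\,g_e^1$ by running the pencil trick against the now-known base point free $g_e^1$.

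Separately, the two ``subtleties'' you flag at the end are genuine and are left unresolved. Your base case only yields $h^0(D_2-iD_1)\geq r+1-i$, hence $D_2\sim rD_1+E$ with $E\geq 0$, $\dim|rD_1|=r$ and $|D_2|=r|D_1|+E$ with $E$ contained in the base divisor of $|D_2|$; nothing in the argument forces $E=0$, nor the base point freeness of the pencil $|D_1|$ itself. These are not cosmetic: on a hyperelliptic curve the pair $g_3^1=|g_2^1+P|$ (base point $P$) and $g_m^r=r\,g_2^1$ satisfies the dimension hypothesis while the literal conclusion fails. So either one assumes the input series base point free --- which is how the lemma is invoked throughout the paper, always after splitting off the base divisor --- or one states the conclusion up to base divisors; a complete proof must commit to one of these and justify it.
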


\begin{defn}
A linear system $|D|$ is called ``non-trivial'' if it is base point
free and if
$|D|$ and $|K-D|$ have both dimension $\geq 1$.
\end{defn}

\begin{rema} A base point free very special linear system is always
  non-trivial.
\end{rema}

\begin{defn}
We say that a non-trivial linear system $|D|$ of degree $d$ and 
dimension $r$ is\\
$\bullet$ of type 1 (for the $g_4^1$) if it is composed of the $g_4^1$
i.e. $|D|=r.g_4^1$.\\
$\bullet$ of type 2 (for the $g_4^1$) 
if the residual of $|D|$ is composed of the
$g_4^1$ i.e. $|K-D|=r'.g_4^1+F$ with $F$ the base divisor of $|K-D|$
and $r'=\dim |K-D|=g-d+r-1$.
\end{defn}

\begin{prop} (Very special linear systems of type 1)
\label{type1}\\ 
Let $|D|$ be a base point free very special linear system of type 1
for the $g_4^1$. Then $|D|=g_4^1$ and thus the $g_4^1$ must be very special.
In this situation, $s=4$ and $a(X)=0$.
\end{prop}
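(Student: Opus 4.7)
The plan is to show $r:=\dim|D|$ must equal $1$, so that $|D|=g_4^1$. Since $|D|$ is of type~$1$ we have $|D|=r\cdot g_4^1$, hence $\deg D=4r$, and the hypothesis that $|D|$ is very special,
$$\dim|D|>\frac{1}{2}(\deg D-\delta(D)),$$
becomes $r>\frac{1}{2}(4r-\delta(D))$, i.e.\ $\delta(D)>2r$.

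Next I would control $\delta(D)$ by a parity argument. Because $D=r\cdot g_4^1$, the degree of $D$ on each real component of $X$ is $r$ times the degree of $g_4^1$ on that component. If $r$ is even, every real component carries an even degree, so $\delta(D)=0$, contradicting $\delta(D)>2r\geq 2$. If $r$ is odd, $\delta(D)=\delta(g_4^1)$; and since the non-real points of any divisor appear in conjugate pairs and contribute even total degree, $\delta$ of a divisor has the same parity as its degree, so $\delta(g_4^1)\in\{0,2,4\}$ and in particular $\delta(g_4^1)\leq 4$. Combined with $\delta(g_4^1)>2r$ this forces $r=1$.

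Thus $|D|=g_4^1$ with $\delta(g_4^1)=4$, so $g_4^1$ is a very special pencil. Proposition~\ref{veryspecialpencil} applied to $g_4^1$ yields $d=\delta(g_4^1)=s$, hence $s=4$ and a general divisor in $|g_4^1|$ is the sum of four real points, one in each real component. The induced morphism $\varphi_{|g_4^1|}:X\to\PP_{\RR}^1$ then has degree~$4$ and restricts to a degree~$4$ map $X(\RR)\to\PP^1(\RR)$ (degree~$1$ on each real component), so it is separating; by the remark following Proposition~\ref{M-2} this gives $a(X)=0$. The only mildly delicate step is the parity observation that $\delta(g_4^1)$ is even and therefore at most~$4$; everything else reduces to Proposition~\ref{veryspecialpencil} and the definition of a very special divisor.
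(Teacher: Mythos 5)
Your proof is correct and follows essentially the same route as the paper's: write $|D|=r\cdot g_4^1$, split on the parity of $r$, and use that $\delta(g_4^1)\in\{0,2,4\}$ to force $r=1$ and $\delta(g_4^1)=4$, then invoke Proposition~\ref{veryspecialpencil} for $s=4$ and the separating morphism for $a(X)=0$. The only cosmetic difference is that your single inequality $\delta(D)>2r$ disposes of the case $\delta(g_4^1)=2$ immediately, where the paper argues that case separately via equality in Clifford's inequality and the $4$-gonality of $X_{\CC}$.
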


\begin{proof} 
Let $d$ be the degree of $D$ and let $k$ be the index of $D$.
We have $$\dim |D|=r=\frac{1}{2}(d-\delta (D))+k+1.$$
Since $|D|$ is of type 1, we also have $|D|=r.g_4^1$.

If $r$ is even, then $\delta(D)=0$, impossible since it will
contradict Clifford Theorem.

Assume $r$ is odd, we have $d=4r$ and $\delta(D)=\delta(g_4^1)=0$ or
$2$ or $4$.\\
$\bullet$ If $\delta(g_4^1)=0$ then we contradict Clifford Theorem.\\
$\bullet$ If $\delta(g_4^1)=2$ then $r=\frac{1}{2}(4r-2)+k+1$. It
follows that $k=0$ and we have an equality in Clifford inequality,
impossible since $X_{\CC}$ is $4$-gonal.\\
$\bullet$ If $\delta(g_4^1)=4$ then $s=4$ and the $g_4^1$ is very
special. We have $r=\frac{1}{2}(4r-4)+k+1$ i.e. $r=1-k$ and the proof
is done.
\end{proof}

\begin{prop} (Very special linear systems of type 2)
\label{type2}\\ 
Let $|D|$ be a base point free very special linear system of type 2
for the $g_4^1$. Then $|D|=|K-g_4^1|$ and the $g_4^1$ must be very special.
In this situation $s=4$ and $a(X)=0$.
\end{prop}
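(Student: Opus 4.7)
The plan is to reduce Proposition \ref{type2} to Proposition \ref{type1} by passing to the residual linear system. Suppose $|D|$ is base point free, very special, and of type 2 for the $g_4^1$, so by definition $|K-D|=r'.g_4^1+F$ with $F$ the base divisor of $|K-D|$ and $r'=\dim |K-D|=g-d+r-1$.

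First, by Lemma \ref{residuel}, the residual system $|K-D|$ is itself very special with the same index as $|D|$. Applying Lemma \ref{basepointfreepart} to $|K-D|$, its base point free part $r'.g_4^1$ is very special as well, with index at least $\ind(K-D)=\ind(D)$. Moreover $r'.g_4^1$ is non-trivial: it is base point free by construction, $r'\geq 1$ because $|D|$ is non-trivial (by the remark following the definition of non-trivial), and the residual of $r'.g_4^1$, being linearly equivalent to $D+F$, has dimension at least $\dim|D|\geq 1$. Thus $r'.g_4^1$ is a base point free very special linear system of type 1 for the $g_4^1$.

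By Proposition \ref{type1} applied to $r'.g_4^1$, we conclude that $r'=1$, that the $g_4^1$ is very special, and that $s=4$ and $a(X)=0$. But then $|K-D|$ itself is a pencil, and since it is very special, it is in fact a very special pencil. By Proposition \ref{veryspecialpencil}, every very special pencil is primitive, hence in particular base point free, which forces $F=0$. Therefore $|K-D|=g_4^1$, i.e.\ $|D|=|K-g_4^1|$, as required.

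The main point --- more of a key observation than a true obstacle --- is recognizing that the type 2 condition on $|D|$ translates, via Lemmas \ref{residuel} and \ref{basepointfreepart}, into the type 1 condition on the base point free part of $|K-D|$; once this is in place, Proposition \ref{type1} pins down $r'$ and the geometry of $X$, and the vanishing of $F$ is forced by the primitivity of very special pencils from Proposition \ref{veryspecialpencil}.
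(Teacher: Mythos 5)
Your proposal is correct and follows essentially the same route as the paper: pass to the residual via Lemma \ref{residuel}, note that the moving part of $|K-D|$ is a base point free very special system of type 1, invoke Proposition \ref{type1} to get $r'=1$ and the very special $g_4^1$, and then kill the base divisor $F$ because a very special pencil is base point free. The only (immaterial) difference is that you justify $F=0$ by the primitivity statement in Proposition \ref{veryspecialpencil}, whereas the paper cites Theorem \ref{nonsimple}; your explicit verification that $r'.g_4^1$ is non-trivial is a welcome extra detail.
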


\begin{proof}
By Lemma \ref{residuel}, $K-D$ is very special. By Lemma
\ref{basepointfreepart}, the moving part of $|K-D|$ is very special of
type 1. From Proposition \ref{type1}, this moving part is the $g_4^1$
which is very special. Therefore $|K-D|$ is a very special pencil but
then it must be base point free by Theorem \ref{nonsimple}.
Thus $|K-D|=g_4^1$ and the proof is done.
\end{proof}

\begin{prop}
\label{-g41}
Let $|D|$ be a base point free very special linear system such that
$|D|$ is not a pencil (particularly, $|D|$ is not of type 1) and $|D|$ is not
of type 2. Then $$\dim |D-g_4^1|=\dim |D|-2$$ and $|D-g_4^1|$ is base
point free.
\end{prop}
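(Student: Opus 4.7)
The plan is to sandwich $j := \dim|D| - \dim|D - g_4^1|$ between $2$ and $2$ by applying Lemma \ref{Eisenbud} on both the $|D|$-side and the residual $|K-D|$-side, and then to establish base-point-freeness by another Eisenbud argument on $|D - P|$ for a putative base point $P$. Set $r = \dim|D|$ and $r' = \dim|K-D|$; since $|D|$ is not a pencil and $r \neq 2$ by Theorem \ref{dim2}, we have $r \geq 3$.

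For the lower bound $j \geq 2$, subadditivity of dimension applied to $|D| = g_4^1 + |D - g_4^1|$ gives $r \geq 1 + (r - j)$, hence $j \geq 1$. If $j = 1$, Lemma \ref{Eisenbud} would force $g_4^1$ and $|D - g_4^1|$ to be composed of a common base-point-free pencil $g_e^1$; since $X_\CC$ is $4$-gonal (so not hyperelliptic), the only possibility is $g_e^1 = g_4^1$, giving $|D| = r \cdot g_4^1$, contradicting the hypothesis that $|D|$ is not of type $1$.

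For the upper bound $j \leq 2$, a Riemann-Roch computation on $D - g_4^1$ gives
$$\dim|K - D + g_4^1| = r' + 4 - j,$$
and subadditivity of the residual sum yields $r' + 4 - j \geq r' + 1$, so $j \leq 3$. Equality $j = 3$ would force, by Lemma \ref{Eisenbud} applied to $g_4^1$ and the moving part of $|K - D|$, the moving part of $|K - D|$ to equal $r' \cdot g_4^1$, making $|D|$ of type $2$, contradiction. Combined with the lower bound, $j = 2$.

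For base-point-freeness of $|D - g_4^1|$, suppose for contradiction that $P$ is a real base point (the non-real case being analogous). Then $\dim|D - P - g_4^1| = r - 2 = \dim|D - P| - 1$, and the subadditivity applied to $|D - P| = g_4^1 + |D - P - g_4^1|$ attains equality, so by Lemma \ref{Eisenbud}, $|D - P|$ is composed of $g_4^1$, yielding $D \sim (r-1) E + P$ for some $E \in g_4^1$. The main obstacle is to convert this into a contradiction with the non-type-$2$ hypothesis: Riemann-Roch shows $\dim|K - (r-1) g_4^1| = r'$, so $P$ lies in the base locus of $|K - (r-1) g_4^1|$, and a careful comparison of the base divisors of $|K - (r-1) g_4^1|$ and $|K - D|$ should then force the moving part of $|K - D|$ to be $r' \cdot g_4^1$, i.e., $|D|$ to be of type $2$, contradiction. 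This last structural step is the most delicate part.
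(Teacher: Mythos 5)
Your proof that $\dim|D-g_4^1|=\dim|D|-2$ is correct and is essentially the paper's argument: Lemma \ref{Eisenbud} applied to $g_4^1$ plus $|D-g_4^1|$ rules out $j=1$ (which would make $|D|$ of type 1), and applied on the residual side rules out $j=3$ (type 2), with $j\leq 3$ obtained from Riemann--Roch and superadditivity --- this cleanly absorbs the case $j=4$, which the paper excludes by a separate (but equivalent) remark. The one technical point to tidy is that Lemma \ref{Eisenbud} must be applied to the base-point-free \emph{moving parts} of $|D-g_4^1|$ and of $|K-D|$, not to these systems themselves, and the residual base divisor $F$ then has to be disposed of (for $j=1$ this is immediate since $|D|=|r\,g_4^1+F|$ and $|D|$ is base point free, so $F=0$).

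The base-point-freeness claim is where you have a genuine gap, and you flag it yourself. Your Eisenbud step correctly reduces to $|D|=|(r-1)g_4^1+F|$ with $F\neq 0$ (note that $F$ is $P$ \emph{plus} the base divisor of $|D-P-g_4^1|$, which you drop), but the route you then propose --- showing $P$ lies in the base locus of $|K-(r-1)g_4^1|$ and ``comparing base divisors'' to force $|K-D|$ to be of type 2 --- does not go through: knowing $K-D\sim K-(r-1)g_4^1-F$ with $F$ in that base locus says nothing about the moving part of $|K-D|$ being composed of $g_4^1$; it could perfectly well be simple. The paper's contradiction is instead purely numerical: substitute $d=4(r-1)+f$ (with $f=\deg F\geq 1$) into the very special equality $r=\frac{1}{2}(d-\delta(D))+k+1$ to get $2r=2-f+\delta(D)-2k$; if $r$ is odd then $\delta(D)=\delta(F)\leq f$ forces $r\leq 1$, and if $r$ is even then $r\geq 4$ by Theorem \ref{dim2} while $\delta(D)\leq\delta(F)+4$ forces $r\leq 3$ --- a contradiction either way. (Equivalently, one concludes $\dim|E+g_4^1|=r$ for the moving part $|E|$ of $|D-g_4^1|$, so the base divisor of $|D-g_4^1|$ is contained in that of $|D|$, hence is zero.) You should replace your final ``delicate structural step'' by this computation.
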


\begin{proof}
Set $r=\dim |D|$ and $d=\deg (D)$. We have $$r-4\leq\dim |D-g_4^1|<r$$ since 
$|D|$ is base point free. Remark that $r\geq 3$.

Assume $\dim |D-g_4^1|=r-1$. Then 
$\dim |D-g_4^1+g_4^1|=r=\dim |D-g_4^1|+\dim g_4^1$.
Let $|E|$ denote the moving part of $|D-g_4^1|$ and let $F$ be the base
divisor of $|D-g_4^1|$. We have 
$$\dim |E|+\dim g_4^1\leq \dim |E+g_4^1|\leq 
\dim |D-g_4^1+g_4^1|=\dim |E|+\dim g_4^1$$ 
hence $$\dim |E|+\dim g_4^1= \dim |E+g_4^1|.$$ 
By Lemma \ref{Eisenbud} we
get $|E|=(r-1).g_4^1$. Hence $|D|=|r.g_4^1+F|$ and $F=0$ since $|D|$ is
base point free. Therefore, $|D|$ is a very special linear system
of type 1 and by Proposition \ref{type1}, we get $r=1$, a contradiction.

Assume $\dim |D-g_4^1|=r-4$. By Riemann-Roch we get $\dim |K-(D-g_4^1)|=\dim
|K-D+g_4^1|= \dim |D-g_4^1|-(d-4)+g-1=r-4-d+4+g-1=r-d+g-1=\dim |K-D|$.
It is impossible since $\dim
|K-D+g_4^1|\geq \dim |K-D|+1$.

Assume $\dim |D-g_4^1|=r-3$. By Riemann-Roch $\dim
|K-(D-g_4^1)|=r-d+g$ i.e. 
$\dim
|K-D+g_4^1|=\dim |K-D|+\dim g_4^1$. 
Let $|E|$ denote the moving part of $|K-D|$ and let $F$ be the base
divisor of $|K-D|$. Set $r'=\dim |K-D|=r-d+g-1$.
By Lemma \ref{residuel} and Lemma \ref{basepointfreepart}, 
$E$ is very special.
We have $\dim |E+g_4^1|\leq \dim
|K-D+g_4^1|=\dim |K-D|+1=\dim |E|+1$. We also have 
$\dim |E+g_4^1|\geq \dim |E|+1$. By Lemma \ref{Eisenbud},
$|E|=r'.g_4^1$ and then $|K-D|=r'.g_4^1+F$ ($F$ is the fix part).
Therefore, $|D|$ is a very special linear system of type 2, a
contradiction with the hypotheses.

We prove now that $|D-g_4^1|$ is base point free. 
Let $|E|$ denote the moving part of $|D-g_4^1|$ and let $F$ be the base
divisor of $|D-g_4^1|$. Let $e$ (resp. $f$) denote the degree of $E$
(resp. $F$). 
We have $\dim |E|=r-2$ and $\dim |E+g_4^1|\geq r-1$.\\
Assume $\dim |E+g_4^1|= r-1$. By Lemma \ref{Eisenbud}, $E=(r-2).g_4^1$
and thus $|D|=|(r-1).g_4^1+F|$. We get 
$$r=\frac{1}{2}(4r-4+f-\delta(D))+k+1$$ i.e.
$$2r=2-f+\delta(D)-2k.$$ If $r$ is odd then $\delta(D)=\delta(F)$ and
we get $$2r=2-f+\delta(F)-2k$$ i.e. $r\leq 1$ since $f\geq \delta(F)$,
contradicting the hypotheses. If $r$ is even then $r\geq 4$ by
Theorem \ref{dim2}. Since $\delta(D)\leq \delta(F)+4$ then
$$2r\leq 6-f+\delta(F)-2k$$ i.e. $r\leq 3$, contradiction.\\
We have proved that $\dim |E+g_4^1|\geq r$ and, since $\dim |E+g_4^1|\leq 
\dim |D|=r$, we get $\dim |E+g_4^1|= r$. Therefore $F$ is contained
in the base divisor of $|D|$ i.e. $F=0$.
\end{proof}

\begin{prop}
\label{+g41}
Let $|D|$ be a primitive very special linear system such that $|K-D|$
is not a pencil and $|D|$ is different from the $g_4^1$.
Then $$\dim |D+g_4^1|=\dim |D|+2$$
and $|D+g_4^1|$ is also primitive.
\end{prop}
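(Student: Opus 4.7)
The plan is to reduce the proposition to Proposition \ref{-g41} applied to the residual system $|K-D|$, and then transfer the conclusion back via Riemann--Roch and Lemma \ref{residuel}.

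First I would verify that $|K-D|$ satisfies the hypotheses of Proposition \ref{-g41}. Since $|D|$ is primitive, $|K-D|$ is base point free; by Lemma \ref{residuel} it is very special; by hypothesis $|K-D|$ is not a pencil. It remains to check that $|K-D|$ is not of type 2 for the $g_4^1$. But ``$|K-D|$ of type 2'' means that $|K-(K-D)|=|D|$ is composed of the $g_4^1$; combined with the fact that $|D|$ is base point free, this would force $|D|=r\cdot g_4^1$ with $r=\dim|D|$, i.e.\ $|D|$ is of type 1. Proposition \ref{type1} would then give $|D|=g_4^1$, which is excluded by hypothesis. Hence Proposition \ref{-g41} applies to $|K-D|$ and yields
$$\dim|K-D-g_4^1|=\dim|K-D|-2,$$
and moreover $|K-D-g_4^1|=|K-(D+g_4^1)|$ is base point free.

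Next I would transfer this to $|D+g_4^1|$ using Riemann--Roch. Writing $h^0(D+g_4^1)-h^0(K-D-g_4^1)=\deg(D)+4-g+1$ and $h^0(D)-h^0(K-D)=\deg(D)-g+1$, and subtracting, gives
$$\bigl(h^0(D+g_4^1)-h^0(D)\bigr)-\bigl(h^0(K-D-g_4^1)-h^0(K-D)\bigr)=4.$$
Since the second difference equals $-2$ by the previous step, the first difference equals $2$, proving $\dim|D+g_4^1|=\dim|D|+2$.

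Finally, primitivity of $|D+g_4^1|$ requires that both $|D+g_4^1|$ and $|K-D-g_4^1|$ are base point free. The latter is exactly what Proposition \ref{-g41} gave us. For the former, I would observe that both $|D|$ and $|g_4^1|$ are base point free; given any closed point $P$, choose $D'\in|D|$ and $E\in g_4^1$ not passing through $P$, so that $D'+E\in|D+g_4^1|$ avoids $P$. Hence $|D+g_4^1|$ is base point free as well, which completes the proof. No step here is really the main obstacle — the key conceptual move is simply the symmetry between ``$+g_4^1$'' on $|D|$ and ``$-g_4^1$'' on $|K-D|$, and the small bookkeeping check that type 2 for $|K-D|$ is equivalent to type 1 for $|D|$.
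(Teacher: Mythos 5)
Your proposal is correct and follows essentially the same route as the paper: check that $|K-D|$ meets the hypotheses of Proposition \ref{-g41} (with type 2 for $|K-D|$ ruled out because it would force $|D|$ to be of type 1, hence $|D|=g_4^1$ by Proposition \ref{type1}), apply that proposition, and transfer back by Riemann--Roch, finishing with the observation that a sum of base point free systems is base point free. Your write-up only adds a little more explicit bookkeeping than the paper's.
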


\begin{proof}
The linear system $|K-D|$ is base point free, very special and it is
not a pencil. If $|K-D|$ is of type 2 then $|D|$ is of type 1 and we
get $|D|=g_4^1$ by Proposition \ref{type1}, a contradiction with the
hypotheses. We may apply Proposition \ref{-g41} for $|K-D|$ and we get 
$$\dim |K-D-g_4^1|=\dim |K-D|-2$$ and $|K-(D+g_4^1)|$ is base point
free.
By Riemann-Roch, we get $\dim |D+g_4^1|=\dim |D|+2$. To finish the
proof, we remark that $|D+g_4^1|$ is base point free since $|D|$ and
$g_4^1$ are both base point free.
\end{proof}

\begin{thm} (Very special linear systems on a 4-gonal curve with
  $\delta (g_4^1)=0$)
\label{deltag41=0}\\
Let $X$ be a real curve with a fixed $g_4^1$ with $\delta (g_4^1)=0$
and such that $X_{\CC}$ is 4-gonal. Let $|D|$ be a very special linear
system of dimension $r$ on $X$ then\\
$\bullet$ $D$ is primitive.\\
$\bullet$ $r$ is odd, $\ind (D)=0$ and $\delta(D)=s$.\\
$\bullet$ $|D|=|\frac{r-1}{2}.g_4^1+g_s^1|$ with $g_s^1$ a very
special pencil.
\end{thm}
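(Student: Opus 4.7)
The plan is to establish the statement first for primitive very special linear systems by strong induction on $r=\dim|D|$, and then extend it to an arbitrary very special $|D|$ via Lemma \ref{basepointfreepart} and Corollary \ref{primitivehull}. The key preliminary observation is that the hypothesis $\delta(g_4^1)=0$ forces the fixed $g_4^1$ to be non-very-special, since a very special pencil $g_4^1$ would satisfy $\delta(g_4^1)=s=4$. Propositions \ref{type1} and \ref{type2} therefore rule out any base-point-free very special linear system of type 1 or type 2 for the fixed $g_4^1$, so that Proposition \ref{-g41} applies as soon as $|D|$ is base-point-free, very special, and not a pencil.

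For primitive $|D|$, the base case $r=1$ is Proposition \ref{veryspecialpencil}, which directly gives $|D|=g_s^1$ with $\ind(D)=0$ and $\delta(D)=s$ (matching the claim with $(r-1)/2=0$); the case $r=2$ is excluded by Theorem \ref{dim2}. For $r\geq 3$, Proposition \ref{-g41} produces a base-point-free system $|D-g_4^1|$ of dimension $r-2$. Since $|K-D|$ and $g_4^1$ are both base-point-free, so is their sum $|K-D+g_4^1|$, so $|D-g_4^1|$ is in fact primitive. A direct numerical check using $\delta(g_4^1)=0$ (so that $\delta(D-g_4^1)=\delta(D)$ and $\deg(D-g_4^1)=\deg(D)-4$) shows that $|D-g_4^1|$ is still very special with the same index $k$ and the same $\delta$. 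If $r=3$, Proposition \ref{veryspecialpencil} identifies the resulting pencil as $g_s^1$ and gives $|D|=|g_4^1+g_s^1|$; if $r=4$, the resulting dimension-$2$ very special system contradicts Theorem \ref{dim2}, so this case is excluded; in the remaining cases $r\geq 5$ the inductive hypothesis yields $|D-g_4^1|=|\tfrac{r-3}{2}g_4^1+g_s^1|$ and so $|D|=|\tfrac{r-1}{2}g_4^1+g_s^1|$. In particular $r$ is odd, $\ind(D)=0$ and $\delta(D)=s$.

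For the general case, let $|D''|=|D-F|$ be the base-point-free part of $|D|$ and $|D'''|=|D''+E|$ its primitive hull. By Lemma \ref{basepointfreepart} and Corollary \ref{primitivehull}, $|D'''|$ is very special with $\ind(D''')\geq\ind(D'')\geq\ind(D)\geq 0$. Applying the primitive case to $|D'''|$ yields $\ind(D''')=0$ and $\delta(D''')=s$, so $\ind(D)=\ind(D'')=0$. Corollary \ref{primitivehull} then forces $E$ to be a sum of real points lying on distinct connected components on which $D''$ has odd restriction; this gives $\delta(D'')=\delta(D''')+\deg E=s+\deg E$, and combined with $\delta(D'')\leq s$ one obtains $E=0$, i.e.\ $|D''|$ is already primitive. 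The analogous argument with Lemma \ref{basepointfreepart} applied to $F$ then yields $F=0$, so $|D|$ itself is primitive and the primitive classification completes the proof.

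The main obstacle is verifying that the $-g_4^1$ step preserves primitivity (not merely base-point-freeness), since this is what powers the induction; it rests on combining the base-point-freeness of $|K-D|$ with that of the fixed $g_4^1$. Once the primitive classification is in hand, the parity/inequality argument ruling out base divisors in the non-primitive case is essentially routine.
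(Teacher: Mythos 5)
Your proof is correct and follows essentially the same route as the paper: rule out types 1 and 2 via Propositions \ref{type1} and \ref{type2}, peel off copies of the $g_4^1$ with Proposition \ref{-g41} until Theorem \ref{dim2} and Proposition \ref{veryspecialpencil} force a very special pencil $g_s^1$, then kill the base divisor with Lemma \ref{basepointfreepart}. The only difference is organizational --- you run a strong induction on primitive systems (adding the easy observation that $|K-D+g_4^1|$ is base point free as a sum of base point free systems) where the paper simply iterates on base point free systems, and you treat the primitive hull explicitly via Corollary \ref{primitivehull}; both variants are sound.
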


\begin{proof}
We note that a very special linear system on $X$ can not be of type 1 and
can not be of
type 2.
Let $|D|$ be a base point free very special linear system which is not
a pencil. Then $\dim |D-g_4^1|=\dim |D|-2$ by Proposition \ref{-g41}
and it is easy to see that $|D-g_4^1|$ is a base point free very
special linear system of index $\ind (D)$. If $|D-g_4^1|$ is not a
pencil, we continue the same process, and so on, and by Theorem
\ref{dim2} it follows that $D-\frac{r-1}{2}g_4^1$ is a very special
pencil $g_s^1$. We also obtain that $\ind (D)=\ind (g_s^1)=0$ and
$\delta(D)=\delta(g_s^1)= s$.
Since the index of any base point free very special linear system is
null, it follows from Lemma \ref{basepointfreepart} that the index 
of any very special linear system is also 
null. Since the index is always null and the $\delta$ invariant of any
base point free very special linear system is equal to $s$, it follows
from Lemma \ref{basepointfreepart} that the base divisor of any very
special linear system is also
null. 
\end{proof}

\begin{defn}
We say that a non-trivial linear system $|D|$ such that $\dim |D|=r$ 
is of type 3 (for the
$g_4^1$) if $$|D|=|(r-1).g_4^1+F|$$ with $F$ effective. Note then
that $\dim |F|\leq 1$, and for $F\not= 0$ we have 
$\dim (r-1).g_4^1=r-1$.
\end{defn}

\begin{prop}
\label{type3}
Let $|D|$ be a base point free very special linear system of type
3. Then $|D|$ is a very special pencil i.e. a $g_s^1$ with
$\delta(g_s^1)=s$.
\end{prop}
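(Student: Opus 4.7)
The plan is to show that $r=\dim |D|$ must equal $1$; Proposition \ref{veryspecialpencil} will then immediately yield the conclusion. Writing $d=\deg D$, $k=\ind(D)$, and $f=\deg F$, the type~3 hypothesis gives $d=4(r-1)+f$. Substituting this into the very-special relation $r=\tfrac{1}{2}(d-\delta(D))+k+1$ produces the key identity
$$\delta(D)=f+2k+2r-2. \eqno{(\ast)}$$

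I would then split on the parity of $r$. If $r$ is odd, then $(r-1)g_4^1$ has even degree on every real connected component, so on each component the parity of the restriction of $D$ matches that of $F$; hence $\delta(D)=\delta(F)\leq f$, and $(\ast)$ forces $2k+2r-2\leq 0$, giving $r\leq 1$. If $r$ is even, Theorem \ref{dim2} rules out $r=2$, so $r\geq 4$; now $r-1$ is odd, so on each real component the parity of $(r-1)g_4^1$ agrees with that of $g_4^1$, and a component contributes to $\delta(D)$ if and only if exactly one of $g_4^1,F$ has odd restriction there. This yields $\delta(D)\leq \delta(g_4^1)+\delta(F)\leq 4+f$, whereupon $(\ast)$ forces $r\leq 3$, contradicting $r\geq 4$.

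Combining the two cases, $r=1$, so $|D|$ is a very special pencil and Proposition \ref{veryspecialpencil} delivers $|D|=g_s^1$ with $\delta(g_s^1)=s$. The only delicate point is the parity bookkeeping used to bound $\delta(D)$ in terms of $\delta(g_4^1)$ and $\delta(F)$; the remaining steps are a short arithmetic manipulation, and the appeal to Theorem \ref{dim2} to push past $r=2$ in the even case is the crucial input that prevents the argument from leaving a spurious possibility open.
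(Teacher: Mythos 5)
Your proof is correct and follows essentially the same route as the paper's: the identity $\delta(D)=f+2k+2r-2$, the parity split on $r$ with $\delta(D)=\delta(F)\leq f$ in the odd case and $\delta(D)\leq\delta(F)+4\leq f+4$ in the even case, and Theorem \ref{dim2} to force $r\geq 4$ when $r$ is even. The only cosmetic difference is that you merge the paper's separate treatment of $F=0$ and $F\neq 0$ into one computation.
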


\begin{proof}
Let $d$ be the degree of $D$ and let $k$ be the index of $D$.
We have $$\dim |D|=r=\frac{1}{2}(d-\delta (D))+k+1.$$

Assume first that $F=0$. 
Since $d=4r-4$ we get $$2r=2+\delta(D)-2k.$$ 
If $r$ is odd then $\delta(D)=0$ and we get $r=1-k$, impossible.
If $r$ is even then $r\geq 4$ by  Theorem \ref{dim2}.
Since $\delta(D)\leq 4$ we get $r\leq 3-k$, again impossible.

Assume now that $F\not= 0$ and let $f$ denote its degree. Since $d=4r-4+f$
we get $$2r=2-f+\delta(D)-2k.$$
If $r$ is odd then $\delta (D)=\delta(F)$ and we have
$2r=2-f+\delta(F)-2k$. Since $f\geq \delta(F)$ it follows that $r=1$,
$\delta(F)=f$ and $|D|=|F|$ is a very special pencil.
If $r$ is even then $r\geq 4$ by  Theorem \ref{dim2}. Since
$\delta(D)\leq \delta(F)+4$ we get $2r\leq 6+\delta(F)-f-2k$
i.e. $r\leq 3$, impossible.
\end{proof}

\begin{prop}
\label{nonsimpledeltag41=2}
Let $X$ be a real curve with a fixed $g_4^1$ with $\delta (g_4^1)=2$
and such that $X_{\CC}$ is 4-gonal. If $|D|$ is a base point free
non-simple very
special linear system on $X$ then $|D|$ is a pencil.
\end{prop}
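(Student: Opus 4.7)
The plan is to argue by contradiction: assume $r := \dim|D| \geq 2$ and derive a contradiction using the rigid structure supplied by Theorem~\ref{nonsimple}. That theorem gives $r$ odd with $r \geq 3$, $\delta(D)=s$, $\ind(D)=0$ (so $d = 2r + s - 2$), a degree-$2$ morphism $\varphi: X \to X'$ with $|D| = \varphi^{*}|D'|$ and $|D'|$ base point free non-special of dimension $r$ on $X'$, where $X'$ is an $M$-curve of genus $g' = (s-2)/2$ with $s' = s/2$ real components and $\delta(D') = s'$.

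First I would pin down the real topology of $\varphi$. Each real component of $X'$ is covered in $X(\RR)$ by zero components, one component (mapping $2$-to-$1$), or two components (mapping bijectively). A $2$-to-$1$ covering would give even $\deg(\varphi^{*}D'|_{C}) = 2\deg(D'|_{C'})$, contradicting $\delta(D) = s$; combining with the numerical identity $s' = s/2$ then forces every real component of $X'$ to split into two real components of $X$. Hence $\delta(\varphi^{*}F') = 2\delta(F')$ for every real divisor $F'$ on $X'$.

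Let $|E|$ denote the given $g_4^1$ with $\delta(E) = 2$; I split into two cases according to whether $|E|$ is pulled back from $X'$. In Case~A, $|E| = \varphi^{*}|E'|$ with $|E'|$ a $g_2^1$ on $X'$; the formula $\delta(E) = 2\delta(E')$ then forces $\delta(E') = 1$, contradicting the elementary parity fact that $\delta(F') \in \{0,2\}$ for any effective real divisor $F'$ of degree $2$ (its two real points either lie in the same component, in distinct components, or it is a conjugate pair or doubled real point).

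The main obstacle is Case~B, where $|E|$ is not pulled back. Then $\phi_{|E|}: X \to \PP^1$ does not factor through $\varphi$, so the product morphism $f = (\varphi, \phi_{|E|}): X \to X' \times \PP^1$ is birational onto its image $Y$, and $Y$ has bidegree $(2,4)$ on $X' \times \PP^1$. The key computation is adjunction on the smooth surface $S = X' \times \PP^1$: writing $Y \equiv 2A + 4B$ in the horizontal and vertical fiber classes and $K_S \equiv (2g'-2)B - 2A$, one obtains $Y^2 = 16$, $Y \cdot K_S = 4g' - 12$, and hence $p_a(Y) = 3 + 2g' = s + 1$. Since $X$ is the normalization of $Y$, this gives $g \leq s+1$. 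On the other hand $|D|$ is special, so by Riemann-Roch $g \geq d - r + 1 = r + s - 1$; together these force $r \leq 2$, contradicting $r \geq 3$. Both cases yield contradictions, so $|D|$ must be a pencil.
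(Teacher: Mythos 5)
Your proof is correct, and it shares the paper's skeleton: both arguments reduce via Theorem~\ref{nonsimple} to a degree-two covering $\varphi:X\to X'$ onto an $M$-curve of genus $g'=\frac{s}{2}-1$ with $s'=\frac{s}{2}$, and both split on whether the $g_4^1$ is induced by $\varphi$, killing the induced case by the parity observation $\delta(\varphi^*g_2^1)=2\delta(g_2^1)\in\{0,4\}\neq 2$. The divergence is in the non-induced case. The paper quotes \cite[Cor. 2.2.2]{CKM} to get $4\geq g-2g'+1$, i.e.\ $g\leq 2g'+3=s+1$, combines this with Huisman's bound $s\leq g-1$ to force $s=g-1$, and then invokes Proposition~\ref{M-2} to conclude that $|D|$ would have to be a pencil. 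You instead prove the bound $g\leq 2g'+3$ by hand: ``not induced'' means the degree of $X\to Y:=(\varphi,\varphi_{g_4^1})(X)$ is $1$ rather than $2$ (since it divides $\deg\varphi=2$, and degree $2$ would make the normalization of $Y$ isomorphic to $X'$ and give a factorization), and adjunction for a curve of class $2A+4B$ on $X'\times\PP^1$ yields $p_a(Y)=2g'+3=s+1\geq g$ --- this is precisely the standard proof of the Castelnuovo--Severi inequality, of which the CKM corollary is an instance. You then close with the direct estimate $g\geq d-r+1=r+s-1\geq s+2$ coming from speciality of $D$ and $d=2r+s-2$, rather than routing through Huisman's inequality and Proposition~\ref{M-2}. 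Your numbers ($Y^2=16$, $Y\cdot K_S=4g'-12$, $p_a(Y)=s+1$) check out, and your preliminary step recovering the splitting of the real components from $\delta(D)=s$ and $s'=s/2$ is sound (the paper simply extracts this from the proof of Theorem~\ref{nonsimple}). What your route buys is self-containedness --- no appeal to \cite{CKM}, to Theorem~\ref{Huisman}, or to Proposition~\ref{M-2} --- at the cost of an explicit intersection-theoretic computation; the paper's version is shorter on the page but outsources the same Castelnuovo--Severi input to the literature.
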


\begin{proof}
Let $|D|$ be a base point free
non-simple very
special linear system on $X$ such that $\dim |D|>1$. By Theorem
\ref{nonsimple}, $\varphi_{|D|}:X\rightarrow X'$ has degree two and
$X'$ is an M-curve of genus $g'=\frac{s}{2}-1$ and the inverse image of
any connected component of $X'(\RR)$ is a disjoint union of two
connected components of $X(\RR)$. By Propositions \ref{s=2} and
\ref{s=3}, we have $s\geq 4$. By Theorem \ref{Huisman} we get $4\leq
s\leq g-1$.
Assume the $g_4^1$ is not induced by $X'$ i.e. $g_4^1$ is not of the
form $\varphi_{|D|}^*(g_2^1)$ for a $g_2^1$ on $X'$. By
\cite[Cor. 2.2.2]{CKM}, we must have $4\geq g-2g'+1$ and $g'\geq 1$. 
Since $s=2g'+2$ and $g\leq 2g'+3$ we obtain $s\geq g-1$.
By Theorem \ref{Huisman} and Proposition \ref{M-2}, we get 
$s=g-1$ and $|D|$ is a pencil, impossible.
Hence $g_4^1=\varphi_{|D|}^*(g_2^1)$ for a $g_2^1$ on $X'$. 
Thus $\delta(g_4^1)\not= 2$, a contradiction.
\end{proof}

\begin{lem}
\label{dim3deltag41=2}
Let $X$ be a real curve with a fixed $g_4^1$ with $\delta (g_4^1)=2$
and such that $X_{\CC}$ is 4-gonal. If $|D|$ is a very
special linear system on $X$ then $$\dim |D|\not=3.$$
\end{lem}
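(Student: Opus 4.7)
The plan is to assume for contradiction that a very special $|D|$ has $\dim|D|=3$, to realize $\varphi_{|D|}(X)$ on a real smooth quadric in $\PP^3$, and then to derive a contradiction from Proposition \ref{quadrique} by comparing $\delta$-invariants.

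The first step is to cascade the existing structural results. Proposition \ref{dim3} gives that $|D|$ is base point free of degree $d=s+4$, with $\delta(D)=s$ and $\ind(D)=0$; Proposition \ref{nonsimpledeltag41=2} forces $|D|$ to be simple, since otherwise $|D|$ would have to be a pencil. Moreover, $|D|$ is not of type 2, for by Proposition \ref{type2} this would make the fixed $g_4^1$ very special and hence force $\delta(g_4^1)=4$, contradicting $\delta(g_4^1)=2$. Proposition \ref{-g41} therefore applies and yields that $A:=|D-g_4^1|$ is a base point free pencil of degree $s$; a parity check on each real connected component of $X(\RR)$, using $\delta(D)=s$ and $\delta(g_4^1)=2$, then gives $\delta(A)=s-2$.

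The main obstacle is to show that $\varphi_{|D|}(X)$ lies on a real irreducible quadric. To this end, I would combine $A$ and the fixed $g_4^1$ into a morphism $(\varphi_A,\varphi_{g_4^1}):X\to\PP^1_{\RR}\times\PP^1_{\RR}$ and compose with the Segre embedding; the resulting morphism $X\to\PP^3$ is given by the four products $\alpha_i\beta_j\in H^0(D)$, where $\{\alpha_i\}$ and $\{\beta_j\}$ are bases of $H^0(A)$ and $H^0(g_4^1)$. By the base point free pencil trick, the kernel of the multiplication $H^0(A)\otimes H^0(g_4^1)\to H^0(D)$ is $H^0(A-g_4^1)$, and this group vanishes: for $s>4$, any non-zero effective divisor in $|A-g_4^1|$ would be a base divisor of the base point free pencil $|A|$ (since multiplication by such a section induces a linear isomorphism $|g_4^1|\to|A|$), a contradiction; for $s=4$ one would instead get $A\sim g_4^1$ and hence $D\sim 2g_4^1$, making $|D|$ of type 1 and so a pencil by Proposition \ref{type1}, contradicting $\dim|D|=3$. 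Therefore the multiplication is an isomorphism of $4$-dimensional spaces, $\varphi_{|D|}$ coincides with the Segre construction up to a real projective automorphism of $\PP^3$, and $\varphi_{|D|}(X)$ lies on a real smooth quadric $Q$ whose two rulings pull back on $X$ to $A$ and the fixed $g_4^1$.

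Finally, Proposition \ref{quadrique} applied to $\varphi_{|D|}(X)\subset Q$ tells us that the pullbacks of the two rulings of $Q$ are pencils of degrees $s$ and $4$ with $\delta$-invariants $s$ and $0$ respectively. By the previous paragraph these pullbacks are exactly $A$ and the fixed $g_4^1$, whose $\delta$-invariants are $s-2$ and $2$. Neither matching of the unordered pairs respects the $\delta$-invariants (whether $s=4$ or $s>4$), and this contradiction completes the proof.
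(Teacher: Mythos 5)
Your proof is correct and follows essentially the same route as the paper's: Proposition \ref{dim3}, then Proposition \ref{nonsimpledeltag41=2} to get simplicity, then Proposition \ref{-g41} to write $|D|=|g_4^1+g_s^1|$ with $\delta(g_s^1)=s-2$, then placement on a quadric and a $\delta$-invariant contradiction with Proposition \ref{quadrique}. The only difference is that you prove the quadric containment by hand via the base point free pencil trick (including the type~1/type~2 exclusions), where the paper simply cites a reference; this is a welcome expansion of detail rather than a new argument.
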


\begin{proof}
Assume $|D|$ is base point free and very special with $\deg(D)=d$ and 
$\dim |D|=3$. From Proposition \ref{dim3}, it follows that $|D|$ is
base point free and that $\delta (D)=s$.
By Proposition \ref{nonsimpledeltag41=2}, $|D|$ is simple.
By Proposition \ref{-g41}, $|D|=|g_4^1+g_s^1|$ with
$\delta(g_s^1)=s-2$. 
But then
$\varphi_{|D|}(X)$ is contained in a quadric surface of $\PP^3$ (see
\cite[Lem. 1.5]{K} for example). By Proposition \ref{quadrique}, $|D|$
cannot be very special, a contradiction.
\end{proof}

\begin{prop}
\label{pinceaudeltag41=2}
Let $X$ be a real curve with a fixed $g_4^1$ with $\delta (g_4^1)=2$
and such that $X_{\CC}$ is 4-gonal. If $X$ has a very special pencil
$g_s^1$ 
then $s=g-1$ and any very special linear system on $X$ is a
pencil.
\end{prop}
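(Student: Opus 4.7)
The plan is to prove both conclusions simultaneously by showing that any very special linear system on $X$ must be a pencil; the equality $s=g-1$ then follows by applying this to $|K-g_s^1|$. Indeed, $g_s^1$ is primitive by Proposition \ref{veryspecialpencil}, so $|K-g_s^1|$ is base point free and, by Lemma \ref{residuel}, very special of dimension $g-s$ by Riemann-Roch; being forced to be a pencil then gives $g-s=1$. (The case $g-s=0$ is excluded by Huisman's Theorem \ref{Huisman} applied to the $(M-1)$-curve.)

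Suppose $|D|$ is a very special linear system on $X$ that is not a pencil. By Lemma \ref{basepointfreepart} one may replace $|D|$ by its base-point-free part and assume $|D|$ is base point free. Theorem \ref{dim2} excludes $\dim|D|=2$ and Lemma \ref{dim3deltag41=2} excludes $\dim|D|=3$, so $\dim|D|\geq 4$. By Proposition \ref{nonsimpledeltag41=2}, $|D|$ must be simple. Furthermore, $|D|$ cannot be of type $1$, since Proposition \ref{type1} would then force $\delta(g_4^1)=4$, contradicting $\delta(g_4^1)=2$; similarly $|D|$ is not of type $2$ by Proposition \ref{type2}. Hence Proposition \ref{-g41} applies and yields the base-point-free system $|D-g_4^1|$ of dimension $\dim|D|-2$.

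The core step is to iterate this descent. A case analysis on the three possible values $\delta(D-g_4^1)\in\{\delta(D)-2,\delta(D),\delta(D)+2\}$, together with the index of $D$, determines whether $|D-g_4^1|$ remains very special. When it does, Proposition \ref{-g41} may be reapplied; when it fails (the Clifford-boundary case, where $\dim|D-g_4^1|=\tfrac{1}{2}(\deg-\delta)$), one passes to the residual $|K-D+g_4^1|$, uses Proposition \ref{+g41} to control its dimension, and invokes Lemma \ref{dim3deltag41=2} to rule out the resulting dimension-three configuration. The descent terminates at a very special pencil which, by Proposition \ref{veryspecialpencil}, must coincide with $g_s^1$. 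Lemma \ref{Eisenbud} then forces $|D|=|ng_4^1+g_s^1|$ for some $n\geq 1$, and computing the $\delta$-invariant of this sum (which flips by $2$ with the parity of $n$, since $\delta(g_4^1)=2$ and $\delta(g_s^1)=s$) against the required equality $\dim|D|=\tfrac{1}{2}(\deg|D|-\delta(D))+\mathrm{ind}(D)+1$ produces the sought contradiction.

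The main obstacle is precisely this iterated descent: one must keep careful bookkeeping of the index and of the $\delta$-invariant through each application of Proposition \ref{-g41}, and handle with care the Clifford-boundary case where $|D-g_4^1|$ ceases to be very special, by switching to residuation via Proposition \ref{+g41} and exploiting the fact that no very special system of dimension $3$ exists under our hypotheses.
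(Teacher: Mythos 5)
Your overall strategy is genuinely different from the paper's, and it has a gap that I do not see how to close. The paper never tries to show directly that a non-pencil very special $|D|$ is impossible by descent alone; instead it uses the hypothesis that a very special pencil $g_s^1$ exists to form the base point free system $|g_s^1+g_4^1|$, which has dimension $3$ by Proposition \ref{+g41}, and then extracts $s=g-1$ from the geometry of its image: in the simple case $\varphi_{|g_s^1+g_4^1|}(X)$ is a bi-degree $(a,b)$ curve on a quadric whose component types force $a'=s-2$, $b'=0$, $c'=2$ and hence $g\leq s+1$ by the genus formula; in the non-simple case one analyses the degree-$2$ covering $X\to X'$ onto an $M$-curve and invokes \cite[Cor. 2.2.2]{CKM}. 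Only after $s=g-1$ is established does ``every very special system is a pencil'' follow, via Proposition \ref{M-2}. In your proposal the hypothesis that $X$ carries a very special pencil is never substantively used, which is already a warning sign: you are in effect trying to prove the stronger Theorem \ref{deltag41=2} by pure combinatorics, whereas the paper's proof of that theorem itself terminates by appealing to the present proposition.

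The concrete failure is in your last step. If the descent ends with $D\sim n\,g_4^1+g_s^1$, then $\deg D=4n+s$, $\dim|D|=2n+1$, and since $\delta(g_4^1)=2$ one gets $\delta(D)=s-2$ for $n$ odd but $\delta(D)=s$ for $n$ even. For $n$ odd the identity $\dim|D|=\frac{1}{2}(\deg D-\delta(D))+k+1$ indeed gives $k=-1$, a contradiction; but for $n$ even it gives exactly $k=0$, which is perfectly consistent with $|D|$ being very special. So, for example, $|D|=|2g_4^1+g_s^1|$ with $\dim|D|=5$ survives your bookkeeping, and nothing in your argument excludes it. (A secondary issue: in the Clifford-boundary step $\delta(D-g_4^1)=\delta(D)-2$, the system $|D-g_4^1|$ is no longer very special, so Proposition \ref{-g41} cannot simply be reapplied; the paper's Theorem \ref{deltag41=2} needs a separate Riemann--Roch/Eisenbud analysis to pin down $\dim|D-2g_4^1|=\dim|D|-4$, which you only gesture at. Also, Proposition \ref{veryspecialpencil} does not give uniqueness of the very special pencil, so ``must coincide with $g_s^1$'' is unjustified, though harmless for the computation.) To repair the proof you would need to reinstate the geometric input on the $g^3_{s+4}=|g_s^1+g_4^1|$ that the paper uses to force $s=g-1$.
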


\begin{proof}
Let $g_s^1$ be a very special pencil on $X$. 

If $\dim |K-g_s^1|=1$ then we get $s=g-1$ by Riemann-Roch.

For the rest of the proof, we assume $\dim |K-g_s^1|>1$.
We denote by $|D|$ the
base point free
linear system $|g_s^1+g_4^1|$.
By Lemma \ref{+g41}, 
$\dim |g_s^1+g_4^1|=3$ and $|D|=|g_s^1+g_4^1|$ is base point free.

Suppose first that $|D|$ is simple. The curve $X'=\varphi_{|D|}(X)$ is
birational to $X$ and $X'$ is contained in a quadric surface of
$\PP^3$. Thus $X'=\varphi_{|D|}(X)\subset \PP^3$ is a curve of bi-degree
$(a,b)$ on $Q$ and we have
$$a+b=s+4.$$
Arguing as in the proof of Proposition \ref{quadrique}, 
if $a',b',c'$ denote respectively the
number of connected components of type $(1,0)$, $(0,1)$ and $(1,1)$ then
we have $a'+b'= \delta(D)=s-2$, $a'+c'=\delta (g_s^1)=s$ and 
$b'+c'=\delta (g_4^1)=2$. Therefore $a'+b'=a'-b'$ and thus $b'=0$,
$a'=s-2$, $c'=2$. Since the $s-2$ connected components of type $(1,0)$
intersect each connected component of type $(1,1)$, the genus formula
gives $$g\leq 4s -4-s+1-2(s-2)=s+1.$$
By Theorem \ref{cliffreelA}, we get $s=g-1$ and the rest of the proof
in this case follows from Proposition \ref{M-2}.

Suppose now that $|D|$ is not simple. Let $d=s+4$ denote the degree of $D$.
It means that $\varphi_{|D|}$
has some degree $\geq 2$ i.e.
$\varphi=\varphi_{|D|} :X\rightarrow X'$ is a non-trivial covering
map of degree $t\geq 2$ on a real curve $X'$ of genus $g'$. Moreover, 
there exists $D'\in \Div(X')$ of degree $d'=\frac{d}{t}$ such that
$|D'|=g_{d'}^{3}$ and such that $D=\varphi^* (D')$.

Assume $t\geq 3$. Let $Q'+\bar{Q'}$ be a non-real point of
$X'(\RR)$. Let
$D_1=D-\varphi^*(Q'+\bar{Q'})$ and 
denote by $d_1=d-2t$. We may clearly assume $D'-Q'-\bar{Q'}$ effective and 
$\dim |D_1|=1$. Since
$Q'+\bar{Q'}$ 
is non-real,
$\varphi^* (Q'+\bar{Q'})$ is non-real. 
We have  $\delta(D_1)=\delta(D)=s-2$ and 
$D_1$ is clearly a special divisor. We get
$\dim |D_1|=1=\frac{1}{2} (d-\delta(D))-2>\frac{1}{2}
(d_1-\delta(D_1))$, hence $|D_1|$ is a very special pencil, impossible
since $\delta (D_1)\not= s$.

We have $t=2$ and thus $d'=\frac{s}{2}+2$. 
Let $C_1,\ldots,C_{s-2}$ (resp. $C_{s-1},\, C_s$) denote the connected
components of $X(\RR)$ on which the degree of the restriction of $D$
is odd (resp. even).
The image of a connected component of $X(\RR)$ is either a connected
component of $X'(\RR)$ or a closed and bounded interval of a connected
component of $X'(\RR)$. Since $D$ is a union of fibers of
$\varphi_{|D|}$ we get:\\
$\bullet$ for $i=1,\ldots,s-2$, $\varphi(C_i)$ is a connected component of
$X'(\RR)$.\\
$\bullet$  for $i=1,\ldots,s-2$ and for $j=s-1,s$, $\varphi (C_i)\cap
\varphi(C_j)=\emptyset$.\\ 
$\bullet $ for $i=1,\ldots,s-2$, $\varphi^{-1}(\varphi (C_i))$ is
either $C_i$ or $C_i\cup C_{i'}$ for $i'\in \{1,\ldots,s-2\}$ distinct
from $i$.\\
Let $s'$ denote the number of connected components of $X'(\RR)$.
From above remarks, we get
$$s'\geq \frac{s-2}{2}+1=\frac{s}{2}$$ and $$\delta (D')\geq
\frac{s-2}{2}.$$ 
Assume $D'$ is special. We have 
$$\dim |D'|=3=\dim |D|=\frac{1}{2}(d-\delta(D))=d'-\frac{s-2}{2}\geq 
d'-\delta (D')\geq \frac{1}{2}(d'-\delta (D')).$$
Since $\dim |D'|$ is odd, it follows from the above inequalities that $D'$ is
very special. By Proposition \ref{dim3}, $d'=s'+4$ and $\delta(D')=s'$
and thus $\dim |D'|=3=d'-\frac{s-2}{2}\geq 
d'-\delta (D')\geq 4$, a contradiction.\\
Since $D$ is non-special, Riemann-Roch gives $dim
|D'|=3=d'-g'=\frac{s}{2}+2-g'$ i.e.
$$g'=\frac{s}{2}-1.$$
Since $s'\geq \frac{s}{2}$, we get $s'=g'+1=\frac{s}{2}$ by Harnack inequality
i.e. $X'$ is an M-curve. Moreover, from above remarks, it follows that
there exist $g'$ connected components of $X'(\RR)$ such that the
inverse image by $\varphi$ of each of these components 
is a union of two connected components of
$X(\RR)$ among $C_1,\ldots,C_{s-2}$; the connected component of
$X'(\RR)$ that remains contains the image of $C_{s-1}$ and $C_s$.

We know that $4\leq s\leq g-1$ by Theorem \ref{cliffreelA}. 
If the $g_4^1$ is not induced by $\varphi$ then $4\geq g-2g'+1$
(\cite[Cor. 2.2.2]{CKM}) and we find again $s=g-1$.
Now assume $g_4^1=\varphi^*(h_2^1)$ for a $h_2^1$ on $X'$. 
We must consider two cases.\\
$\bullet$ $g'=1$: We have $s'=2$ and $s=4$. The very special pencil
$g_s^1$ is clearly not induced by $\varphi$ since
$\delta(g_s^1)=4$). By \cite[Cor. 2.2.2]{CKM}, we obtain $s=g-1$.\\
$\bullet$ $g'>1$: It follows in that case that $X'$ is an
hyperelliptic M-curve. Therefore there exists $P'\in X'(\RR)$ such
that $h_2^1=|2P'|$ but then $\delta(g_4^1)=0$, a contradiction.
\end{proof}

\begin{thm} (Very special linear systems on a 4-gonal curve with
  $\delta (g_4^1)=2$)
\label{deltag41=2}\\
Let $X$ be a real curve with a fixed $g_4^1$ with $\delta (g_4^1)=2$
and such that $X_{\CC}$ is 4-gonal. Let $|D|$ be a very special linear
system on $X$ then $|D|$ is a pencil and $s=g-1$.
\end{thm}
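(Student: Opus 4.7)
The plan is to reduce to Proposition \ref{pinceaudeltag41=2}: if we can produce any very special pencil on $X$, that proposition yields both $s=g-1$ and the fact that every very special linear system is a pencil. Thus starting from a very special $|D|$, I aim to extract a very special pencil, possibly after dimension-lowering manipulations, and derive a contradiction if $\dim|D|\geq 2$.

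First come the reductions. If $r=\dim|D|\geq 2$, then by Theorem \ref{dim2} and Lemma \ref{dim3deltag41=2} we have $r\geq 4$. By Corollary \ref{primitivehull} we may replace $|D|$ by its primitive hull and assume $|D|$ primitive. By Proposition \ref{nonsimpledeltag41=2}, $|D|$ is simple (otherwise it would be a pencil, contradicting $r\geq 4$). Moreover $|D|$ is neither of type $1$ nor of type $2$, because either case would, via Propositions \ref{type1} and \ref{type2}, force $g_4^1$ to be very special, i.e.\ $\delta(g_4^1)=4$, contradicting $\delta(g_4^1)=2$. All the hypotheses of Proposition \ref{-g41} are therefore satisfied, so $\dim|D-g_4^1|=r-2\geq 2$ and $|D-g_4^1|$ is base point free.

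The inductive core is to show that $|D-g_4^1|$ is again very special, after which the dimension-dropping process can be iterated until it lands on a very special pencil (it cannot land in dimension $2$ or $3$ by Theorem \ref{dim2} and Lemma \ref{dim3deltag41=2}). A direct computation from the defining identity $\dim|E|=\frac{1}{2}(\deg E-\delta(E))+\ind(E)+1$ yields
$$\ind(D-g_4^1)=\ind(D)+\frac{1}{2}(\delta(D-g_4^1)-\delta(D)),$$
and since $\delta(g_4^1)=2$, a parity count gives $\delta(D-g_4^1)\in\{\delta(D)-2,\delta(D),\delta(D)+2\}$. Hence $|D-g_4^1|$ is automatically very special except in the single exceptional case $\ind(D)=0$ and $\delta(D-g_4^1)=\delta(D)-2$, in which it merely attains equality in the Huisman inequality. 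Away from that case, iterating Proposition \ref{-g41} (each $|D-ig_4^1|$ being primitive, simple and not of type $1$ or $2$ by the same arguments) eventually produces a very special pencil, and Proposition \ref{pinceaudeltag41=2} closes the argument.

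The main obstacle is therefore the exceptional case. My proposed route is geometric, in the spirit of the proof of Lemma \ref{dim3deltag41=2} and Proposition \ref{quadrique}: study the image $\varphi_{|D-g_4^1|}(X)\subset \PP^{r-2}$ of the simple, base point free system $|D-g_4^1|$. Either projective constraints (lying on a quadric, or controlled by Castelnuovo-type bounds together with Proposition \ref{criteresimple}, as in the proof of Theorem \ref{s=4}) force a decomposition $|D-g_4^1|=g_s^1+\cdots$ from which the sought-after very special pencil can be read off, or these constraints flatly contradict the standing hypotheses that $X_{\CC}$ is $4$-gonal with $\delta(g_4^1)=2$. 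In either sub-case Proposition \ref{pinceaudeltag41=2} provides the conclusion $|D|$ pencil and $s=g-1$.
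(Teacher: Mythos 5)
Your reduction to Proposition \ref{pinceaudeltag41=2}, the exclusion of types 1 and 2 via $\delta(g_4^1)=2$, and the index bookkeeping $\ind(D-g_4^1)=\ind(D)+\frac{1}{2}(\delta(D-g_4^1)-\delta(D))$ all match the paper's strategy. But there is a genuine gap, and it sits exactly where you locate ``the main obstacle.'' First, a structural point you miss: the exceptional case $\ind(D)=0$, $\delta(D-g_4^1)=\delta(D)-2$ is not a marginal case to be disposed of separately --- it is the \emph{only} case that survives. Your ``away from that case, iterating Proposition \ref{-g41} eventually produces a very special pencil'' cannot happen: subtracting $g_4^1$ drops the dimension by exactly $2$ each time, so to reach a pencil you would have to pass through a very special system of dimension $3$ or $2$, which Theorem \ref{dim2} and Lemma \ref{dim3deltag41=2} forbid. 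In the paper the branches $\delta(D-g_4^1)=\delta(D)+2$ and $\delta(D-g_4^1)=\delta(D)$ are shown to terminate in contradictions (an index-$\geq 1$ system of small dimension, or a would-be very special pencil with $\delta<s$ contradicting Theorem \ref{nonsimple}), which eliminates them; they never yield the desired pencil.

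Second, your treatment of the exceptional case is a sketch, not an argument. Studying $\varphi_{|D-g_4^1|}(X)\subset\PP^{r-2}$ gives you nothing for free: for $r-2\geq 4$ there is no reason the image lies on a quadric, and Proposition \ref{criteresimple} requires $\deg\geq g$, which is not available. Moreover $|D-g_4^1|$ is not very special in this case (it attains equality in (Clif 1)), so Proposition \ref{-g41} cannot be applied to it and the induction stalls. The paper's actual resolution is different in kind: it bounds $r-6\leq\dim|D-2g_4^1|\leq r-3$ and eliminates the values $r-3$, $r-5$, $r-6$ using Eisenbud's Lemma \ref{Eisenbud} (the value $r-3$ forces $|D|$ to be of type 3, excluded by Proposition \ref{type3}; the value $r-5$ forces, via the residual $|K-D+g_4^1|$, a decomposition contradicting Riemann--Roch; the value $r-6$ contradicts $h^1(D-g_4^1)>0$). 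This leaves $\dim|D-2g_4^1|=r-4$; since $2g_4^1$ has even degree on every component, $\delta(D-2g_4^1)=\delta(D)$ and $|D-2g_4^1|$ is again very special of index $0$, so the induction proceeds in steps of $2g_4^1$ rather than $g_4^1$. (The paper also first proves $\ind(D)=0$ for all very special systems by a separate descent.) Without an argument of this sort, your proof does not close.
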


\begin{proof}
According to Proposition \ref{pinceaudeltag41=2}, it is sufficient to
show that $X$ must have a very special pencil in the case $X$ has a very special
linear system.

We note that a very special linear system on $X$ cannot be of type 1 and
cannot be of
type 2.

Let $|D|$ be a base point free very special linear system of degree
$d$ such that
$\ind (D)=k\geq 1$. We have $$\dim |D|=r=\frac{1}{2}(d-\delta(D))+k+1.$$
By Lemma \ref{dim3deltag41=2}, $r\geq 4$. From Proposition \ref{-g41},
it follows that $$\dim |D-g_4^1|=r-2$$ and $|D-g_4^1|$
is base point free. 
Since $\delta(D-g_4^1)-2\leq \delta(D)\leq \delta(D-g_4^1)+2$, we get 
$$\dim |D-g_4^1|=\frac{1}{2}((d-4)-\delta(D))+k+1\geq
\frac{1}{2}(\deg(D-g_4^1)-\delta(D-g_4^1))+(k-1)+1$$ 
and it follows that $|D-g_4^1|$ is also base point free and very
special. Since $r\geq 4$, $|D-g_4^1|$ is not a pencil and, according
to Proposition \ref{-g41}, we obtain 
$$\dim |D-2g_4^1|=\dim |D|-4$$ and $|D-2g_4^1|$ is base point free. It
is easy to see that $|D-2g_4^1|$ is very special of index
$k=\ind(D)$. If $\dim |D-2g_4^1|\geq 4$ 
then repeating the same process we obtain finally a base point free
very special linear system of dimension $\leq 3$ and index $k\geq 1$,
impossible by the Theorems \ref{nonsimple}, \ref{dim2} and Proposition
\ref{dim3}. Since the index of the base point free part is greater or
equal 
than the index of a very special linear system (Lemma
\ref{basepointfreepart}), it follows that the index of any very
special linear system is null. 

Let $|D|$ be a base point free very special linear system of degree
$d$ such that $|D|$ is not a pencil. We have 
$$\dim |D|=r=\frac{1}{2}(d-\delta(D))+1.$$
By Lemma \ref{dim3deltag41=2}, $r\geq 4$
and it follows from
Proposition \ref{-g41} that $$\dim |D-g_4^1|=r-2$$ and $|D-g_4^1|$
is base point free. We can compare $\delta(D)$ and $\delta(D-g_4^1)$,
we have 3 possibilities.\\

$\bullet$ Case $\delta(D-g_4^1)=\delta(D)+2$: We have 
$$\dim |D-g_4^1|=\frac{1}{2}(deg(D-g_4^1)-\delta(D-g_4^1))+2$$
and $|D-g_4^1|$ is very special of index 1, impossible by an above
conclusion.\\

$\bullet$ Case $\delta(D-g_4^1)=\delta(D)$: We have 
$$\dim |D-g_4^1|=\frac{1}{2}(deg(D-g_4^1)-\delta(D-g_4^1))+1$$
and then $|D-g_4^1|$ is a base point free very special linear system.
Remark that $\delta(D)=\delta(D-g_4^1)<s$ since $\delta(g_4^1)=2$.
If $|D-g_4^1|$ is not a pencil the we may repeat the same process
since $\delta(D-2g_4^1)=\delta(D)=\delta(D-g_4^1)$, and we finally
get a very special pencil (we use Theorem \ref{dim2} and Lemma
\ref{dim3deltag41=2} to exclude the case the linear system we
obtain has dimension 2 or 3) with $\delta$
invariant $< s$, impossible by Theorem \ref{nonsimple}.\\

We have proved that 
$$\delta(D-g_4^1)=\delta(D)-2$$ and we recall that $|D-g_4^1|$ is base
point free by Proposition \ref{-g41}. We also remark that $\dim
|D-g_4^1|\geq 2$. Since $|D-g_4^1|$ is base
point free we have 
$$r-6\leq\dim |D-2g_4^1|\leq r-3.$$

Suppose $\dim |D-2g_4^1|=\dim |D|-6\geq 0$. Then 
$$\dim |K-(D-g_4^1)+g_4^1|=\dim |K-(D-2g_4^1)=\dim |K-(D-g_4^1)|$$ 
by Riemann-Roch,
impossible since $h^0(K-(D-g_4^1))=h^1(D-g_4^1)>0$. Thus 
$\dim |K-(D-g_4^1)+g_4^1|\geq \dim |K-(D-g_4^1)|+1$.\\

Suppose $\dim |D-2g_4^1|=\dim |D|-3\geq 0$. Then 
$$\dim |(D-2g_4^1)+g_4^1|=\dim |D-2g_4^1|+\dim |g_4^1|.$$
By Lemma \ref{dim3deltag41=2}, we get $\dim |D-2g_4^1|>0$. Let $|E|$
(resp. $F$) denote
the base point free part (resp. the base part) of $|D-2g_4^1|$ then 
$$\dim|E|+\dim |g_4^1|=\dim |D-2g_4^1|+\dim |g_4^1|=\dim
|(D-2g_4^1)+g_4^1|\geq \dim |E+g_4^1|\geq \dim|E|+\dim |g_4^1|$$ 
i.e. $$\dim |E+g_4^1|=\dim|E|+\dim |g_4^1|.$$
By Lemma \ref{Eisenbud}, $|E|=(r-3).g_4^1$. It follows that
$|D|=|(r-1).g_4^1+F|$ i.e. $|D|$ is a very special linear system of
type 3; Proposition \ref{type3} gives a contradiction since $r\geq
4$.\\

Suppose $\dim |D-2g_4^1|=\dim |D|-5\geq 0$. By Riemann-Roch 
$$\dim |K-(D-g_4^1)+g_4^1|=\dim |K-(D-g_4^1)|+\dim |g_4^1|.$$
Assume $\dim |K-(D-g_4^1)|>0$ and denote by $|E'|$
(resp. $F'$) 
the base point free part (resp. the base part) of $|K-(D-g_4^1)|$.
We have $$\dim|E'|+\dim |g_4^1|=\dim |K-(D-g_4^1)|+\dim |g_4^1|$$ $$=\dim
|K-(D-g_4^1)+g_4^1|\geq \dim |E'+g_4^1|\geq \dim|E'|+\dim |g_4^1|$$ 
i.e. $$\dim |E'+g_4^1|=\dim|E'|+\dim |g_4^1|.$$
By Riemann-Roch, $\dim |E'|=\dim
|K-(D-g_4^1)|=r-2-(d-4)+g-1=r-d+g+1$. 
By Lemma \ref{Eisenbud}, $|E'|=|(r-d+g+1).g_4^1|$. Hence
$|K-(D-g_4^1)|=|K-D+g_4^1|=|(r-d+g+1).g_4^1|+F'$ and then
$|K-D|=|(r-d+g).g_4^1+F'|$. It follows that $\dim |K-D|\geq r-d+g$,
impossible by Riemann-Roch. Hence $\dim |K-(D-g_4^1)|=0$ i.e. 
$\dim |K-D+g_4^1|=0$, again impossible since $D$ is special.\\

According to above, we have $$\dim |D-2g_4^1|=\dim |D|-4.$$ Since $r\geq 4$ then
$h^0(D-2g_4^1)>0$ and $D-2g_4^1$ is special. Since
$\delta(D)=\delta(D-2g_4^1)$ it is easy to see that $|D-2g_4^1|$ is
very special. Let $|D_1|$ denote the base point free part of
$|D-2g_4^1|$. By Lemma \ref{basepointfreepart} and an above remark,
$|D_1|$ is very special of index null. If $\dim |D-2g_4^1|\leq 3$ then 
$|D_1|$ must be a very special pencil $g_s^1$ and $|D-2g_4^1|=|D_1|$
since a very special pencil is primitive.
If $\dim |D-2g_4^1|\geq 4$, we do the same process with $|D_1|$
replacing $|D|$. 
We have proved the existence of a very special pencil on $X$, the rest of
the proof follows now from Proposition \ref{pinceaudeltag41=2}.
\end{proof}

In the following we describe all the very special linear sytems on a
real curve with four real connected components (see Theorem
\ref{specialimplpinceau}). 
\begin{thm} (Very special linear systems on a 4-gonal curve with
  $\delta (g_4^1)=4$)
\label{deltag41=4}\\
Let $X$ be a real curve with a fixed $g_4^1$ with $\delta (g_4^1)=4$
and such that $X_{\CC}$ is 4-gonal. Let $|D|$ be a very special linear
system of degree $d$ and dimension $r$ on $X$. Then $|D|$
is primitive, $r$ is odd, $\ind (D)=0$, $\delta(D)=s=4$.
Moreover, we are in one of the following cases:\\
$\bullet$ $|D|$ is simple and $d\leq g-1$: then $r=3$,
$|D|=|g_4^1 +h_4^1|$ 
with
$h_4^1$ another pencil such that $\delta(h_4^1)=0$,
$\varphi_{|D|}(X)$ is a smooth curve of bidegree $(4,4)$ on a quadric surface
$Q$ of $\PP^3$.\\
$\bullet$ $|D|$ is simple and $d\geq g$: then $|D|=|K-h_4^1|$ with 
$h_4^1$ a very special pencil.\\
$\bullet$ $|D|$ is a very special pencil $h_4^1$.\\
$\bullet$ $|D|$ is non simple and is not a pencil:
then $X$ is a bi-elliptic curve and $$|D|=|\frac{r-1}{2}g_4^1+h_4^1|$$
with $h_4^1$ a pencil such that $\delta(h_4^1)=4$ (i.e. very
special) if $r=1\mod\,4$ and $\delta(h_4^1)=0$ if $r=3\mod\,4$.
\end{thm}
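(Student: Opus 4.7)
The plan is to apply Theorem \ref{s=4} immediately to obtain that $s=4$, $|D|$ is primitive, $\ind(D)=0$, $\delta(D)=4$, and hence $d = 2r+2$. Theorem \ref{specialimplpinceau} (or equivalently the remark after Proposition \ref{M-2} that very special pencils are separating morphisms) shows $a(X)=0$; since $X$ is then separating, the standard relation $s \equiv g+1 \pmod 2$ forces $g$ odd. The pencil case $r=1$ is immediate from Proposition \ref{veryspecialpencil}: $|D|$ is a very special $h_4^1$.

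Assume $|D|$ is simple. If $d \leq g-1$, Castelnuovo's bound $r \leq (d+1)/3$ combined with $r=(d-2)/2$ forces $d \leq 8$; since simple linear systems have $r \geq 2$ (a pencil is not simple), Theorem \ref{dim2} then forces $r = 3$ and $d = 8$. Castelnuovo's genus bound $g \leq \pi(8,3)=9$ together with $g \geq d+1 = 9$ gives $g = 9$, so $\varphi_{|D|}(X)$ is a smooth extremal space curve lying on a unique quadric by \cite[p.~118]{ACGH}, and Proposition \ref{quadrique} yields $|D|=|g_4^1+h_4^1|$ with $\delta(h_4^1)=0$ and the bidegree-$(4,4)$ smooth model on $Q \cong \PP_{\RR}^1\times\PP_{\RR}^1$. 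If instead $d \geq g$, Proposition \ref{criteresimple} yields $\dim|K-D| \leq \delta(D)-2 = 2$; Theorem \ref{dim2} rules out $\dim|K-D|=2$, and $D$ being special forces $\dim|K-D|\geq 1$, so $|K-D|=h_4^1$ is a very special pencil and $|D|=|K-h_4^1|$. Here $r = g - 4$ is odd since $g$ is odd.

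For $|D|$ non-simple and not a pencil, Theorem \ref{nonsimple} produces a degree-$2$ cover $\varphi = \varphi_{|D|}: X \to X'$ onto an M-curve $X'$ with $s' = s/2 = 2$ real components; Harnack then gives $g' = s' - 1 = 1$, so $X$ is bi-elliptic. From Riemann-Roch on the elliptic curve, $|D'|$ is the complete $g_{r+1}^r$ attached to a fixed divisor class of degree $r+1$. To obtain $|D|=|\tfrac{r-1}{2}g_4^1+h_4^1|$ one needs the fixed $g_4^1$ to be induced by $\varphi$; this is established by a Castelnuovo-Severi argument showing that non-induced $g_4^1$'s would force $g \leq 5$, with the small-$g$ cases handled directly. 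Writing $g_4^1 = \varphi^* h_2^1$ for a pencil $h_2^1$ on $X'$, one decomposes $D' \sim \tfrac{r-1}{2} h_2^1 + E$ via the group law on the elliptic curve (with $|E|$ a pencil by Riemann-Roch); pulling back yields $|D| = |\tfrac{r-1}{2} g_4^1 + h_4^1|$ with $h_4^1 = \varphi^*|E|$, and the appearance of $(r-1)/2$ forces $r$ odd.

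The last step is to compute $\delta(h_4^1)$ modulo $4$. Reducing the relation $D \sim \tfrac{r-1}{2}g_4^1 + h_4^1$ modulo $2$ on each real component $C$ and using $\delta(D)=\delta(g_4^1)=4$ gives $\deg h_4^1|_C \equiv 1 - \tfrac{r-1}{2} \pmod 2$ on every $C$. If $r \equiv 1 \pmod 4$ then $\tfrac{r-1}{2}$ is even, so $\deg h_4^1|_C$ is odd for all four components and $\delta(h_4^1)=4$; if $r \equiv 3 \pmod 4$ then $\tfrac{r-1}{2}$ is odd, so $\deg h_4^1|_C$ is even everywhere and $\delta(h_4^1)=0$. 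The hardest step will be the non-simple non-pencil case: verifying that the fixed $g_4^1$ is really induced from the elliptic cover, since the iterative strategy of Proposition \ref{-g41} used in the proofs of Theorems \ref{deltag41=0} and \ref{deltag41=2} fails here (subtracting a $g_4^1$ with $\delta=4$ sends the $\delta$ invariant to $0$), so iteration must be replaced by a direct pull-back decomposition on the elliptic quotient.
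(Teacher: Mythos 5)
Your proposal is correct and follows essentially the same route as the paper: Theorem \ref{s=4} for the structural claims ($s=4$, primitivity, $\ind(D)=0$, $\delta(D)=4$), the same four-way case division, Castelnuovo's bound plus Theorem \ref{dim2} to pin down $r=3$, $d=8$ in the simple low-degree case, Proposition \ref{criteresimple} with Lemma \ref{residuel} in the high-degree case, and Theorem \ref{nonsimple} to reduce the non-simple case to a bi-elliptic cover. The only divergences are in sub-steps: where you obtain the quadric from extremality and then invoke Proposition \ref{quadrique} for the decomposition, the paper first applies Proposition \ref{-g41} to produce $h_4^1=|D-g_4^1|$ with $\delta(h_4^1)=0$ and reads off the bidegree $(4,4)$ directly; and in the non-simple case the paper simply cites \cite[Thm. 4.1]{Mo2}, Proposition \ref{-g41} and \cite[Example 1.13]{CM}, whereas you carry out the Castelnuovo--Severi exclusion of a non-induced $g_4^1$, the group-law decomposition on the elliptic quotient, and the parity computation for $\delta(h_4^1)$ explicitly --- a more self-contained, and correct, treatment of that step.
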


\begin{proof}
Let $|D|$ be a very special linear system of degree $d$ and dimension
$r$. By Theorem \ref{s=4}, $|D|$ is
primitive, $\ind (D)=0$ and $\delta(D)=s=4$.
We have $$\dim |D|=r=\frac{1}{2}(d-4)+1=\frac{d}{2}-1.$$

Assume $|D|$ is simple and $d\leq g-1$. 
By Castelnuovo's bound $$r\leq \frac{1}{3}(d+1)$$ and we get $d\leq 8$
and $r\leq 3$. By Theorem \ref{dim2} and since $|D|$ is simple we
get $r=3$ and $d=8$. By Proposition \ref{-g41}, $|D-g_4^1|$ is a base
point free pencil $h_4^1$ with $\delta(h_4^1)=0$. It follows that 
$\varphi_{|D|}(X)$ is a curve of bidegree $(4,4)$ on a quadric surface
$Q$ of $\PP^3$. By Proposition \ref{quadrique}, $\varphi_{|D|}(X)$ is
smooth i.e. $|D|$ is very ample. By \cite[p. 118]{ACGH}, the quadric
containing $\varphi_{|D|}(X)$ is unique.

Assume $|D|$ is simple and $d\geq g$. By Proposition
\ref{criteresimple}, Lemma \ref{residuel} and Theorem \ref{dim2},
$|K-D|$ is a very special pencil $h_4^1$ i.e. $|D|$ is of type 2 for
that $h_4^1$.

Assume $|D|$ is non simple and is not a pencil.
By Theorem
\ref{nonsimple} $r$ is odd, $\varphi_{|D|}:X\rightarrow X'$ has degree two and
$X'$ is an elliptic curve with two real connected components 
and the inverse image of
any connected component of $X'(\RR)$ is a disjoint union of two
connected components of $X(\RR)$. From Theorem \cite[Thm. 4.1]{Mo2}
and using Proposition \ref{-g41} and \cite[Example 1.13]{CM}, we see
that $$|D|=|\frac{r-1}{2}g_4^1+h_4^1|$$
with $h_4^1$ a pencil such that $\delta(h_4^1)=4$ (i.e. very
special) if $r=1\mod\,4$ and $\delta(h_4^1)=0$ if $r=3\mod\,4$.
\end{proof}

From \cite[Prop. 2.10]{Mo1}, Theorems \ref{trigonal},
\ref{deltag41=0}, \ref{deltag41=2} and \ref{deltag41=4}, we get
Theorem \ref{introthm2} stated in the introduction.
\begin{thm} 
\label{introthm2new}
Let $X$ be real curve such that $X$ and $X_{\CC}$ are both $n$-gonal
with $2\leq n\leq 4$. If $X$ has a
very special linear system then 
$X$ has a very special pencil and $X$ is a separating curve
i.e. $a(X)=0$.
Moreover, if $|D|$ is a very special linear system then $\ind(D)=0$,
$\delta(D)=s$ and $|D|$ is primitive.
\end{thm}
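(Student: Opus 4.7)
The plan is a straightforward case split on the gonality $n\in\{2,3,4\}$, assembling the structure theorems already proved in the preceding sections. In every case the key output is the existence of a very special pencil, and $a(X)=0$ then follows automatically from the remark after Proposition \ref{M-2} (a very special pencil is a separating morphism $X\to\PP^1$).

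For $n=2$, the curve is hyperelliptic and \cite[Prop. 2.10]{Mo1} yields $\delta(g_2^1)=2$, so the $g_2^1$ is the required very special pencil and $s=2$. Any very special $|D|$ on a hyperelliptic curve is of the form $rg_2^1$, whence $\ind(D)=0$, $\delta(D)=s$, and primitivity follow from Proposition \ref{veryspecialpencil} applied to the pencil $g_2^1$ together with Corollary \ref{primitivehull}. For $n=3$, Theorem \ref{trigonal} directly asserts that every very special linear system is either the very special $g_3^1$ or its residual $|K-g_3^1|$, with $s=3$, $a(X)=0$, $\delta(D)=3=s$, and $\ind(D)=0$; primitivity comes from Proposition \ref{veryspecialpencil} for the $g_3^1$ (base point free together with $|K-g_3^1|$ base point free) and symmetrically via Lemma \ref{residuel} for its residual.

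For $n=4$ we pick a base point free real $g_4^1$ on $X$, which exists by hypothesis. Since $\delta(g_4^1)$ has the same parity as $\deg(g_4^1)=4$ and is bounded by $4$, we have $\delta(g_4^1)\in\{0,2,4\}$, so one of Theorems \ref{deltag41=0}, \ref{deltag41=2}, or \ref{deltag41=4} applies. Each of these already exhibits a very special pencil: when $\delta(g_4^1)=0$ as the factor $g_s^1$ in the decomposition $|D|=|\frac{r-1}{2}g_4^1+g_s^1|$; when $\delta(g_4^1)=2$ as $|D|$ itself, which is forced to be a pencil (with $s=g-1$); and when $\delta(g_4^1)=4$ either as $|D|$, as $|K-D|$, or as an $h_4^1$ appearing in the structural description. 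Each theorem also records $\ind(D)=0$ and $\delta(D)=s$, and primitivity is automatic for a very special pencil by Proposition \ref{veryspecialpencil} and explicit (or immediate from Lemma \ref{residuel}) in the remaining cases.

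The bulk of the argument is therefore absorbed in the preceding sections, and the main obstacle is not conceptual but one of bookkeeping: one must check that every sub-case of the $n=4$ classification yields all three properties $\ind(D)=0$, $\delta(D)=s$, and primitivity of $|D|$. The subtlest point is the case $\delta(g_4^1)=2$, where Theorem \ref{deltag41=2} asserts only that $|D|$ is a pencil; but this upgrades at once, via Proposition \ref{veryspecialpencil}, to the full conclusion, so no additional argument is needed.
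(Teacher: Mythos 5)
Your proposal is correct and follows essentially the same route as the paper, which simply assembles Theorem \ref{introthm2new} from \cite[Prop. 2.10]{Mo1} (the hyperelliptic case), Theorem \ref{trigonal} (the trigonal case), and Theorems \ref{deltag41=0}, \ref{deltag41=2}, \ref{deltag41=4} (the three parity cases of $\delta(g_4^1)$), with $a(X)=0$ coming from the existence of a very special pencil. Your extra bookkeeping (upgrading the $\delta(g_4^1)=2$ case via Proposition \ref{veryspecialpencil}) is exactly the implicit content of the paper's one-line derivation.
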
 

\section{Clifford type inequality for very special linear systems}

Using the results of the previous sections, we will improve the
inequalities of Theorem \ref{cliffreelA} and Theorem \ref{cliffreelB}.

\begin{thm}
\label{cliffreelC}
Let $X$ be a real curve such that $X$ is not hyperelliptic and $X$ is
not trigonal, i.e. the real gonality of $X$ is $\geq 4$. Let $D$ be a
very special divisor of degree $d$ and index $k$ then 
$$\dim |D|\leq \frac{d}{2}-\frac{s}{4}.$$
\end{thm}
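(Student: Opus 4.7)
The plan is to reformulate the inequality as $\delta(D)\geq \frac{s}{2}+2k+2$ and argue by cases. First, by Lemma \ref{basepointfreepart} and Corollary \ref{primitivehull}, the bound for the primitive hull of the base-point-free part of $|D|$ implies the bound for $|D|$ itself (passing to that hull only weakens the inequality), so one may assume $|D|$ is primitive. Propositions \ref{s=2} and \ref{s=3} moreover imply that, under the standing hypotheses (not hyperelliptic, not trigonal), the existence of any very special linear system forces $s\geq 4$.

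If $|D|$ is a pencil, Proposition \ref{veryspecialpencil} yields $d=\delta(D)=s$ and $k=0$, whence the bound reduces to $1\leq s/4$, which holds. If $|D|$ is not a pencil, then $\dim|D|\geq 3$ by Theorem \ref{dim2}, and Proposition \ref{condition2} provides the baseline $d+\delta(D)\geq 2s+2k+4$. When $|D|$ is non-simple, Theorem \ref{nonsimple} yields $\delta(D)=s$ and $k=0$; when $|D|$ is simple with $\dim|D|=3$, Proposition \ref{dim3} likewise gives $\delta(D)=s$ and $k=0$. In both of these subcases the target reduces to $s\geq 4$, which has already been established.

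The main obstacle is a simple, very special $|D|$ of dimension $r\geq 4$. By Lemma \ref{residuel} one may assume $d\leq g-1$. I plan to combine Castelnuovo's inequality $r\leq (d+1)/3$, valid for simple linear systems on non-hyperelliptic curves (as already used in Proposition \ref{s=3}), with the identity $d=2r-2k-2+\delta(D)$ to obtain $\delta(D)\geq r+2k+1$. This settles the desired bound as soon as $r\geq \frac{s}{2}+1$. For the residual range $r\leq \frac{s}{2}$ the argument is more delicate: here I would iterate the reduction of Proposition \ref{construction} to produce a very special $|D'|$ with $\delta(D')=s$ and $\ind(D')=0$, then use Theorem \ref{dim2} and Proposition \ref{dim3} to exclude low-dimensional intermediate systems and extract from the resulting structure a very special pencil $g_s^1$, forcing the gonality of $X$ to coincide with $s$ and closing the argument. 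The hard part will be controlling the bookkeeping so that the simultaneous increase of $k+\beta(D)/2$ during the reduction does not absorb the gain in $\delta(D)$ that one needs to reach the sharper bound.
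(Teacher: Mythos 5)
Your overall strategy matches the paper's (pencil case via Proposition \ref{veryspecialpencil} together with $s\geq 4$; non-simple case via Theorem \ref{nonsimple}; simple case via Castelnuovo combined with Proposition \ref{condition2}), and the reductions you make (to the primitive hull of the base-point-free part, to $d\leq g-1$, to $s\geq 4$) are all valid. The genuine gap is the ``residual range'' $4\leq r\leq \frac{s}{2}$: there you only announce a plan --- iterate Proposition \ref{construction}, exclude low-dimensional intermediate systems, extract a very special pencil, control the bookkeeping --- and you yourself flag that the bookkeeping is unresolved. As written, that case is not proved, so the argument is incomplete.

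The irony is that this case needs no new idea: the ``baseline'' you already invoked closes it. Writing the target as $\delta(D)\geq \frac{s}{2}+2k+2$ and substituting $d=2r+\delta(D)-2k-2$ into Proposition \ref{condition2} gives $\delta(D)\geq s+2k+3-r$, which meets the target precisely when $r\leq \frac{s}{2}+1$; Castelnuovo's bound $d\geq 3r-1$ gives $\delta(D)\geq r+2k+1$, which meets it precisely when $r\geq \frac{s}{2}+1$. The two ranges overlap and exhaust all $r$, so no iteration of Proposition \ref{construction} is needed. This is exactly how the paper argues, packaged as a single contradiction: it first proves the intermediate inequality $r\leq \frac{1}{2}(d-\beta(D))-k-1$ by assuming its failure, deducing $r\geq \frac{s}{2}+\frac{3}{2}$ and $2r\geq d-\frac{s}{2}+\frac{1}{2}$ from Proposition \ref{condition2}, and adding these to get $3r\geq d+2$, against Castelnuovo; it then observes that $r+\bigl(\frac{1}{2}(d-\beta(D))-k-1\bigr)=2\bigl(\frac{d}{2}-\frac{s}{4}\bigr)$, so the intermediate inequality is equivalent to the stated one. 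If you replace your sketched iteration by the two-line computation above, your proof is complete and essentially identical to the paper's.
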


\begin{proof}
Let $|D|$ be a very special linear system of degree $d$, index $k$ and
dimension $r$.
Before proving the inequality stated in the Theorem, we will prove the
following inequality $$\dim |D|\leq \frac{1}{2}(d-\beta(D))-k-1.$$

Assume $|D|$ is a pencil. By Proposition \ref{veryspecialpencil}, we have   
$\delta(D)=s$, $d=s$ and $k=0$. 
According to Propositions \ref{s=2} and \ref{s=3}, we have $s\geq 4$
and thus $$\dim |D|=1\leq
\frac{1}{2}(d-\beta(D))-k-1=\frac{1}{2}s-1.$$
In the following of the proof, we assume $|D|$ is not a pencil.
By Lemma \cite[Lem. 2.5]{Mo1} and Lemma \ref{residuel}, we may assume
$|D|$ is base point free and $d\leq g-1$.

We have
\begin{equation}
\label{equ1}
r=\frac{1}{2} (d-\delta (D))+k+1
\end{equation}
and suppose 
\begin{equation}
\label{equ2}
r>\frac{1}{2} (d-\beta (D))-k-1
\end{equation}
By Proposition \ref{condition2}, we get
\begin{equation}
\label{equ3}
d+\delta(D)\geq 2s+2k+4
\end{equation}
From (\ref{equ2}), (\ref{equ3}) and since $\beta(D)=s-\delta(D)$, we
get 
\begin{equation}
\label{equ4}
r\geq \frac{s}{2}+\frac{3}{2}
\end{equation}
By (\ref{equ1}) and (\ref{equ2}), we obtain 
\begin{equation}
\label{equ5}
2r\geq d-\frac{s}{2}+\frac{1}{2}
\end{equation}
Using (\ref{equ4}) and (\ref{equ5}), it follows that
\begin{equation}
\label{equ6}
3r\geq d+2
\end{equation}
If $|D|$ is simple, there is a contradiction with Castelnuovo's bound 
$3r\leq d+1$.

Therefore $|D|$ is non simple and we know that $\delta(D)=s$ and $k=0$ in
that case by Theorem \ref{nonsimple}. From By (\ref{equ1}) and
(\ref{equ2}), we get
$$\frac{1}{2} (d-s)+1> \frac{1}{2}d-1$$
i.e. $$s\leq 3.$$
The case $s=1$ is not possible by Theorem \ref{Huisman}. 
The case $s=2$ (resp $s=3$) is impossible by Proposition \ref{s=2}
(resp. \ref{s=3}) and given the hypotheses.

Set $A=\frac{1}{2}(d-\beta(D))-k-1$ and
$B=\frac{d}{2}-\frac{s}{4}$. Then $$r+A=2B.$$
Therefore, since we have proved that $r\leq A$ then 
$$r\leq B\leq A$$ and the proof is done.
\end{proof}

We are interested by the case when we have an equality in the
inequality given in the previous theorem.

We introduce a new invariant of very special linear systems.
\begin{defn}
Let $D$ be a very special divisor. The rational number $l\in\QQ$ with
$2l\in\ZZ$ such that 
$$\dim |D|=\frac{1}{2}(\deg(D)-\beta(D))-l$$ is called the coindex of
$D$ (or $|D|$) and is denoted by $\coind(D)$.
\end{defn}

\begin{lem} \cite[Lem. 3.6]{Mo1}
\label{residuel2}
Let $D$ be a very special divisor then 
$$\coind(D)=\coind(K-D).$$
\end{lem}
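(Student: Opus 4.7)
The plan is to derive the equality directly from Riemann--Roch, using the fact that $\beta$ is invariant under passage to the residual divisor.

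First I would note that $\beta(D)=\beta(K-D)$: this is because $\delta(D)=\delta(K-D)$ (recalled just before Lemma \ref{residuel}) and $\beta(\cdot)=s-\delta(\cdot)$. Second, by Lemma \ref{residuel}, $K-D$ is itself very special, so its coindex is well defined.

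Next I would apply Riemann--Roch to $D$. Writing $d=\deg(D)$ and using $h^{0}(K-D)=\dim|K-D|+1$, we have
$$\dim|D|-\dim|K-D|=d-g+1.$$
Substituting the definition $\dim|D|=\tfrac{1}{2}(d-\beta(D))-\coind(D)$ on the left and isolating $\dim|K-D|$ yields
$$\dim|K-D|=\tfrac{1}{2}(d-\beta(D))-\coind(D)-(d-g+1)=\tfrac{1}{2}\bigl(2g-2-d-\beta(D)\bigr)-\coind(D).$$
Since $\deg(K-D)=2g-2-d$ and $\beta(K-D)=\beta(D)$, the right-hand side equals $\tfrac{1}{2}(\deg(K-D)-\beta(K-D))-\coind(D)$. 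Comparing with the defining equation $\dim|K-D|=\tfrac{1}{2}(\deg(K-D)-\beta(K-D))-\coind(K-D)$ gives $\coind(K-D)=\coind(D)$.

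There is no real obstacle here; the only subtlety is recording that $\beta$ is genuinely a function of the linear system (invariance of $\delta$ under linear equivalence was noted in the introduction), so that $\beta(K-D)$ is well defined, and that the half-integrality of $\coind$ is automatic because $d-\beta(D)$ and $2g-2-d-\beta(D)$ have the same parity. The argument parallels the proof of Lemma \ref{residuel} for the index.
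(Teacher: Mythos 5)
Your argument is correct: the paper does not reprove this lemma (it is quoted from \cite[Lem. 3.6]{Mo1}), and the intended proof is exactly the Riemann--Roch computation you give, combined with $\delta(K-D)=\delta(D)$ (hence $\beta(K-D)=\beta(D)$) and Lemma \ref{residuel} to ensure $\coind(K-D)$ is defined. Nothing is missing.
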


We reformulate Theorems \ref{cliffreelA} and \ref{siegalite} using the
notion of coindex.
\begin{thm} \cite[Thm. 3.8, Thm. 3.18]{Mo1}
\label{coind1}
Let $D$ be a very special divisor then
$$\coind(D)\geq \ind(D).$$
If there is an equality in the previous inequality then $X$ is
hyperelliptic with a very special $g_2^1$ and $|D|=r.g_2^1$ with
$r=\dim |D|$ odd.
\end{thm}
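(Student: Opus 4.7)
The plan is to recognize this theorem as an algebraic translation of Theorem~\ref{cliffreelB} (together with Theorem~\ref{siegalite} for the equality case), so no fresh argument is needed.

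First I would equate the two expressions for $r=\dim|D|$. Writing $d=\deg(D)$ and $k=\ind(D)$, the definitions of index and coindex give
$$r \;=\; \tfrac{1}{2}(d-\delta(D)) + k + 1 \;=\; \tfrac{1}{2}(d-\beta(D)) - \coind(D).$$
Using the identity $\beta(D)+\delta(D)=s$ and solving yields
$$\coind(D) - \ind(D) \;=\; \delta(D) - \tfrac{s}{2} - 2k - 1.$$
Consequently $\coind(D)\geq \ind(D)$ is equivalent to $\delta(D) \geq \tfrac{s}{2}+2k+1$, and after substitution back into the first expression for $r$, this is in turn equivalent to
$$\dim|D| \;\leq\; \tfrac{1}{2}\bigl(d - \tfrac{s-2}{2}\bigr),$$
which is precisely the bound (*) of Theorem~\ref{cliffreelB}. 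This establishes the first assertion.

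For the equality case, the same chain of equivalences shows that $\coind(D)=\ind(D)$ holds if and only if equality holds in (*). Theorem~\ref{siegalite} then provides the desired characterization: $X$ must be hyperelliptic with $\delta(g_2^1)=2$ (so the $g_2^1$ is very special) and $|D|=r\cdot g_2^1$ with $r=\dim|D|$ odd.

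The only obstacle in this plan is clerical: one must keep in mind that $\coind$ is defined in terms of $\beta(D)$ while $\ind$ is defined in terms of $\delta(D)$, and use $\beta(D)+\delta(D)=s$ carefully when combining them. Once that bookkeeping is correct, the statement reduces formally to Theorems~\ref{cliffreelB} and~\ref{siegalite} already cited in the introduction.
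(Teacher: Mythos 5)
Your proposal is correct and takes the same route as the paper: the paper gives no independent proof of this statement, presenting it exactly as an algebraic reformulation of the cited results (the bound $(*)$ of Theorem~\ref{cliffreelB} and the equality case of Theorem~\ref{siegalite}), and your identity $\coind(D)-\ind(D)=\delta(D)-\tfrac{s}{2}-2k-1$ is the correct bookkeeping that makes the equivalence precise.
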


We give a consequence of the proof of Theorem \ref{cliffreelC}.
\begin{cor}
\label{coind2}
Let $X$ be a real curve such that $X$ is not hyperelliptic and $X$ is
not trigonal, i.e. the real gonality of $X$ is $\geq 4$. Let $D$ be a
very special divisor then 
$$\coind(D)\geq \ind(D)+1.$$
\end{cor}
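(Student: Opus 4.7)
The plan is to observe that Corollary \ref{coind2} is essentially a reformulation of the sharper intermediate inequality already established inside the proof of Theorem \ref{cliffreelC}. Recall that the coindex is defined by the identity
$$\dim |D| = \tfrac{1}{2}(\deg(D)-\beta(D)) - \coind(D),$$
so the desired inequality $\coind(D) \geq \ind(D)+1$ is equivalent to
$$\dim |D| \leq \tfrac{1}{2}(d-\beta(D)) - k - 1,$$
where $d=\deg(D)$ and $k=\ind(D)$.

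First I would set $k=\ind(D)$ and $r=\dim|D|$, and rewrite the target in the form displayed above. Then I would simply invoke the bound $r \leq \frac{1}{2}(d-\beta(D))-k-1$, which is exactly what the first (and main) part of the proof of Theorem \ref{cliffreelC} establishes under the hypothesis that the real gonality of $X$ is at least $4$. That earlier argument handles three cases: the pencil case (via Proposition \ref{veryspecialpencil} together with Propositions \ref{s=2}--\ref{s=3}, which force $s\geq 4$), the simple non-pencil case (via Castelnuovo's bound combined with Proposition \ref{condition2}), and the non-simple non-pencil case (via Theorem \ref{nonsimple} to get $\delta(D)=s$ and $k=0$, then a direct computation ruling out $s\leq 3$ using the gonality hypothesis).

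The proof then consists of translating this inequality to the coindex formulation: subtracting $r$ from both sides of $r \leq \frac{1}{2}(d-\beta(D))-k-1$ and then adding $r$ to both sides gives $\coind(D)=\frac{1}{2}(d-\beta(D))-r \geq k+1 = \ind(D)+1$, as required.

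There is no genuine obstacle here, since all the hard work was done in establishing Theorem \ref{cliffreelC}; the only thing to be careful about is that the inequality in question is precisely the one proved, not the weaker bound $\dim|D|\leq \frac{d}{2}-\frac{s}{4}$ obtained from it at the end. I would emphasize this distinction explicitly so the reader sees that no new argument is needed, merely an application of the intermediate bound together with the definition of $\coind(D)$.
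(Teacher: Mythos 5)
Your proposal is correct and matches the paper exactly: the paper introduces Corollary \ref{coind2} with the phrase ``We give a consequence of the proof of Theorem \ref{cliffreelC}'', and the intended argument is precisely to invoke the intermediate bound $\dim|D|\leq \frac{1}{2}(d-\beta(D))-k-1$ established in the first part of that proof and translate it via the definition of the coindex. Your explicit remark that one must use this intermediate inequality rather than the weaker final bound $\dim|D|\leq \frac{d}{2}-\frac{s}{4}$ is exactly the right point of care.
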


\begin{lem} 
\label{basepointfreepart2}
Let $D$ be an effective divisor. Let $F$ be the base divisor of $|D|$.
If $D$ is very special then 
the base point free part $|E|=|D-F|$ of $|D|$
is also very special and 
$$\coind(E)\leq \coind(D).$$
Moreover $\coind(E)=\coind(D-F)= \coind(D)$ if and only if
$F=\sum_i P_i$ with
the $P_i$ some real points among the $\beta (D)$ real connected
components on which the degree of the restriction of $D$ is even, such
that no two of them belong to the same real connected component.
\end{lem}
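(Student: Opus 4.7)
The plan is to mirror the proof of Lemma \ref{basepointfreepart}, but tracking the coindex instead of the index. The very-specialness of $|E|$ has already been established there, so the only remaining tasks are the inequality $\coind(E)\leq\coind(D)$ and the characterization of equality. I will exploit the defining relation $\dim|\,\cdot\,|=\frac{1}{2}(\deg-\beta)-\coind$, so that the change in $\coind$ under the removal of a single base point is completely determined by the simultaneous changes in degree and in $\beta$ (the dimension being unchanged throughout, since we are removing base points).

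I will peel off the support of $F$ one closed point at a time and split into three cases. (a) A non-real closed base point $Q+\bar Q\leq F$: the degree drops by $2$, and $\beta$ is unchanged since non-real points do not affect restrictions to the real components; hence $\coind$ drops by $1$. (b) A real base point $P\leq F$ lying on a real component where the current divisor has odd restricted degree: degree drops by $1$ and the parity on that component flips, so $\beta$ increases by $1$, and again $\coind$ drops by $1$. (c) A real base point $P\leq F$ lying on a real component where the current divisor has even restricted degree: degree drops by $1$ and $\beta$ drops by $1$, so $\coind$ is preserved.

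Iterating these local steps from $D$ down to $E$ immediately yields $\coind(E)\leq\coind(D)$. For the equality statement, the observation is that equality forces every step to be of type (c); but once such a type (c) move is performed on a real component $C$, the restricted degree of the new divisor on $C$ becomes odd, so $C$ can no longer host another type (c) move. This forces $F=\sum_i P_i$ to be a sum of distinct real points, pairwise in different real connected components, each chosen among the $\beta(D)$ components on which $D$ itself has even restricted degree, exactly as claimed. The only delicate point, and the main place to be careful, is keeping precise track of how each removal simultaneously updates $\beta$ and reshuffles the partition of real components into ``even'' and ``odd'' ones with respect to the running divisor; this bookkeeping is what produces the non-repetition condition on the $P_i$.
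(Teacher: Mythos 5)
Your proposal is correct and follows essentially the same route as the paper: peel off the base divisor one closed point at a time and track how $\deg$ and $\beta$ change, so that $\coind$ drops by $1$ for a non-real point or a real point on an odd component and is preserved exactly for a real point on an even component, which also yields the equality characterization. The only cosmetic difference is that the paper lists ``two real points on the same component'' as a separate case, whereas you recover it by iterating the single-point moves.
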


\begin{proof}
Set $d=\deg (D)$ and $l=\coind (D)$.

Assume a non-real point $Q+\bar{Q}$ is contained in the base divisor
of $|D|$. Then $\dim |D|=\dim
|D-Q-\bar{Q}|=\frac{1}{2}(d-\beta(D))-l=\frac{1}{2}((d-2)-\beta(D))-(l-1)$ 
and $\coind(D-Q-\bar{Q})=\coind (D)-1$. 

Assume two real points $P,P'$ belonging to the same real connected
component, are contained in the base divisor
of $|D|$, then as before, $\coind(D-P-P')=\coind (D)-1$.

Assume a real point $P$ belonging to a connected component on which
the degree of the restriction of $D$ is even, is a base point of
$|D|$. 
Then $\dim |D|=\dim
|D-P|=\frac{1}{2}(d-\beta(D))-l=\frac{1}{2}((d-1)-(\beta(D)-1))-l
=\frac{1}{2}(\deg(D-P)-\beta(D-P))-l$ and $\coind(D-P)=\coind (D)$.

Assume a real point $P$ belonging to a connected component on which
the degree of the restriction of $D$ is odd, is a base point
of $|D|$. Then $\dim |D|=\dim
|D-P|=\frac{1}{2}(d-\beta(D))-l=\frac{1}{2}((d-1)-(\beta(D)+1))-l+1
=\frac{1}{2}(\deg(D-P)-\beta(D-P))-(l-1)$ and $\coind(D-P)=\coind (D)-1$.
\end{proof}

\begin{lem} (\cite[Lem. 3.1]{ELMS})
\label{accola}
Let $D$ and $E$ be divisors of degree $d$ and $e$ on a curve $X$ of
genus $g$ and suppose that $|E|$ is base point free. Then 
$$h^0 (D)-h^0 (D-E)\leq \frac{e}{2}$$
if $2D-E$ is special.
\end{lem}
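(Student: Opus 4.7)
My strategy is to reduce the inequality via Riemann-Roch to a statement about the divisor $K-D$, and then to exploit the speciality of $2D-E$ to produce an effective divisor $F$ against which multiplication by its section yields the required bound. The case $E=0$ is trivial, so by the base-point-freeness of $|E|$ I may assume $\dim|E|\geq 1$.

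Subtracting the Riemann-Roch formulas for $D$ and $D-E$ gives the identity
$$[h^0(D)-h^0(D-E)] + [h^0(K-D+E)-h^0(K-D)] = e,$$
so it is enough to show $h^0(D)-h^0(D-E)\leq h^0(K-D+E)-h^0(K-D)$; the desired $h^0(D)-h^0(D-E)\leq e/2$ then follows by halving. To this end, the speciality of $2D-E$ gives $h^0(K-2D+E)>0$, so I fix an effective $F\in |K-2D+E|$, noting $F+D\sim K-D+E$ and $F+D-E\sim K-D$. Using base-point-freeness, I choose $E_0\in |E|$ with $\operatorname{supp}(E_0)\cap\operatorname{supp}(F)=\emptyset$, and let $\sigma\in H^0(\O(F))$ be a section with divisor $F$. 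Multiplication by $\sigma$ gives an injection $H^0(D)\hookrightarrow H^0(D+F)$ which restricts to an injection $H^0(D-E_0)\hookrightarrow H^0(D+F-E_0)$, and the induced map on quotients
$$H^0(D)/H^0(D-E_0)\longrightarrow H^0(D+F)/H^0(D+F-E_0)$$
is injective because $\sigma$ is nowhere zero on $E_0$: if $\sigma s$ vanishes on $E_0$, then $s$ itself must vanish on $E_0$. Comparing dimensions via the linear equivalences above gives $h^0(D)-h^0(D-E)\leq h^0(K-D+E)-h^0(K-D)$, and combining with the Riemann-Roch identity yields $2[h^0(D)-h^0(D-E)]\leq e$.

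The main delicate point is the injectivity on the quotients, which genuinely requires the disjointness of $\operatorname{supp}(E_0)$ and $\operatorname{supp}(F)$; without it, a zero of $\sigma$ at a point of $E_0$ could absorb the vanishing condition and break the implication. The base-point-freeness of $|E|$ is precisely what permits this disjoint choice: for any prescribed finite set of points, a generic member of $|E|$ avoids all of them.
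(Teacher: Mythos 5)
The paper offers no proof of this lemma; it is quoted verbatim from \cite[Lem.\ 3.1]{ELMS}. Your argument is correct and is essentially the standard proof given there: reduce via Riemann--Roch to comparing $h^0(D)-h^0(D-E)$ with $h^0(K-D+E)-h^0(K-D)$, use speciality of $2D-E$ to write $K-D+E\sim D+F$ with $F$ effective, and use base-point-freeness to pick $E_0\in|E|$ with support disjoint from $F$ so that multiplication by the section cutting out $F$ is injective on the quotients. Your closing remark correctly isolates the one delicate point (the disjointness needed for injectivity on quotients), so nothing is missing.
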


The previous lemma applies in case $D$ is semi-canonical i.e. $2D=K$.

\begin{lem} (\cite{Ac}, \cite{Co-Ma} p. 200 and \cite{ACGH} p. 122)
\label{extremale} 
Let $X$ be an extremal curve (it means the genus of the curve is
maximal i.e. the genus equals the Castelnuovo's bound) 
of degree $d>2r$ in $\PP_{\RR}^r$ $(r\geq
3)$. Then one of the followings holds:
\begin{description}
\item[({\cal{i}})] $X$ lies on a rational normal scroll $Y$ in
  $\PP_{\RR}^r$ ($Y$ is real, see \cite{ACGH} p. 120). Write
  $d=m(r-1)+1+\varepsilon$ where
  $m=[\frac{d-1}{r-1} ]$ and
  $\varepsilon\in \{0,1,2,\ldots ,r-2\}$. The curve $X_{\CC}$ has only
  finitely many
  base point free pencils of degree $m+1$ (in fact, only $1$ for
  $r>3$, and $1$ or $2$ if $r=3$). These pencils are swept out by the
  rulings of $Y_{\CC}$. Moreover $X_{\CC}$ has no $g_m^1$.
\item[({\cal{ii}})] $X$ is the image of a smooth plane curve $X'$ of
  degree $\frac{d}{2}$ under the Veronese map
  $\PP_{\RR}^2\rightarrow\PP_{\RR}^5$.
\end{description}
\end{lem}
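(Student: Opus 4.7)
The plan is to follow the classical Castelnuovo theory of extremal curves in projective space, as developed in \cite[Ch. III]{ACGH}, and then descend to $\RR$ at the end.

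The starting point is the uniform position lemma together with Castelnuovo's lemma. For an irreducible non-degenerate curve $X\subset\PP^r$ of degree $d>2r$, a general hyperplane section $\Gamma=X\cap H$ consists of $d$ points in uniform position in $H\cong\PP^{r-1}$, and the assumption $d>2r\geq r+3$ (for $r\geq 3$) together with the extremality of the genus forces $\Gamma$ to lie on a unique rational normal curve of degree $r-1$ in $H$. Varying $H$, these rational normal curves sweep out an irreducible surface $Y\subset\PP^r$ of degree $r-1$ that contains $X$. Uniqueness of the rational normal curve through $\Gamma$ rules out monodromy and, through the intrinsic characterisation of $Y$, shows that $Y$ is defined over $\RR$ as soon as $X$ is.

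Next, I would invoke the del Pezzo--Bertini classification of non-degenerate irreducible surfaces of minimal degree $r-1$ in $\PP^r$: each such surface is either a rational normal scroll or, only when $r=5$, the Veronese surface $\nu_2(\PP_{\RR}^2)$. This immediately gives the dichotomy (i)/(ii). In case (ii), $X=\nu_2(X')$ for some $X'\subset\PP_{\RR}^2$; the degree computation $\deg(X)=2\deg(X')$ yields $\deg(X')=d/2$, and smoothness of $X$ forces smoothness of $X'$.

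In case (i), the rulings of $Y_{\CC}$ cut out on $X_{\CC}$ a base point free pencil whose degree is $m+1$, as one sees by intersecting $X$ with a general ruling and using the normalisation $d=m(r-1)+1+\varepsilon$. The uniqueness statement reduces to the fact that a rational normal scroll of dimension $2$ in $\PP^r$ has a unique one-parameter family of rulings when $r>3$, while a smooth quadric in $\PP^3$ carries two families; any hypothetical third pencil of degree $m+1$ would, by the same construction applied to that pencil, produce a second minimal surface containing $X$, and by the del Pezzo--Bertini classification this surface would have to coincide with $Y$, so the pencil would already be cut out by a ruling. Finally, the non-existence of a $g_m^1$ on $X_{\CC}$ I would prove by contradiction: repeating the Castelnuovo/scroll construction with a pencil of smaller degree $m$ would exhibit $X$ on an irreducible surface of degree $<r-1$ in $\PP^r$, violating non-degeneracy.

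The main obstacle is the uniqueness/finiteness claim for the pencil of degree $m+1$ and the parallel non-existence of a $g_m^1$: both rest on the delicate interplay between the pencil geometry and the minimal surface it sweeps out, which is where the sharp form of the Castelnuovo bound is used. A secondary but recurring point is to track the real structure at each step, verifying that $Y$, its rulings, and the induced pencils are all Galois-stable; this follows from their intrinsic characterisation but must be carried through carefully when several rulings are present (the case $r=3$).
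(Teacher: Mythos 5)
The paper offers no proof of this lemma: it is quoted from the literature (\cite{Ac}, \cite{Co-Ma} and \cite{ACGH} pp.~117--123), so you are reconstructing a classical argument rather than matching one in the text. Your skeleton for the dichotomy is the standard and correct one: for $d>2r$, i.e.\ $d\geq 2(r-1)+3$, uniform position plus Castelnuovo's lemma and the extremality of the genus put a general hyperplane section on a unique rational normal curve of $\PP^{r-1}$; these curves sweep out an irreducible non-degenerate surface $Y$ of minimal degree $r-1$ containing $X$; del Pezzo--Bertini gives scroll or (for $r=5$) Veronese; and the uniqueness of $Y$ (e.g.\ as the intersection of the quadrics through $X$) makes it Galois-stable, hence defined over $\RR$. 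The degree count for the pencil cut by a ruling is also fine.

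The genuine gap is in your argument for the finiteness of the base point free $g_{m+1}^1$'s and the non-existence of a $g_m^1$. You propose that a pencil of degree $m+1$ not cut by the rulings would, ``by the same construction,'' yield a second surface of minimal degree containing $X$, and that a $g_m^1$ would yield a surface of degree $<r-1$, contradicting non-degeneracy. But the surface one builds from a pencil is the union of the linear spans $\overline{E}$ of its divisors $E$, and this union is a surface of degree $r-1$ only if each $\overline{E}$ is a \emph{line}, i.e.\ only if the $m+1$ (resp.\ $m$) points of $E$ are collinear --- which is exactly the conclusion you are trying to establish (that the pencil is cut by the rulings of $Y$). For an arbitrary pencil the divisors need not be collinear; $\overline{E}$ can have dimension up to $m-1$, the union of spans is then a higher-dimensional scroll, del Pezzo--Bertini does not apply, and no contradiction with non-degeneracy arises in the $g_m^1$ case. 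So the argument is circular where it is not vacuous. The actual proofs --- Accola's Castelnuovo--Severi-type counting in \cite{Ac}, or the analysis in \cite{ACGH} pp.~122--123 of which divisor classes on the scroll can contain an extremal curve and of the linear series the rulings induce --- are precisely what is needed to force the divisors of any $g_{m+1}^1$ onto the rulings and to exclude a $g_m^1$. You correctly flag this as the delicate point, but the mechanism you offer to handle it does not work, so these two assertions remain unproved in your write-up.
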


\begin{prop}
\label{egalitecliffordreelC}
Let $X$ be a real curve such that $X$ is not hyperelliptic and $X$ is
not trigonal, i.e. the real gonality of $X$ is $\geq 4$. Let $D$ be a
very special divisor of degree $d$ and index $k$ such that
$$\dim |D|= \frac{d}{2}-\frac{s}{4}$$ or equivalently such that 
$$\coind(D)=\ind(D)+1=k+1.$$
Then $X_{\CC}$ is 4-gonal and
$X$ has a very special pencil $g_4^1$. Moreover $s=4$, $a(X)=0$,
$\delta(D)=s$, $k=0$ and $|D|$ is one of the linear systems listed in
Theorem \ref{deltag41=4}.
\end{prop}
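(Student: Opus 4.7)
The plan is to combine the equality constraint with Proposition~\ref{condition2} and Castelnuovo's bound to pin down the numerics, then reduce to the classification on $4$-gonal curves proved in the previous section. Writing $r=\dim|D|$ and combining $r=\frac{1}{2}(d-\delta(D))+k+1$ with $r=\frac{d}{2}-\frac{s}{4}$ yields
$$\delta(D)=\tfrac{s}{2}+2k+2,\quad \beta(D)=\tfrac{s}{2}-2k-2,\quad d=2r+\tfrac{s}{2},$$
in particular $s\geq 4k+4$, so showing $k=0$ is equivalent to showing $s=4$.

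If $|D|$ is a pencil, Proposition~\ref{veryspecialpencil} forces $d=s=\delta(D)$ and $k=0$, and the equality $1=\frac{s}{4}$ immediately gives $s=4$; then $|D|=g_4^1$ is a very special pencil, $X_{\CC}$ is $4$-gonal, and Theorem~\ref{deltag41=4} supplies the listed conclusions. So assume $|D|$ is not a pencil. By Lemmas~\ref{residuel} and~\ref{residuel2} I may replace $D$ by $K-D$ and assume $d\leq g-1$. Proposition~\ref{condition2} then gives $d+\delta(D)\geq 2s+2k+4$, which together with the numerics above yields $r\geq\frac{s}{2}+1$.

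Next I split into simple and non-simple. When $|D|$ is simple, Castelnuovo's bound $r\leq\frac{d+1}{3}$ combined with $d=2r+\frac{s}{2}$ forces $r=\frac{s}{2}+1$, $d=3r-1$ and equality in Castelnuovo, so $\varphi_{|D|}(X)$ is an extremal curve in $\PP_{\RR}^{r}$. Apply Lemma~\ref{extremale}: the scroll invariant is $m=[(3r-2)/(r-1)]=3$, so case~(i) produces a pencil of degree $m+1=4$ on $X_{\CC}$ via the rulings of the scroll, unique (hence real) for $r\geq 4$; the sub-case $r=3$ corresponds to $s=4$ and is handled directly by Proposition~\ref{quadrique}. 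The Veronese sub-case~(ii) would force $D$ to be linearly equivalent to $2H$ on a smooth real plane septic, whence $\delta(D)=0$, contradicting $\delta(D)=\frac{s}{2}+2k+2>0$. When $|D|$ is non-simple, Theorem~\ref{nonsimple} gives $\delta(D)=s$ and $k=0$ outright, so $s=4$ by the numerics; the associated double cover $\varphi_{|D|}\colon X\to X'$ then has $X'$ an M-curve with $s'=2$ and $g'=1$, and pulling back a pencil of degree~$2$ on the elliptic curve $X'$ produces a $g_4^1$ on $X$.

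In every branch $X$ carries a real $g_4^1$, so the complex gonality of $X$ is $\leq 4$; since $X_{\CC}$ is neither hyperelliptic nor trigonal (by the hypothesis, together with uniqueness of $g_2^1$ and, in sufficiently large genus $g\geq 5$, of $g_3^1$; small genera are ruled out by Propositions~\ref{M-2} and~\ref{M-3}), the complex gonality equals~$4$. Theorem~\ref{introthm2new} applied to the very special $|D|$ then gives $\ind(D)=0$, $\delta(D)=s$, $|D|$ primitive and $a(X)=0$; matching $\delta(D)=\frac{s}{2}+2k+2$ forces $s=4$, and Theorem~\ref{deltag41=4} enumerates the possibilities for $|D|$. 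The main obstacle is the simple extremal case: producing a real $g_4^1$ out of Lemma~\ref{extremale}(i) (covered by uniqueness for $r\geq 4$ and Proposition~\ref{quadrique} for $r=3$) and eliminating the Veronese sub-case~(ii), which is done by the parity observation that $\delta(2H)=0$ for any divisor that is twice a real class.
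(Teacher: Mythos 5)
Your proof follows the same skeleton as the paper's: derive the numerics from the two expressions for $\dim|D|$, reduce to $d\leq g-1$ via Lemmas \ref{residuel} and \ref{residuel2}, split into pencil / non-simple / simple, use Proposition \ref{condition2} together with Castelnuovo's bound to force $r=\frac{s}{2}+1$, $d=3r-1$ and extremality, run the trichotomy of Lemma \ref{extremale}, and close with the $4$-gonal classification theorems. Two deviations are worth recording. First, your treatment of the Veronese sub-case is genuinely simpler and correct: since $\varphi_{|D|}(X)$ is the Veronese image of a real plane septic, $D$ is the restriction of twice the (real) line class, so $\delta(D)=0$ at once, contradicting $\delta(D)=\frac{s}{2}+2k+2>0$. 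The paper instead invokes semi-canonicality of $D$ (Accola), Lemma \ref{accola} and the Clifford index to reach $|D|=2g_7^2$ before drawing the same conclusion; your shortcut bypasses all of that. Second, you never establish that $|D|$ is base point free, whereas the paper does this first (via Lemma \ref{basepointfreepart}, Lemma \ref{basepointfreepart2} and Corollary \ref{coind2}: equality of index and coindex of $|D|$ with those of its moving part forces the base divisor to vanish). This matters because your Castelnuovo step tacitly applies the bound to a morphism of degree $d=\deg D$, and the simple/non-simple dichotomy is a property of the moving part. The gap is recoverable inside your own argument — Castelnuovo applied to the moving part of degree $d-f$ gives $r\leq\frac{s}{2}+1-f$, which against $r\geq\frac{s}{2}+1$ forces $f=0$, while Theorem \ref{nonsimple} and Proposition \ref{veryspecialpencil} supply base point freeness in the other branches — but it should be said explicitly. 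Finally, the remark that ``showing $k=0$ is equivalent to showing $s=4$'' is not right as stated ($s\geq 4k+4$ gives only one implication); it is harmless since every branch actually derives $\delta(D)=s$ and $k=0$, whence $s=4$ from $\delta(D)=\frac{s}{2}+2k+2$.
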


\begin{proof}
Looking at the proof of Theorem \ref{cliffreelC}, 
we see that the equality $\dim |D|= \frac{d}{2}-\frac{s}{4}$ is equivalent to
the other equality $\coind(D)=\ind(D)+1.$. Thus
$$\dim |D|=r=\frac{1}{2} (d-\delta (D))+k+1=\frac{1}{2} (d-\beta
(D))-k-1.$$ 
We claim $|D|$ is base point free.
By the Lemmas \ref{basepointfreepart}, \ref{basepointfreepart2} and
Corollary \ref{coind2},
if $|E|$ denote the base point free part 
of $|D|$ then we must have $\ind (E)=\ind(D)$ and
$\coind(E)=\coind(D)$ since $\coind(E)\geq\ind(E)+1.$ It follows now from the
lemmas \ref{basepointfreepart} and \ref{basepointfreepart2}
that $|E|=|D|$ since they have the same index and coindex.
By Lemma \ref{residuel2}, $|D|$ is primitive.
By the Lemmas \ref{residuel} and \ref{residuel2}, we may assume $d\leq
g-1$.

We assume first that $|D|$ is non-simple. By Theorem
\ref{nonsimple}, $k=0$ and $\delta(D)=s$. We get
$$\frac{1}{2} (d-s)+1=\frac{d}{2}-\frac{s}{4}$$ i.e.
$$s=4.$$
By Theorem \ref{s=4}, $X$ has a very special $g_4^1$ and $X_{\CC}$ is
4-gonal. We use Theorem \ref{deltag41=4}
to finish the proof in this case.

We assume now that $|D|$ is simple. We have
\begin{equation}
\label{equ7}
r=\frac{1}{2} (d-\delta (D))+k+1
\end{equation}
\begin{equation}
\label{equ8}
r=\frac{1}{2} (d-\beta (D))-k-1
\end{equation}
and 
\begin{equation}
\label{equ9}
r=\frac{d}{2}-\frac{s}{4}
\end{equation}
By Proposition \ref{condition2}, we get
$d+\delta(D)\geq 2s+2k+4$
and we claim that here it is an equality:\\
If $d+\delta(D)\geq 2s+2k+6$ then using (\ref{equ8}) we get 
$$r\geq \frac{s}{2}+2.$$ 
Using now (\ref{equ9}) we have $s=2d-4r$ and replacing in the previous
inequality we get $$3r\geq d+2$$
and this contradicts Castelnuovo's bound.
Therefore, we have
\begin{equation}
\label{equ10}
d+\delta(D)= 2s+2k+4
\end{equation}
From (\ref{equ9}), (\ref{equ10}) and (\ref{equ8}),
it follows that $$r= \frac{s}{2}+1$$ and that
$$3r=d+1$$ i.e. $\varphi_{|D|}(X)$ 
is an extremal curve in the sense of Castelnuovo.
By \cite[Lem. 2.9]{Ac2}, $D$ is
semi-canonical i.e. $|2D|=|K|$. 
We denote by $Y$ the curve $\varphi_{|D|}(X)$.
We have $m=[\frac{d-1}{r-1}]=[\frac{3r-2}{r-1}]=3$ since $r\geq
3$. By Lemma \ref{extremale}, we have to consider the following
cases.\\
\\
{\sc Case 1} $r=5$ and $X$ is a smooth plane curve:\\
By Lemma \ref{extremale}, $Y$ is the image of a smooth plane curve of
degree $7$ under the Veronese embedding $\PP^2\hookrightarrow \PP^5$. The
curve $X$ has a unique very ample $g_7^2$ which calculate the Clifford
index of $X_{\CC}$. Since $D$ is semi-canonical,
by Lemma \ref{accola}, the linear system $|D-g_7^2|$ of degree $7$ 
has dimension $\geq 2$. Since the Clifford index of $X$ is $3$, we
have $\dim |D-g_7^2|=2$. It follows that $|D|=2g_7^2$ and
$\delta(D)=0$, impossible.\\
\\
{\sc Case 2} $r\geq 4$ and $X$ is not a smooth plane curve:\\
By Lemma \ref{extremale}, $X$ has a $g_4^1$ and $X_{\CC}$ is $4$-gonal.
From the Theorem \ref{deltag41=0}, \ref{deltag41=2} and
\ref{deltag41=4}, it follows that $k=0$ and $\delta(D)=s$. 
By (\ref{equ10}) and (\ref{equ7}), $d=s+4$ and $r=3$, impossible.\\
\\
{\sc Case 3} $r=3$:\\
By Proposition \ref{dim3}, $d=s+4$, $k=0$ and 
$\delta (D)=s$. Since in this case the rational scroll is a quadric
surface, the existence of the very special $g_4^1$ follows from
Proposition \ref{quadrique}.
\end{proof}

We summarize the results of this section in the following theorem.
\begin{thm}
\label{resume}
Let $|D|$ be a very special linear system of degree $d$ on a real curve $X$.
\begin{description}
\item[({\cal{i}})] We have $$\dim |D|\leq
  \frac{1}{2}(d-\frac{s-2}{2}),$$
with equality i.e. $$\coind (D)=\ind (D)$$ if and only if $X$ is
hyperelliptic, the $g_2^1$ is very special and $s=2$.
\item[({\cal{ii}})] Assume $X$ is not hyperelliptic. We have $$\dim |D|\leq
  \frac{1}{2}(d-\frac{s-1}{2}),$$
with equality i.e. $$\coind (D)=\ind (D)+\frac{1}{2}$$ if and only if $X$ is
trigonal, a $g_3^1$ is very special and $s=3$.
\item[({\cal{iii}})] Assume $X$ is not hyperelliptic and not
  trigonal. 
  We have $$\dim |D|\leq
  \frac{1}{2}(d-\frac{s}{2}),$$
with equality i.e. $$\coind (D)=\ind (D)+1$$ if and only if $X$ is
4-gonal, a $g_4^1$ is very special and $s=4$.
\item[({\cal{iv}})] Assume $X$ has gonality $\geq 5$. We have $$\dim |D|\leq
  \frac{1}{2}(d-\frac{s+1}{2}).$$
\end{description}
\end{thm}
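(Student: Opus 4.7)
The plan is to reduce everything to a single algebraic identity and then feed it the hierarchy of lower bounds on $\coind(D)-\ind(D)$ already established in this paper. Writing $k=\ind(D)$ and $l=\coind(D)$, the two defining formulas
$$\dim|D|=\frac{1}{2}(d-\delta(D))+k+1\qquad\text{and}\qquad\dim|D|=\frac{1}{2}(d-\beta(D))-l$$
together with $\delta(D)+\beta(D)=s$ yield, after summation,
$$\dim|D|=\frac{1}{2}\left(d-\frac{s}{2}\right)+\frac{1+k-l}{2}.$$
Consequently each of the bounds (i)--(iv) is equivalent to a lower bound $l-k\geq c$ with $c=0,\,1/2,\,1,\,3/2$ respectively, and the whole theorem reduces to verifying these bounds together with the equality cases.

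For (i), $l\geq k$ is Theorem \ref{coind1}; its equality case produces a hyperelliptic $X$ with very special $g_2^1$ and $|D|=r\cdot g_2^1$ for some odd $r$, and a direct evaluation of $\coind(D)=0$ on such a divisor forces $s=2$. For (ii), non-hyperellipticity excludes the equality case of Theorem \ref{coind1}; since $k\in\ZZ$ while $2l\in\ZZ$, the difference $l-k$ lies in $\frac{1}{2}\ZZ$, so any strict inequality is in fact $l\geq k+\frac{1}{2}$. Equality here would give $l<k+1$, forcing by Corollary \ref{coind2} the real gonality to be $\leq 3$; combined with non-hyperellipticity this makes $X$ trigonal, and Theorem \ref{trigonal} then produces the very special $g_3^1$ and yields $s=3$. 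For (iii), Corollary \ref{coind2} gives $l\geq k+1$, with the equality case classified exactly by Proposition \ref{egalitecliffordreelC}. For (iv), Proposition \ref{egalitecliffordreelC} shows that $l=k+1$ forces $X$ to carry a very special $g_4^1$; under the hypothesis that the real gonality is $\geq 5$ this case is excluded, and the same half-integer parity argument sharpens the bound to $l\geq k+\frac{3}{2}$.

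The only delicate step, and in my view the main obstacle, is the half-integer parity argument used in (ii) and (iv): it rests on the elementary but essential observation that $\coind(D)\in\frac{1}{2}\ZZ$ while $\ind(D)\in\ZZ$, so that any excluded equality case in fact improves the bound by exactly $\frac{1}{2}$. Everything else is a direct repackaging of Theorem \ref{coind1}, Corollary \ref{coind2} and Proposition \ref{egalitecliffordreelC} via the summation identity above, which converts each sharp lower bound on $l-k$ into the corresponding sharp upper bound on $\dim|D|$ demanded by (i)--(iv).
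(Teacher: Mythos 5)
Your proof is correct and follows essentially the same route as the paper: the summation identity converting each bound into a lower bound on $\coind(D)-\ind(D)$, combined with Theorem \ref{coind1}, Corollary \ref{coind2}, Proposition \ref{egalitecliffordreelC} and Theorem \ref{trigonal}, is exactly the mechanism the paper relies on (its own proof only writes out the equality case of ({\cal{ii}})). Your explicit half-integrality observation used in ({\cal{ii}}) and ({\cal{iv}}) is the step the paper leaves implicit, and it is valid since $2\coind(D)\in\ZZ$ while $\ind(D)\in\ZZ$.
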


\begin{proof}
The proof of the theorem follows from the results of the paper except maybe
the part concerning equality in ({\cal{ii}}).

If $X$ is trigonal with a very special $g_3^1$ then $s=3$ and 
$\dim g_3^1=\frac{1}{2}(deg(g_3^1)-\frac{s-1}{2})$.

Assume $X$ is not hyperelliptic and suppose there is a very special linear
system $|D|$ of degree $d$ such that $$\dim |D|=
  \frac{1}{2}(d-\frac{s-1}{2}).$$
By Theorems \ref{cliffreelC} and \ref{siegalite}, $X$ is trigonal. By Theorem
\ref{trigonal}, a $g_3^1$ is very special.
\end{proof}

\end{document}